\newtheorem{example}{Example}[section]
\newtheorem{remark}[example]{Remark}
\newtheorem{theorem}[example]{Theorem}
\newtheorem{corollary}[example]{Corollary}
\newtheorem{definition}[example]{Definition}
\newtheorem{proposition}[example]{Proposition}
\newtheorem{lemma}[example]{Lemma}
\newtheorem{OpenProblem}[example]{Open problem}
\renewcommand{\leq}{\leqslant}
\renewcommand{\le}{\leqslant}
\renewcommand{\Im}{\text{Im}}
\def\la{\lambda}
\def\si{\sigma}
\def\bd{\gamma}
\def\ex{\text{ex}}
\def\Id{\text{Id}}
\def\dimKK{k}
\def\U{U}
\def\X{{\mathbb X}}
\def\Y{{\mathbb Y}}
\def\Sym{{\bf Sym}}
\def\QSym{{\it QSym}}
\def\WQSym{{\bf WQSym}}
\def\eval{{\rm eval}}
\def\<{\langle}
\def\>{\rangle}
\def\FF{{\mathcal F}}
\def\C{{\mathbb C}}
\def\N{{\mathbb N}}
\def\SG{{\mathfrak S}}
\def\H{{\sf H}}
\def\A{{\mathbb A}}
\def\KK{{\mathcal K}}
\def\y{{\bf y}}
\def\x{{\bf x}}
\def\bc{{\bf c}}
\def\bv{{\bf v}}
\def\bw{{\bf w}}
\def\pp{{\bm p}}
\def\qq{{\bm q}}
\def\pack{{\rm pack}}
\def\Sol{ \mathcal{S}} 
\def\SolNC{\mathcal{S}_{\text{nc}}} 
\def\IC{\mathcal{IC}} 
\def\MRCoord{\mathcal{MC}} 
\def\Young{\mathcal{Y}} 
\DeclareMathOperator{\ActY}{eval_\Young} 
\def\QLa{Q\Lambda} 
\DeclareMathOperator\Ker{Ker}
\DeclareMathOperator\Ch{Ch}
\newdimen\squaresize
\newdimen\thickness         
\newcommand{\pref}[1]{(\ref{#1})}
\def\ie{{\em i.e. }}
\def\Phixp{\Phi_{x \to p,q}}
\def\Phipx{\Phi_{p,q \to x}}
\def\Phiab{\Phi_{a \to b,d}}
\def\Phiba{\Phi_{b,d \to a}}
\def\square#1{\hbox{\vrule width \thickness
   \vbox to \squaresize{\hrule height \thickness\vss                            
      \hbox to \squaresize{\hss#1\hss}
   \vss\hrule height\thickness} 
\unskip\vrule width \thickness} 
\kern-\thickness}                                                            
\def\vsquare#1{\vbox{\square{$\casestyle#1$}}\kern-\thickness}
\def\young#1{\vcenter{%
    \squaresize=18pt\thickness=0.5pt\let\casestyle=\relax
    \vbox{\smallskip\offinterlineskip
      \halign{&\vsquare{##}\cr #1}}}}
\def\moyyoung#1{\vcenter{%
    \squaresize=28pt\thickness=0.6pt\let\casestyle=\relax
    \vbox{\smallskip\offinterlineskip
      \halign{&\vsquare{##}\cr #1}}}}
\def\bigyoung#1{\vcenter{%
    \squaresize=38pt\thickness=0.8pt\let\casestyle=\displaystyle
    \vbox{\smallskip\offinterlineskip
      \halign{&\vsquare{##}\cr #1}}}}
\def\boxit#1#2{\setbox1=\hbox{\kern#1{#2}\kern#1}%
\dimen1=\ht1 \advance\dimen1 by #1 \dimen2=\dp1 \advance\dimen2 by #1
\setbox1=\hbox{\vrule height\dimen1 depth\dimen2\box1\vrule}%
\setbox1=\vbox{\hrule\box1\hrule}%
\advance\dimen1 by .4pt \ht1=\dimen1
\advance\dimen2 by .4pt \dp1=\dimen2 \box1\relax}
\def\gf#1#2{\genfrac{}{}{0pt}{}{#1}{#2}}
\title[Quasi-symmetric functions as polynomials on Young diagrams]%
{Quasi-symmetric functions\\ as  polynomial functions on Young diagrams}
\author[J.-C. Aval, V. F\'eray, J.-C. Novelli, J.-Y. Thibon]%
{Jean-Christophe Aval, Valentin F\'eray, Jean-Christophe Novelli,\\ and
Jean-Yves Thibon}
\address[Aval]{LaBRI, Universit\'e Bordeaux I \\
351 cours de la lib\'eration \\ 33405 Talence cedex \\
France}
\address[Féray]{Institut für Mathematik, Universität Zürich, Winterthurerstrasse 190,
8057 Zürich, Switzerland
}
\address[Novelli and Thibon]{Institut Gaspard Monge, Universit\'e Paris-Est
Marne-la-Vall\'ee \\
5 Boulevard Descartes \\Champs-sur-Marne \\77454 Marne-la-Vall\'ee cedex 2 \\
France\bigskip}
\email[Jean-Christophe Aval]{aval@labri.fr}
\email[Valentin F\'eray]{valentin.feray@math.uzh.ch}
\email[Jean-Christophe Novelli]{novelli@univ-mlv.fr}
\email[Jean-Yves Thibon]{jyt@univ-mlv.fr}
\date{}
\begin{document}

\begin{abstract}
We determine the most general form of a smooth function on Young diagrams,
that is, a polynomial in the interlacing or multirectangular coordinates whose
value depends only on the shape of the diagram. We prove that the algebra of
such functions is isomorphic to quasi-symmetric functions, and give a
noncommutative analog of this result.
\end{abstract}

\keywords{quasi-symmetric functions, functions on Young diagrams}

\subjclass[2010]{05E05, 05E10}

\maketitle

\section{Introduction}

A central question in this paper is the following problem:
\begin{quote}
Characterize the polynomials $f(x_1,x_2,x_3,\ldots)$ in infinitely many
variables\footnote{Understood as elements of an inverse limit, see
Section~\ref{SectDefNot}.}
such that 
\begin{equation}
\label{eqfoncQS}
f(x_1,x_2,\dots)|_{x_i=x_{i+1}} =
f(x_1,\dots,x_{i-1},x_{i+2},\dots).
\end{equation}
\end{quote}

\subsection{Motivation: Young diagrams and Equation \eqref{eqfoncQS}}
\label{SubsectMotivation}

Consider a Young diagram $\lambda$ drawn with the Russian convention,
(\emph{i.e.}, draw it with the French convention, rotate it counterclockwise
by $45\degree$ and scale it by a factor $\sqrt{2}$).
Its border can be interpreted as the graph of a piecewise affine function.
We denote by $x_1, x_2,\dots,x_{2m+1}$ the abscissas of its local minima and
maxima in decreasing order, see Figure~\ref{FigRussian}.

\begin{figure}[ht]
    \begin{tikzpicture}
      \begin{scope}[draw=black,scale=.5]
          \draw[->,thick] (0,0) -- (6,0);
          \foreach \x in {1, 2, 3, 4, 5}
              { \draw (\x, -2pt) node[anchor=north] {{\tiny{$\x$}}} -- (\x,
2pt); }
          \draw[->,thick] (0,0) -- (0,5);
          \foreach \y in {1, 2, 3, 4}
              { \draw (-2pt,\y) node[anchor=east] {{\tiny{$\y$}}} -- (2pt,\y); }
          \draw[ultra thick,draw=black] (4,0) -- (4,2) -- (2,2) -- (2,3) -- (0,3) ;
          \fill[fill=gray,opacity=0.1] (4,0) -- (4,2) -- (2,2) -- (2,3) -- (0,3) -- (0,0) -- cycle ;
      \end{scope}
\begin{scope}[xshift=7cm, yshift=-0cm, scale=0.5]
       \begin{scope}
          \clip (-4.5,0) rectangle (5.5,5.5);
          \draw[thin, dotted] (-6,0) grid (6,6);
          \begin{scope}[rotate=45,draw=gray,scale=sqrt(2)]
              \clip (0,0) rectangle (4.5,5.5);
              \draw[thin, dotted] (0,0) grid (6,6);
          \end{scope}
      \end{scope}

      \draw[->,thick] (-6,0) -- (6,0) node[anchor=west] {\tiny{$x$}};
      \draw[->,thick] (0,-0.4) -- (0,6);
%
%
\begin{scope}[draw=gray,rotate=45,scale=sqrt(2)]
          \draw[->,thick] (0,0) -- (6,0);
          \draw[->,thick] (0,0) -- (0,5);
          \draw[ultra thick,draw=black] (5.5,0) -- (4,0)  -- (4,2) -- (2,2) -- (2,3) --
          (0,3) -- (0,4.5) ;
          \fill[fill=gray,opacity=0.1] (4,0) -- (4,2) -- (2,2) -- (2,3) -- (0,3) -- (0,0) -- cycle ;
      \end{scope}
 \draw[ultra thick,dotted] (4,4) -- (4,0) node[anchor=north] {\tiny{$x_1=4$}};
 \draw[ultra thick,dotted] (2,6) -- (2,-.8) node[anchor=north] {\tiny{$x_2=2$}};
 \draw[ultra thick,dotted] (0,4) -- (0,0) node[anchor=north] {\tiny{$x_3=0$}};
 \draw[ultra thick,dotted] (-1,5) -- (-1,-.8) node[anchor=north] {\tiny{$x_4=-1$}};
 \draw[ultra thick,dotted] (-3,3) -- (-3,0) node[anchor=north] {\tiny{$x_5=-3$}};
\end{scope}
    \end{tikzpicture}
    \caption{Young diagram $\lambda=(4,4,2)$
    and the graph of the associated function $\omega_\lambda$.
}
    \label{FigRussian}
\end{figure}
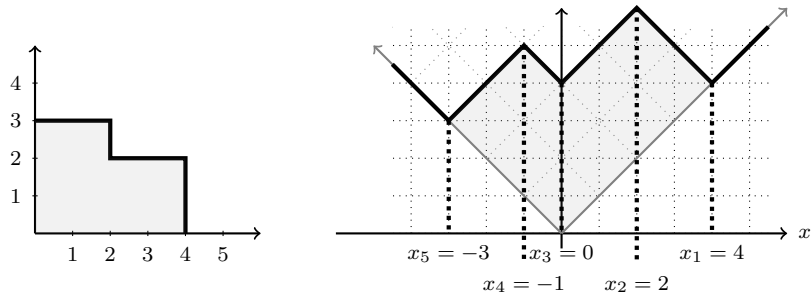

These numbers $x_1,x_2,\cdots,x_{2m+1}$
are called {\em (Kerov) interlacing coordinates}, see, {\it e.g.},
\cite[Section 6 with $\theta=1$]{OlshanskiPlancherelAverage}.
They are usually labeled with two different alphabets for minima
and maxima, but we shall rather use the same alphabet here and distinguish
between odd-indexed and even-indexed variables when necessary.

Note that not any decreasing sequence of integers can be obtained in this
way, as interlacing coordinates always satisfy the relation
$\sum_i (-1)^i x_i=0$.
Yet, this construction defines an injective map
\begin{equation}
\IC: \big\{\text{Young diagrams}\big\} \to \left\{\begin{tabular}{c}
    finite sequences\\
    of integers
\end{tabular}\right\}.
\end{equation}

A polynomial in infinitely many variables can be evaluated on any finite
sequence. By composition with $\IC$, one may wish to interpret it as a function
on all Young diagrams.

A Young diagram can be easily recovered from its Kerov coordinates
$x_1, \dots,x_{2m+1}$.
To obtain its border, first draw the half-line $y=-x$ for $x\leq x_{2m+1}$,
then, without raising the pen, draw line segments of slope alternatively $+1$
and $-1$ between points of $x$-coordinates $x_{2m+1},x_{2m},\dots,x_1$ and
finally a half-line of slope $+1$ for $x \geq x_1$.
Starting with a decreasing integral sequence satisfying $\sum_i (-1)^i x_i=0$,
the last half-line has equation $y=x$ and the resulting broken line  can be
interpreted as the border of a Young diagram drawn with the Russian
convention (see \cite[Proposition 2.4]{IO02}).

Apply now the same process to a non-increasing sequence 
$x_1, x_2,\dots,x_{2m+1}$ such that $x_i=x_{i+1}$.
Reaching the $x$-coordinate $x_i=x_{i+1}$, one has to change twice the sign of
the slope, that is, to do nothing. Hence, one obtains the same diagram as for
sequence
\[x_1,\cdots,x_{i-1},x_{i+2},\cdots,x_{2m+1}.\]
Therefore, if one wants to interpret a polynomial in infinitely many variables
as a function of Young diagrams, it is natural to require that it satisfies
Equation~\eqref{eqfoncQS}.

\subsection{Solution}

In Section \ref{SectSolution}, we shall describe the algebraic structure of 
the space $\Sol$ of solutions of Equation \eqref{eqfoncQS}.

\begin{theorem}
\label{ThmSolEqQSym}
As an algebra, $\Sol$ is isomorphic to $\QSym$, the algebra of {\em
quasi-symmetric functions}.
\end{theorem}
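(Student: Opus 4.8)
The plan is to exhibit an explicit graded algebra isomorphism between $\QSym$ and $\Sol$, matching the monomial basis $(M_\alpha)_\alpha$ of $\QSym$ (indexed by compositions $\alpha$) with a family of explicit solutions of \eqref{eqfoncQS}. First I would record that $\Sol$ is a graded \emph{subalgebra} of the polynomial ring: if $f$ and $g$ satisfy \eqref{eqfoncQS} then so do $f+g$ and $fg$, since on each side the substitution $x_i=x_{i+1}$ followed by deletion of the pair is an algebra homomorphism. It therefore suffices to produce, for every composition, a solution, to see that these solutions are linearly independent and span $\Sol$ degree by degree, and finally to check that the correspondence respects products.

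To build solutions I would start from the signed power sums $p_k=\sum_{i\ge 1}(-1)^{i-1}x_i^k$. These lie in $\Sol$: upon setting $x_i=x_{i+1}$ the two colliding monomials $(-1)^{i-1}x_i^k$ and $(-1)^{i}x_{i+1}^k$ cancel because they carry opposite signs, while deleting the pair $(x_i,x_{i+1})$ shifts all higher indices by two and hence preserves their parities, so the surviving terms reproduce $p_k$ on the shortened alphabet. The $p_k$ generate a copy of the symmetric functions inside $\Sol$, but $\QSym$ is strictly larger (already in degree $3$ there are four compositions but only three partitions), so genuinely new solutions are required, and these must detect the linear order of the $x_i$, equivalently distinguish the two interlacing alphabets of minima and maxima. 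I expect the image of a general $M_\alpha$ to be a \emph{signed monomial quasi-symmetric function} $\sum_{i_1<\dots<i_r}\varepsilon(i_1,\dots,i_r)\,x_{i_1}^{\alpha_1}\cdots x_{i_r}^{\alpha_r}$, with the signs $\varepsilon$ tuned so that the collision terms produced by \eqref{eqfoncQS} telescope in accordance with the quasi-shuffle product of the $M_\alpha$.

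The cleanest route to the spanning statement and to multiplicativity is, I think, to pass to multirectangular coordinates $(\pp,\qq)=(p_1,\dots,p_d;q_1,\dots,q_d)$. Under the standard dictionary expressing the interlacing coordinates as partial sums of the $p_j$ and $q_j$, a collision $x_i=x_{i+1}$ corresponds exactly to the degeneration of one rectangle: setting $p_k=0$ deletes $p_k$ and merges $q_{k-1}+q_k$, while setting $q_k=0$ deletes $q_k$ and merges $p_k+p_{k+1}$. These are precisely the overlapping-merge relations that define the quasi-shuffle product, so in these coordinates $\Sol$ becomes the algebra of polynomial functions of rectangle sequences invariant under the merges, which is a combinatorial incarnation of $\QSym$. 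On this side multiplicativity is automatic, because the identification is realized by an honest substitution of variables rather than being imposed by hand.

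The main obstacle is surjectivity, i.e. the upper bound $\dim\Sol_n\le 2^{n-1}$ matching the number of compositions of $n$. The explicit solutions give the reverse inequality together with linear independence (for instance by a triangularity argument on leading monomials), but to know there are no further solutions I would treat \eqref{eqfoncQS} — or rather its multirectangular form — as a system of linear constraints in each fixed degree and show that merge-invariance cuts the solution space down to dimension exactly $2^{n-1}$. Proving this bound cleanly, equivalently that every polynomial shape function in the multirectangular coordinates is a quasi-shuffle-invariant combination of the explicit generators, is where the real work lies; the resulting degree-by-degree dimension count, combined with the multiplicativity already visible on the multirectangular side, then upgrades the bijection to the claimed algebra isomorphism.
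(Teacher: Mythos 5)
Your skeleton (explicit solutions indexed by compositions, a dimension bound $2^{n-1}$, multiplicativity) is the right one, but two of its three pillars fail as described. First, the solutions you propose do not exist: no choice of signs $\varepsilon(i_1,\dots,i_r)$ makes the pure signed sum $\sum_{i_1<\cdots<i_r}\varepsilon(i_1,\dots,i_r)\,x_{i_1}^{\alpha_1}\cdots x_{i_r}^{\alpha_r}$ satisfy \eqref{eqfoncQS}. Indeed, setting $x_i=x_{i+1}$ in a term whose index set contains the consecutive pair $(i,i+1)$ produces a monomial with the merged exponent $\alpha_t+\alpha_{t+1}$; for a generic composition this exponent pattern occurs nowhere else on either side, so its coefficient $\varepsilon(\dots,i,i+1,\dots)$ must vanish, and combining this with the cancellation conditions $\varepsilon(\dots,i,\dots)+\varepsilon(\dots,i+1,\dots)=0$ forces $\varepsilon\equiv 0$ (already for $\alpha=(k,\ell)$ with $k\neq\ell$). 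The true solutions necessarily carry extra ``diagonal'' terms with merged exponents supported on odd-indexed variables, e.g.
\[
M_{(k,\ell)}(\X)\;=\;\sum_{i} x_{2i+1}^{k+\ell}\;+\;\sum_{i<j}(-1)^{i+j}x_i^k x_j^\ell,
\]
and it is exactly the Hopf-algebraic evaluation on the virtual alphabet $\X=\ominus(x_1)\oplus(x_2)\ominus(x_3)\oplus\cdots$ (coproduct plus antipode) that generates these terms systematically and, at the same time, gives multiplicativity of $F\mapsto F(\X)$ for free. Since you never actually construct your family, both the lower bound on $\dim\Sol_n$ and the product rule are missing.

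Second, the multirectangular reduction on which you rest spanning and multiplicativity is not faithful, and would prove a false statement. The substitution expressing the $x_i$ as partial sums of the $p_j,q_j$ lands in the hyperplane $\sum_i(-1)^i x_i=0$ (there are $2m+1$ interlacing coordinates but only $2m$ multirectangular ones), so the induced map from $\Sol$ to merge-invariant polynomials in $(\pp,\qq)$ kills the principal ideal generated by the degree-one solution $\sum_i(-1)^{i+1}x_i=M_1(\X)$; in particular the degree-one solution itself is lost. The paper shows (Lemma \ref{LemKerChgtCoord}, Corollaries \ref{CorolDimQLa} and \ref{CorolSolpEqQLa}) that the merge-invariant algebra $\Sol'$ is isomorphic to $\QSym/\langle M_1\rangle$, whose degree-$n$ component has dimension $2^{n-2}$ for $n\ge 2$ and $0$ for $n=1$ --- not to $\QSym$. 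So ``$\Sol$ becomes the algebra of merge-invariant polynomial functions of rectangle sequences'' is precisely what is \emph{not} true; $\Sol'$ is a proper quotient of $\Sol$. Finally, the one step you rightly flag as ``where the real work lies,'' the upper bound $\dim\Sol_n\le 2^{n-1}$, is left unproven, and your multirectangular detour cannot supply it because of the kernel issue above. The paper proves it directly in the $x$-variables by an elementary induction on the statistic $\ell(X^{\bv})=\sum_i i\,v_i$: for a non-packed monomial ($v_i=0$, $v_{i+1}\neq 0$), the collision $x_i=x_{i+1}$ expresses its coefficient in terms of coefficients of monomials with smaller $\ell$, so every solution is determined by its coefficients on packed monomials, of which there are $2^{n-1}$ in degree $n$.
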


The algebra of quasi-symmetric functions is a natural extension of that of
symmetric functions, widely studied in the literature.
Its definition is recalled in Section \ref{SectDefNot}.
The isomorphism of Theorem \ref{ThmSolEqQSym} is naturally described in terms
of Hopf algebra calculus: the solutions to Equation
\eqref{eqfoncQS} are the quasi-symmetric functions evaluated on the virtual
alphabet (this notion is defined in Section~\ref{SubsectQSymAlph})
\begin{equation}
\label{EqVirtualAlphabet}
   \X =\ominus (x_1) \oplus (x_2) \ominus (x_3) \oplus (x_4) \cdots 
\end{equation} 
We emphasize here the similarity with a result of J.R. Stembridge
\cite{StembridgeSuperSym}.
He studied the solutions of Equation~\eqref{eqfoncQS} which are in addition
symmetric in the odd-indexed variables $x_1,x_3,\dots$ and separately in the
even-indexed variables $x_2,x_4,\dots$.
He proved that the space of these functions is algebraically generated
by the power sums in the virtual alphabet above, that is
\begin{equation}\label{EqPowerSum}
    p_k(\X) = \sum_i (-1)^i x_i^k.
\end{equation}
In other words, the {\em symmetric} solutions to \eqref{eqfoncQS}
are the symmetric functions evaluated on $\X$.
From this point of view, our result is the natural quasi-symmetric analogue
of Stembridge's theorem.

\subsection{Back to Young diagrams}

As explained in Section~\ref{SubsectMotivation}, the solutions of
\eqref{eqfoncQS} can be interpreted as functions on Young diagrams.

It turns out that symmetric polynomials evaluated on $\X$ (which form a subset
of $\Sol$) correspond to a well-known algebra of functions on Young diagrams,
denoted here by $\Lambda$, and referred to as {\em symmetric functions on
Young diagrams}%
\footnote{The usual terminology is {\em polynomial functions on Young
diagrams}, which can be confusing as some functions that do not belong to this
algebra depend polynomially on interlacing coordinates $x_1,x_2,\ldots$.\label{footnote:symmetric}}.
This algebra, introduced by Kerov and Olshanski \cite{KO94}, is algebraically
generated by the family of functions~\eqref{EqPowerSum}
\cite[Corollary 2.8]{IO02}.
Therefore, Equation \eqref{eqfoncQS} leads us to a new algebra of functions on
Young diagrams, strictly larger than that of Kerov and Olshanski, see Section
\ref{SubsectActY} for details.

Our algebra $\Sol$ has a rich structure (quasi-symmetric functions), and it
contains some non-{\em symmetric} functions, denoted by $N_G$, which have
played an important role in the approach to representation theory of the
symmetric group developed in some recent papers (see, {\em e.g.}, \cite{DFS10}
and references therein).
In Section \ref{SubsectNG}, we give a new formula for $N_G$ in terms of
quasi-symmetric functions of $\X$.
\medskip

Besides, in \cref{SubsectMultirec,SubsectLinkTwoSystems},
we describe our new algebra $\Sol$ in terms 
of the so-called multirectangular coordinates of Young diagrams.
We show that
the algebra $\Sol$ is also the set of polynomials in multirectangular coordinates,
which can be interpreted as function of Young diagrams 
({\em i.e.} which takes the same values on different sets of multirectangular coordinates
of a Young diagram).
Interestingly enough, looking at this multirectangular coordinates yields
a two-alphabet version of a basis of $\QSym$ introduced
by Malvenuto and Reutenauer \cite{MR}, whose product is given by the shuffle
operation on parts of the composition -- see \cref{SubsectH}.
\medskip

Other sets of coordinates of Young diagrams have turned out to be useful,
in particular in the context of Kerov and Olshanski algebra of symmetric
functions on Young diagrams:
the row coordinates,
the (modified) Frobenius coordinates and the multiset of contents of boxes.
It would perhaps be fruitful to investigate our new algebra in terms of these sets
of parameters.
We discuss this as a direction for future research in Section \ref{SectFuture}.
\subsection{Generalization to a noncommutative framework}

To avoid confusion, we will always use a set of variables $\{a_1,a_2,\dots\}$ 
for polynomials in noncommuting variables.
A functional equation, analogous to \eqref{eqfoncQS}, can be considered:
\begin{quote}
Characterize the polynomials $P(a_1,a_2,a_3,\ldots)$ in infinitely many
{\em non commuting} variables such that 
\begin{equation}
\label{eqfoncWQS}
P(a_1,a_2,\dots)|_{a_i=a_{i+1}} =
P(a_1,\dots,a_{i-1},a_{i+2},\dots).
\end{equation}
\end{quote}

We solve this problem in Section \ref{SectNonCommutative}.
\begin{theorem}
    As an algebra, the space of solutions of \eqref{eqfoncWQS}
    is isomorphic to the algebra of 
    {\em word quasi-symmetric functions}  $\WQSym$.
    \label{ThmSolEqWQSym}
\end{theorem}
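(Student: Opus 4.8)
The plan is to construct an explicit algebra isomorphism from $\WQSym$ onto $\SolNC$ by evaluating word quasi-symmetric functions on the noncommutative virtual alphabet
\begin{equation*}
\A = \ominus(a_1) \oplus (a_2) \ominus (a_3) \oplus (a_4) \cdots,
\end{equation*}
which is the exact counterpart of the alphabet $\X$ used in Theorem~\ref{ThmSolEqQSym}. First I would recall the concrete realization of $\WQSym$ as the subalgebra of noncommutative series in $a_1,a_2,\dots$ spanned by the monomial functions $\M_u=\sum_{\pack(w)=u}w$, the sum running over all words $w$ whose packing equals the packed word $u$, together with the Hopf structure for which evaluation on an ordered sum of alphabets is controlled by the coproduct. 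Interpreting the signs through the antipode, exactly as $\ominus$ is handled commutatively, turns $F\mapsto F(\A)$ into a well-defined algebra morphism into the inverse limit of Section~\ref{SectDefNot}. The only technical point here is to make the infinite signed alphabet $\A$ and this evaluation rigorous in the inverse-limit topology, which runs parallel to the commutative treatment.

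Second, I would check that $F(\A)$ solves \eqref{eqfoncWQS} for every $F\in\WQSym$; this is the conceptual core and it is forced by the alphabet itself rather than by $F$. In $\A$ the letters at positions $i$ and $i+1$ always carry opposite signs, so specializing $a_i=a_{i+1}=a$ places $\oplus(a)$ next to $\ominus(a)$, and such an opposite-sign pair of equal letters annihilates in the formal difference of alphabets. Consequently $\A|_{a_i=a_{i+1}}$ equals the alphabet $\A'$ obtained by deleting positions $i$ and $i+1$, and since the evaluation of a word quasi-symmetric function commutes with this alphabet specialization we get $F(\A)|_{a_i=a_{i+1}}=F(\A')$, which is precisely the right-hand side of \eqref{eqfoncWQS}. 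This is the faithful noncommutative translation of the geometric fact that a local minimum meeting a local maximum leaves the diagram unchanged.

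Third comes the isomorphism. The map is an algebra morphism by the previous paragraph, so it remains to pin down its image and kernel by a graded-dimension count: $\WQSym$ is graded by word length with $\dim\WQSym_n$ equal to the number of packed words of size $n$ (the $n$-th ordered Bell number). For the lower bound on $\dim(\SolNC)_n$ I would prove injectivity of $F\mapsto F(\A)$ by a triangularity argument, selecting in each $\M_u(\A)$ a distinguished dominant word with respect to a suitable term order and checking that distinct $u$ produce distinct dominant words. For the matching upper bound I would use \eqref{eqfoncWQS} directly to show that a solution of degree $\le n$ is completely determined by, and may be prescribed freely on, a set of ``reduced'' words indexed by the packed words of size $n$. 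Injectivity together with the coincidence of graded dimensions then forces the morphism to be an isomorphism of graded algebras $\WQSym\cong\SolNC$.

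I expect the upper bound in the last step to be the main obstacle. Commutatively one may lean on the structure theory of $\QSym$, but in the noncommutative setting one must keep track of the full pattern of each word and not merely its content, so showing that \eqref{eqfoncWQS} confines a solution to the image of $\WQSym(\A)$ demands a careful triangularity statement: the right partial order on packed words, and a precise description of the dominant term of each $\M_u(\A)$ together with the signs it carries. A lesser but real difficulty, already flagged above, is giving the signed infinite alphabet $\A$ and its evaluation a rigorous meaning in the inverse-limit framework, though this should follow the commutative model closely.
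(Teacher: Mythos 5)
Your overall architecture (define an evaluation $F\mapsto F(\A)$, show each $F(\A)$ solves \eqref{eqfoncWQS}, then match an upper bound on the dimension of the solution space in each degree against the ordered Bell numbers via coefficients on packed words) is the same as the paper's, and your dimension count in the third step is essentially the paper's argument. But the foundation of your first two steps has a genuine gap: you propose to define the evaluation on $\A$ by ``interpreting the signs through the antipode, exactly as $\ominus$ is handled commutatively,'' and you dismiss this as a technical point that ``runs parallel to the commutative treatment.'' It does not. The paper states explicitly that differences of alphabets for $\WQSym$ \emph{cannot} be defined by means of the antipode, because the antipode of $\WQSym$ is not involutive; this is flagged in the introduction as the extra difficulty of the noncommutative case. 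The deeper obstruction is that in noncommuting variables the evaluation on an ordinal sum does not factor through the coproduct as ordered products of polynomials: for instance
\begin{equation*}
P_{21}(A\oplus B)\;=\;P_{21}(A)\;+\;P_1(B)\,P_1(A)\;+\;P_{21}(B),
\end{equation*}
where the cross term has the $B$-letters \emph{before} the $A$-letters, so no identity of the form $F(A\oplus B)=\sum_k F_k(A)\,G_k(B)$ holds. Consequently the commutative recipe $P(X\ominus Y)=\sum_k F_k(X)\,S(G_k)(Y)$ has no direct noncommutative analogue, and your step two --- where the cancellation $\ominus(a)\oplus(a)=\emptyset$ is ``forced by the alphabet itself'' via Hopf axioms --- rests on a definition that is not available.

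The paper circumvents this with an ad-hoc construction: for a \emph{nondecreasing} packed word $u$, it defines $P_u(\A)$ as the noncommutative polynomial obtained from $M_{\eval(u)}(\X)$ by writing every monomial with letters in nondecreasing order, and for general $u$ it lets the permutation $\sigma_u$ act on the monomials of $P_{u^\uparrow}(\A)$. The solution property is then proved without any alphabet calculus: for nondecreasing $u$, all monomials of $P_u(\A)$ have nondecreasing indices, so the substitution $a_{i+1}=a_i$ (equating \emph{consecutive} variables) reproduces exactly the cancellations of the commutative image $M_{\eval(u)}(\X)$, which is already known to solve \eqref{eqfoncQS}; the case of general $u$ follows because the symmetric group action preserves the solution property. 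Note also that in the paper's route the multiplicativity of $F\mapsto F(\A)$ is not an input, as it would be in your Hopf-theoretic setup; it is a \emph{corollary} of the theorem (the solution space is visibly a subalgebra, and the specialization $a_{2i+1}=0$, which sends $P_u(\A)$ to the ordinary $P_u$ in even-indexed variables, identifies the product). That same specialization is the paper's clean substitute for your proposed ``dominant word'' triangularity argument for linear independence. So to repair your proof you would need to replace the antipode-based definition and the cancellation argument of your first two steps with something like the paper's order-preserving construction; the remainder of your plan then goes through.
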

The definition of $\WQSym$ is recalled in Section \ref{SubsectDefWQSym}.
This algebra is the natural noncommutative analogue of $\QSym$.

As in the commutative setting, the solutions are constructed from the elements
of $\WQSym$ using a virtual alphabet:
\begin{equation}\label{EqVirtualAlphabetNC}
\A = \ominus (a_1) \oplus (a_2) \ominus (a_3) \oplus (a_4) \ominus \dots
\end{equation}
However, there was here an extra difficulty. Differences of alphabets for
word quasi-symmetric functions cannot be defined by means of the antipode,
which is not involutive, so we had to introduce an ad-hoc definition, see
Section~\ref{SubsectEvalNCAlph}.

Evaluating these functions in noncommuting variables on the interlacing
coordinates of Young diagrams does not bring any new information, as this
operation factors through the commutative version.
It is however interesting to study the change of variables between interlacing
coordinates and multirectangular coordinates in the noncommutative framework.
We study this morphism and describe its kernel in
Section~\ref{SectNonCommutativeMultirect}.
This result involves the lifting of a basis of $\QSym$ introduced by K. Luoto
(see Remark \ref{rmq:Luoto})
and the computation of the dimension of the smallest two-sided ideal of $\WQSym$
containing the element of degree $1$ and stable by the actions of the symmetric groups
(see Theorem \ref{thm:Ker_NC}, first item),
which might be of interest on their own.



\section{Definitions and notations in the commutative framework}
\label{SectDefNot}

\subsection{Stable polynomials}

By ``polynomial in infinitely many variables'', we mean an element of an
inverse limit in the category of graded rings, {\it i.e.}, a homogeneous
polynomial of degree $d$ is a sequence $R=(R_n(x_1,\ldots,x_n))_{n\ge 0}$ of
homogeneous polynomials of degree $d$ such that
$R_{n+1}(x_1,\ldots,x_n,0)=R_n(x_1,\ldots,x_n)$.
These objects are sometimes called {\em stable polynomials}.
Their set will be denoted $\C[X]$, where $X$ is the infinite variable set
$X=\{x_1,x_2,\dots\}$ (which should not be confused with the virtual alphabet
$\X$ defined by Equation \eqref{EqVirtualAlphabet}).

This kind of construction is classical in algebraic combinatorics:
for instance, symmetric functions (see \cite{Mcd}) and quasi-symmetric
functions (see below) are built in this way.
\medskip

In this context, Equation \eqref{eqfoncQS} should be understood as follows.
A stable polynomial $f=(f_n)_{n\ge 0}$ is solution of \eqref{eqfoncQS} if for each
$n \geq 2$ and each $1\le i<n$, one has:
\[f_n(x_1,\dots,x_n)_{|x_{i+1}=x_i}
  =f_{n-2}(x_1,\dots,x_{i-1},x_{i+2},\ldots,x_n).\]
The left-hand side means that we substitute $x_{i+1}$ by $x_i$.
Then the equality must be understood as an equality between polynomials in
$x_1,\dots,x_{i-1},x_{i+2},\dots,x_n$.
In particular, the left-hand side must be independent of $x_i$.
\medskip

We will also need to consider polynomials in two infinite sets of variables.
By definition, an element of $\C[\bm{p},\bm{q}]$ is a sequence
$(h_m)_{m \ge 0}$, where each $h_m$ is a polynomial in the $2m$ variables
$p_1,\dots,p_m,q_1,\dots,q_m$
satisfying the stability property
\begin{equation}
h_{m+1}
\left( \begin{array}{cccc}
    p_1 & \dots & p_m & 0\\
    q_1 & \dots & q_m & 0
\end{array} \right) = 
h_m
\left( \begin{array}{ccc}
    p_1 & \dots & p_m \\
    q_1 & \dots & q_m
\end{array} \right).
\end{equation}

Finally, we shall sometimes define stable polynomials by a sequence of polynomials in
an odd number of variables $(R_{2m+1})_{m \ge 0}$ such that
\[R_{2m+1}(x_1,\dots,x_{2m-1},0,0)=R_{2m-1}(x_1,\dots,x_{2m-1}).\]
This is not an issue, as such a sequence can be extended in a unique way to a
stable sequence $(R_n)_{n \ge 0}$ by setting
\[R_{2m}(x_1,\dots,x_{2m}) = R_{2m+1}(x_1,\dots,x_{2m},0).\]

\subsection{The Hopf algebra of quasi-symmetric functions}
\label{SubsectDefQSym}

Quasi-symmetric functions were introduced by I.~Gessel \cite{Ges} 
and may be seen as a generalization of the notion of symmetric functions.

A {\em composition} of $n$ is a sequence $I=(i_1,i_2,\dots,i_r)$
of positive integers, whose sum is equal to $n$.
The notation $I\vDash n$ means that $I$ is a composition of $n$
and $\ell(I)$ denotes the number of parts of $I$.
In numerical examples, it is customary to omit the parentheses and the commas.
For example, $212$ is a composition of $5$.
Given two compositions $I=(i_1,i_2,\dots,i_r)$ and $J=(j_1,j_2,\dots,j_s)$
their {\em concatenation} is $I \cdot J=(i_1,\dots,i_r,j_1,\dots,j_s)$.

Recall that the {\em multiset} of {\em shuffles} of $I$ and $J$ is defined
recursively by:
\begin{equation}
I\shuffle J=(i_1)\cdot \Big((i_2,\dots,i_r)\shuffle J \Big)
\sqcup (j_1)\cdot \Big(I \shuffle (j_2,\dots,j_s)\Big).
\end{equation}

In quasi-symmetric function theory, we use a slight modification of the
shuffle, called {\em quasi-shuffle}, defined recursively by:
\begin{equation}
\begin{split}
I\star J = (i_1)\cdot \Big((i_2,\dots,i_r)\star J \Big)
           & \sqcup (j_1)\cdot \Big(I \star (j_2,\dots,j_s)\Big) \\
           & \sqcup (i_1+j_1) \Big( (i_2,\dots,i_r)\star(j_2,\dots,j_s)\Big).
\end{split}
\end{equation}

For a composition $I=(i_1,\dots,i_r)$, we denote by $\bar I$ the composition
$(i_r,\dots,i_1)$ {\em mirror} to $I$.
For two compositions $I=(i_1,\dots,i_r)$ and $J=(j_1,\dots,j_s)$
we say that $J$ is a {\em refinement} of $I$ if 
$$\{i_1,i_1+i_2,\dots,i_1+\cdots+i_r\}\subseteq
  \{j_1,j_1+j_2,\dots,j_1+\cdots+j_s\},$$
which will be denoted by $I\le J$.

Consider the algebra
$\C[X]$ of polynomials in the totally ordered commutative alphabet
$X=\{x_1,x_2,\dots\}.$
Monomials $X^\bv:=x_1^{v_1} x_2^{v_2} \dots$
correspond to vectors $\bv=v_1,v_2,\dots$ with finitely many
non-zero entries.
For such a vector, we denote by $\bv_{\leftarrow}$ the vector obtained by
omitting the zero entries.
\begin{definition}
A polynomial $P\in\C[X]$ is said to be {\em quasi-symmetric} if and only if
for any $\bv$ and $\bw$ such that $\bv_{\leftarrow}=\bw_{\leftarrow}$,
the coefficients of $X^\bv$ and $X^\bw$ in $P$ are equal.

One can easily prove that the set of quasi-symmetric polynomials           
is a subalgebra of $\C[X]$, called quasi-symmetric function ring and denoted
$\QSym$.
\end{definition}
It should be clear that any symmetric polynomial is quasi-symmetric.
The algebra $\QSym$ of quasi-symmetric functions has a basis of monomial
quasi-symmetric functions $M$ indexed by compositions $I= (i_1,\dots, i_r)$,
where
\begin{equation}\label{eq:QSymM}
M_I = \sum_{a_1<\cdots<a_r} x_{a_1}^{i_1} \cdots x_{a_r}^{i_r} .
\end{equation}
By convention, $M_{()}=1$, where $()$ designs the empty composition.

The product of monomial functions is given by the quasi-shuffle of their
indices: $M_I\,M_J=\sum_{K\in I\star J}M_K.$
For instance,
$$M_{2}\,M_{11} = M_{112} + M_{121} + M_{211} + M_{13} + M_{31} .$$
This given, the set $\QSym$ is an algebra with unit $M_0=1$.
Moreover it is graded by the usual degree.
The coproduct of $\QSym$ may be defined on monomial functions through the
deconcatenation of compositions:
\begin{equation}\label{eq:coproduct}
\Delta(M_I)=\sum_{k=0}^r M_{(i_1,\dots,i_k)}\otimes M_{(i_{k+1},\dots,i_r)}.
\end{equation}
As an example, one has
$$\Delta M_{21} = {1}\otimes M_{21}+{M_2}\otimes M_{1}+M_{21}\otimes {1}.$$ 
This operation endows $\QSym$ with a bialgebra structure, whence a Hopf
algebra structure since it is graded. The antipode, denoted as usual by $S$
has been explicitly computed by C.~Malvenuto \cite{malvenuto} and R.~Ehrenborg
\cite{ehrenborg}:
\begin{equation}\label{eq:antipode}
S(M_I)={(-1)}^{\ell(I)}\,\sum_{J\le I} M_{\bar J}.
\end{equation}
For example, we have:
$$S(M_{122})=-(M_{221}+M_{41}+M_{23}+M_{5}).$$

\subsection{Evaluation of quasi-symmetric functions on sums and differences of alphabets}
\label{SubsectQSymAlph}

We explain now how to evaluate quasi-symmetric functions on sums or differences of alphabets.
To start with, we shall give another interpretation of the coproduct $\Delta$.
To do this, we consider two ordered alphabets $X$ and $Y$ and we denote by
$X \oplus Y$ their ordinal sum, that is, their disjoint union seen as an
ordered alphabet with $x<y$ for $x\in X$ and $y\in Y$.
From \eqref{eq:coproduct}, we may check that
\begin{equation} 
\Delta P=\sum_k F_k \otimes G_k
\end{equation}
implies 
\begin{equation}\label{EqSum}
    P(X \oplus Y)=\sum_k F_k(X) \, G_k(Y).
\end{equation} 
This defines sums of alphabets and the evaluation of quasi-symmetric functions
on these.

Let us now introduce the formal inverse of $Y$ for operation $\oplus$,
denoted by $\ominus Y$.
Of course, $\ominus Y$ does not exist as an alphabet;
we refer to it as a {\em virtual alphabet}.
We may write 
\begin{equation}\label{eq:diff}
P(X \ominus Y)=\sum_k F_k(X) \, G_k( \ominus Y)
\end{equation}
with the same notations as above.
For this to make sense, we need to define the evaluation $P(\ominus Y)$ of a
quasi-symmetric function $P$ on the opposite of $Y$.
We set 
\begin{equation}\label{eq:virtual}
    P(\ominus Y)=S(P)(Y). 
\end{equation}
Let us explain this choice.
Using the axiom of the antipode in a Hopf algebra,
we observe that \eqref{eq:diff} evaluated at $X=Y$ gives,
for any homogeneous quasi-symmetric function $P$ of positive degree,
\begin{align}
    P(Y \ominus Y)&=0. \label{EqYMoinsY}
\intertext{Similarly,}
P(\ominus Y \oplus Y)&=0. \label{EqMoinsYPlusY}
\end{align}
This is consistent with the fact that $\ominus Y$ is the inverse of $Y$
for $\oplus$, that is that $Y \ominus Y=\ominus Y \oplus Y= \emptyset$ 
($\emptyset$ is here the empty alphabet).

Equations \eqref{EqSum} and \eqref{eq:virtual} enable us to evaluate
quasi-symmetric functions on differences of alphabets.  As an example, we
have:
\begin{align*}
    M_{21}(X \ominus Y)&=M_{21}(X) + M_2(X) S(M_1)(Y) +S(M_{21})(Y)\\
    &=M_{21}(X)-M_2(X)M_1(Y)+M_{12}(Y)+M_{3}(Y).
\end{align*}
It is also possible to have more complicated linear combinations of alphabets
(beware that $\oplus$ is not a commutative operator), for instance,
\begin{align*}                                       
    M_{21}(X \ominus Y \oplus Z)
    &=M_{21}(X) + M_2(X) S(M_1)(Y) + M_2(X) M_1(Z) \\
    &\qquad + S(M_{21})(Y) +
    S(M_2)(Y) M_1(Z) + M_{21}(Z) ;\\
    &= M_{21}(X) - M_2(X) M_1(Y) + M_2(X) M_1(Z) \\
    &\qquad +M_{12}(Y)+M_{3}(Y) 
        - M_2(Y) M_1(Z) + M_{21}(Z).
\end{align*}

\section{Solution of the problem}\label{SectSolution}

Consider the virtual ordered alphabet for quasi-symmetric functions
\begin{equation}
   \X =\ominus (x_1) \oplus (x_2) \ominus (x_3) \oplus (x_4) \cdots 
\end{equation}
If $F$ is a quasi-symmetric function,
one can define a stable polynomial $F(\X)$ as follows: for $n \ge 0$,
\[(F(\X))_n (x_1,\dots,x_n) = 
F \big(\ominus (x_1) \oplus (x_2) \ominus \dots (x_n) \big).\]

Using Equations \eqref{EqYMoinsY} and \eqref{EqMoinsYPlusY},
we see that setting $x_i=x_{i+1}$ in this alphabet cancels
these variables, so that $F(\X)$ satisfies (\ref{eqfoncQS}). The converse is
also true:

\begin{theorem}
A function $f$ satisfies the functional equation \eqref{eqfoncQS}
if and only if $f\in\QSym(\X)$.
\end{theorem}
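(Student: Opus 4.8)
The forward implication, that every $F(\X)$ with $F\in\QSym$ solves \eqref{eqfoncQS}, is exactly the computation made just before the statement, using \eqref{EqYMoinsY} and \eqref{EqMoinsYPlusY}. So the whole content is the reverse inclusion $\Sol\subseteq\QSym(\X)$, and the natural way to organize it is to show that the algebra morphism
\[
\phi\colon \QSym\to\C[X],\qquad F\mapsto F(\X),
\]
whose image lies in $\Sol$, is a linear isomorphism onto $\Sol$ in each degree $d$; this also yields Theorem~\ref{ThmSolEqQSym}. The plan is to prove injectivity by an explicit triangularity, and then to deduce surjectivity from a matching dimension bound, the latter being the crux.

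For injectivity, I would introduce for each composition $I=(i_1,\dots,i_r)$ the \emph{standard} monomial $m_I=x_1^{i_1}x_3^{i_2}\cdots x_{2r-1}^{i_r}$, built only from the odd-indexed (negative) letters of $\X$. Writing $\X=A_1\oplus A_2\oplus\cdots$ with $A_{2k-1}=\ominus(x_{2k-1})$ and $A_{2k}=(x_{2k})$, the iterated form of \eqref{eq:coproduct}--\eqref{EqSum} gives $M_{I'}(\X)=\sum_{I'=J_1\cdots J_N}\prod_k M_{J_k}(A_k)$, a sum over deconcatenations of $I'$. A one-letter computation from \eqref{eq:virtual} and \eqref{eq:antipode} shows $M_J(\ominus(x))=(-1)^{\ell(J)}x^{|J|}$ for $J\neq()$, so $m_I$ is produced exactly by cutting $I'$ into $r$ contiguous nonempty blocks of sizes $i_1,\dots,i_r$ placed on $A_1,A_3,\dots,A_{2r-1}$ (all even-indexed letters receiving empty blocks). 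Such a cut exists if and only if $I'$ refines $I$, and then it is unique and contributes $(-1)^{\ell(I')}$. Hence
\[
[m_I]\,M_{I'}(\X)=(-1)^{\ell(I')}\ \text{ if } I\le I',\qquad 0\ \text{ otherwise},
\]
a matrix that is triangular for the refinement order with nonzero diagonal $(-1)^{\ell(I)}$. In particular the family $\{M_{I'}(\X)\}_{I'\vDash d}$ is linearly independent, so $\phi$ is injective and $\dim\Sol_d\ge 2^{d-1}$.

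Surjectivity then reduces to the opposite bound $\dim\Sol_d\le 2^{d-1}$. Indeed, given $f\in\Sol_d$, the invertibility of the triangular matrix above lets me choose $\hat f\in\phi(\QSym)$ having the same coefficient on every $m_I$ as $f$; then $e:=f-\hat f\in\Sol_d$ satisfies $[m_I]e=0$ for all $I\vDash d$, and it suffices to prove that such an $e$ vanishes. Equivalently, I must show that a solution is determined by its standard coefficients $([m_I]f)_{I\vDash d}$, of which there are $2^{d-1}$.

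This last step is the main obstacle. My plan is to turn \eqref{eqfoncQS} into linear relations among the coefficients of $f$ and run an induction on the degree. Reading off that $f|_{x_{i+1}=x_i}$ must be free of $x_i$ yields, for every $i\ge1$, every $s\ge1$, and every monomial $\mu$ in the variables other than $x_i,x_{i+1}$,
\[
\sum_{p+q=s}\ [\,x_i^{\,p} x_{i+1}^{\,q}\,\mu\,]\,f=0 .
\]
Writing $e=\sum_{j\ge0}x_1^{\,j}e_j$ with $e_j\in\C[x_2,x_3,\dots]$, the relations with $i\ge2$ say that each $e_j$ is a solution of the analogous system on the shifted alphabet $\{x_2,x_3,\dots\}$ (whose sign pattern now begins with a $\oplus$, that is, the opposite-parity version of the problem). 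For $j\ge1$ this piece has degree $<d$, so the inductive hypothesis applies, while the relation with $i=1$ couples the pieces and, together with the vanishing of all standard coefficients, forces $e=0$. The delicate points I expect to fight with are the parity shift caused by removing the first letter and the bookkeeping that identifies the standard coefficients of the $e_j$ inside those of $e$; once these are controlled the induction closes and gives $\dim\Sol_d\le 2^{d-1}$, whence $\phi$ is an isomorphism onto $\Sol$.
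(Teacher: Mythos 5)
Your forward inclusion, your triangularity computation $[m_I]\,M_{I'}(\X)=(-1)^{\ell(I')}$ for $I\le I'$ (and $0$ otherwise), the resulting injectivity, and the reduction of surjectivity to the claim that a solution $e$ with $[m_I]e=0$ for all $I\vDash d$ must vanish, are all correct; the triangularity is in fact a legitimate alternative to the paper's independence argument (the paper instead sets $x_{2i+1}=0$, which collapses $M_{I'}(\X)$ to the ordinary monomial functions $M_{I'}(x_2,x_4,\dots)$).

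The gap is that this last claim --- which, as you say yourself, is the crux --- is never proved: what you offer is a plan whose two ``delicate points'' are exactly where the mathematics lies, and as set up the induction does not close. First, the parity mismatch is not cosmetic. For $j\ge1$ the inductive hypothesis gives $e_j=F_j(\X')$ with $\X'=\ominus(x_2)\oplus(x_3)\ominus(x_4)\oplus\cdots$, but $[m_I]e=0$ only tells you that the coefficients of $e_j$ on monomials in $x_3,x_5,x_7,\dots$ vanish, and these are \emph{not} the standard monomials of the shifted problem (those would sit on $x_2,x_4,\dots$). A further argument is needed to deduce $F_j=0$; it exists --- for the $\oplus$-positions one has $[x_3^{b_1}x_5^{b_2}\cdots x_{2s+1}^{b_s}]\,M_{I'}(\X')=\delta_{I',(b_1,\dots,b_s)}$, so these coefficients read off the $M$-expansion of $F_j$ directly (equivalently, set the $\ominus$-variables to zero) --- but it is nowhere in your text. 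Second, the piece $e_0$ has degree $d$, not $<d$, so your induction on degree never applies to it; ``the relation with $i=1$ couples the pieces and forces $e=0$'' is an assertion, not an argument. It too can be completed: once $e_j=0$ for all $j\ge1$, $e$ is free of $x_1$, the $i=1$ instance of \eqref{eqfoncQS} then forces the identity $e(x_1,x_2,x_3,x_4,\dots)=e(x_3,x_4,\dots)$ of stable polynomials (no substitution being left to perform), and iterating this deletion of leading pairs shows $e$ is constant, hence zero in positive degree. Until both steps are written out, the theorem is not proved. For comparison, the paper's upper bound avoids all of this: it works with \emph{packed} monomials and shows, by induction on the statistic $\ell(X^\bv)=\sum_i i\,v_i$, that every coefficient of a solution is linearly determined by the packed ones, giving $\dim\Sol_d\le 2^{d-1}$ with no parity issues and no separate treatment of a top piece.
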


\begin{proof}
Note that a polynomial $f$ satisfies Equation \eqref{eqfoncQS}
if and only if all its homogeneous components do.
Therefore it is enough to prove the statement for a homogeneous function $f$.

Let us first prove that the dimension of the space of homogeneous polynomials
in $\C[X]$ of degree $n$ satisfying \pref{eqfoncQS} is at most equal to
$2^{n-1}$.
We say that a monomial 
$$X^\bv=x_1^{v_1}x_2^{v_2}\cdots$$ 
in $\C[X]$ is {\em packed} if $\bv$ can be written as $\bc,0,0,0,\dots$
with $\bc$ a composition (\ie a vector whose entries are positive integers).
Thus, the number of packed monomials of degree $n$ is $2^{n-1}$.
Let 
$$P=\sum_{\bv} c_\bv X^\bv$$ 
be a homogeneous polynomial of degree $n$, which is  solution of
\pref{eqfoncQS}
(the sum runs over sequences of non-negative integers of sum $n$).
Associate with each monomial $X^\bv$ an integer 
\begin{equation}
\label{eq:ell}
\ell(X^\bv)=\sum_{i\in\N}i \, v_i.
\end{equation}
Then, all the coefficients of $P$ are determined by those of packed monomials.
To see this, consider a non-packed monomial $X^\bw=x_1^{w_1}x_2^{w_2}\cdots$ 
with $w_i=0$ and $w_{i+1}\neq 0$. 
We substitute $x_i=x_{i+1}=x$ in $P$.
Looking at the monomial
$$x_1^{w_1}\cdots x_{i-1}^{w_{i-1}}x^{w_{i+1}}x_{i+2}^{w_{i+2}}\cdots$$
that does not appear on the right-hand side of \pref{eqfoncQS}, 
we get a linear relation between $c_\bw$ and the coefficients $c_\bv$ of the
monomials $X^\bv$ such that $\ell(X^\bv)<\ell(X^\bw)$,
whence the upper bound on the dimension.

Now, for each composition $I$ of $n$, the stable polynomial $M_I(\X)$
satisfies Equation \eqref{eqfoncQS}. Besides, all $M_I(\X)$ are linearly
independent since, setting $x_{2i+1}=0$ in $\X$ transforms $M_I(\X)$ into
the usual monomial quasi-symmetric functions in even-indexed variables
$M_I(x_2,x_4,x_6,\cdots)$.
\end{proof}

\begin{example}{\rm
Here are the functions $M_I(\X)$ for compositions $I$ of length at most 3:
\begin{align}
    M_{(k)}(\X)    &= - x_1^k + x_2^k - x_3^k + x_4^k - \dots ;\\
M_{(k,\ell)}(\X)   &= \sum_{i} x_{2i+1}^{k+\ell}
                     + \sum_{i<j} (-1)^{i+j} x_i^k x_j^\ell ;\\
M_{(k,\ell,m)}(\X) &= - \sum_{i} x_{2i+1}^{k+\ell+m} + 
\sum_{\gf{i,j}{i<2j+1}} (-1)^i x_i^{k} x_{2j+1}^{\ell+m} \\
\nonumber & \qquad \qquad +
          \sum_{\gf{i,j}{2i+1<j}} (-1)^j x_{2i+1}^{k+\ell} x_j^m
+ \sum_{h<i<j} (-1)^{h+i+j} x_h^k x_i^\ell x_j^m
\end{align}
At least in the first two formulas, it is not hard to check that putting
$x_p=x_{p+1}$ eliminates these variables.}
\end{example}

\section{A new algebra of functions on Young diagrams}\label{SectOnYoung}

\subsection{Interpreting elements of $\Sol$ as functions on Young diagrams}
\label{SubsectActY}

Recall from the introduction that the interlacing coordinates define a map
$\IC$ from Young diagrams to finite sequences of integers.
Besides, a polynomial in infinitely many variables (and in particular the
elements of $\Sol$) can be evaluated on any finite sequence.
Therefore, if $\FF(\Young,\C)$ denotes the algebra of complex-valued 
functions on the set $\Young$ of all Young diagrams, one has a mapping:
\[ \ActY : \begin{array}{rcl} \Sol & \to & \FF(\Young,C) \\
    f &\mapsto & f \circ \IC
\end{array}.\]

\begin{proposition}\label{PropKerActY}
    The kernel of $\ActY$ is the ideal $\langle M_1(\X) \rangle$
    generated by $M_1(\X)=\sum_i (-1)^{i+1} x_i$.
\end{proposition}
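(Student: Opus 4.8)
The plan is to prove the two inclusions separately. First I would observe that $\ActY$ is an algebra homomorphism: it sends $f$ to the pointwise function $\lambda\mapsto f(\IC(\lambda))$, and evaluation at a fixed integer sequence is multiplicative, so $\Ker\ActY$ is an ideal of $\Sol$. For the inclusion $\langle M_1(\X)\rangle\subseteq\Ker\ActY$ it then suffices to check that the single generator lies in the kernel. But $M_1(\X)=\sum_i(-1)^{i+1}x_i=-\sum_i(-1)^ix_i$, and on the interlacing coordinates of any Young diagram the Kerov relation $\sum_i(-1)^ix_i=0$ recalled in Section~\ref{SubsectMotivation} holds; hence $M_1(\X)$ vanishes on every $\IC(\lambda)$ and the whole ideal is contained in $\Ker\ActY$.

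For the reverse inclusion, suppose $f=(f_n)_{n\ge0}\in\Sol$ vanishes on every $\IC(\lambda)$; I may assume $f$ homogeneous. Fix $m$, set $\ell_m=\sum_{i=1}^{2m+1}(-1)^ix_i$, and note that the $(2m+1)$-st component of $M_1(\X)$ equals $-\ell_m$, with $\{\ell_m=0\}$ the hyperplane $H_m\subset\C^{2m+1}$. The key step is a Zariski-density statement: the interlacing coordinates of length $2m+1$ include all strictly decreasing integer points of $H_m$ (by the reconstruction recalled from \cite[Proposition 2.4]{IO02}), and these are dense in $H_m$. To see this I would parametrize $H_m$ by the free variables $x_1,\dots,x_{2m}$ with $x_{2m+1}$ determined linearly, note that the strict inequalities $x_1>\cdots>x_{2m+1}$ cut out a nonempty open cone in $\R^{2m}$, and invoke the standard fact that the integer points of a full-dimensional cone are Zariski dense. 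Since $f_{2m+1}$ vanishes on this dense subset, it vanishes on all of $H_m$, and as $\ell_m$ is an irreducible linear form in the UFD $\C[x_1,\dots,x_{2m+1}]$ we get a factorization $f_{2m+1}=\ell_m\,g_{2m+1}$ with $g_{2m+1}$ uniquely determined.

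It remains to assemble the $g_{2m+1}$ into an element $g\in\Sol$ with $f=M_1(\X)\cdot(-g)$. Specializing $x_{2m}=x_{2m+1}=0$ carries $\ell_m$ to $\ell_{m-1}$ and $f_{2m+1}$ to $f_{2m-1}$; cancelling the nonzero factor $\ell_{m-1}$ in the integral domain $\C[x_1,\dots,x_{2m-1}]$ yields $g_{2m+1}(x_1,\dots,x_{2m-1},0,0)=g_{2m-1}$, so by the odd-component convention of Section~\ref{SectDefNot} the family $(g_{2m+1})$ extends uniquely to a stable polynomial $g$. To verify $g\in\Sol$, I would apply the same cancellation to the functional equation: for odd $n$ it is enough to check \eqref{eqfoncQS} (the even cases follow from the odd ones by stability), and setting $x_{i+1}=x_i$ in $f=M_1(\X)(-g)$ while using that both $f$ and $M_1(\X)$ satisfy \eqref{eqfoncQS} (the two affected terms of $M_1(\X)$ have opposite signs and cancel) reduces, after cancelling the nonzero linear form $M_1(\X)_{n-2}$, to the statement that $g|_{x_{i+1}=x_i}$ equals $g$ with $x_i,x_{i+1}$ deleted. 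Hence $g\in\Sol$ and $f=M_1(\X)\cdot(-g)\in\langle M_1(\X)\rangle$.

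I expect the genuine obstacle to be the Zariski-density of the interlacing coordinates inside the hyperplane $H_m$; once that geometric input is in place, the divisibility, the stability of the quotient, and the verification that $g$ again satisfies \eqref{eqfoncQS} are all formal manipulations in the integral domain of stable polynomials.
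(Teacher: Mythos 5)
Your proof is correct and follows essentially the same route as the paper's: the easy inclusion is the Kerov relation $\sum_i(-1)^ix_i=0$, and the reverse one rests on the fact that enough integer points of the hyperplane $\{\sum_i(-1)^ix_i=0\}$ are interlacing coordinates of Young diagrams to force, by Zariski density, each $f_{2m+1}$ to vanish on the whole hyperplane and hence be divisible by the linear form. The differences are only in execution --- the paper substitutes the alternating sum as a new first variable and evaluates at decreasing negative integers, where you use cone density and irreducibility of the linear form directly --- and you additionally spell out two verifications the paper leaves implicit, namely the stability of the cofactor $g$ and the fact that $g$ again satisfies \eqref{eqfoncQS}, so that $f$ lies in the ideal generated inside $\Sol$ and not merely inside $\C[X]$.
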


\begin{proof}
    It is known that the alternating sum of the interlacing coordinates
    is always zero (see, {\em e.g.}, \cite[Proposition 2.4]{IO02}),
    which means that the function $M_1(\X)$ is in the kernel of $\ActY$.
    \medskip

    For the converse statement, we need the following lemma.
    \begin{lemma}
        Let $f=(f_n)_{n \ge 0}$ be a stable polynomial in $\C[X]$.
        There exists a unique stable polynomial $f'$ such that for any $n$,
        \begin{equation}\label{EqDefFp}
            f_n(x_1,\dots,x_n) = 
        f'_n\left( \sum_{i=1}^n (-1)^i x_i,x_2,\dots,x_n \right).
    \end{equation}
        We denote by $\Phi_{x \to \sum}$ the map  $f\mapsto f'$.
        \label{LemDefFp}
    \end{lemma}
    \begin{proof}[Proof of the Lemma]
        It is clear that, for a fixed integer $n$, the polynomial $f'_n$
        is uniquely determined by Equation \eqref{EqDefFp}:
        \[f'_n(x'_1,\dots,x'_n)=
        f\left( \sum_{i=1}^n (-1)^i x'_i -x'_1,x'_2,\dots,x'_n \right).\]
        It remains to check that the sequence $(f'_n)_{n \ge 0}$
        defines a stable polynomial, which is straightforward.
    \end{proof}
    \noindent {\em End of the proof of the proposition.}
    Let $f$ be a stable polynomial in the kernel of $\ActY$.
    Fix some integer $m$ and a decreasing sequence $x_2>x_3>\dots>x_{2m+1}$
    of {\em negative} integers.
    Set \[x_1 = x_2-x_3+x_4-\dots +x_{2m} - x_{2m+1}.\]
    Then $x_1$ is a positive integer, hence $x_1>x_2$ and, as its
    alternating sum vanishes, $(x_1,\dots,x_{2m+1})$ is 
    the list of interlacing coordinates of some Young diagram.
    Thus,
 \[f(x_1,x_2,x_3,\dots,x_{2m+1})=0.\]
 Let $f'=\Phi_{x \to \sum}(f)$. By Proposition \ref{PropKerActY},
    \[f'(0,x_2,x_3,\dots,x_{2m+1})= f(x_1,x_2,x_3,\dots,x_{2m+1})\]
    and thus, it vanishes.
    This is true for all decreasing lists $(x_2,\dots,x_{2m+1})$ of negative
    integers, so the polynomial $f'(0,x_2,x_3,\dots,x_{2m+1})$ is
    identically zero. In other terms, there exists some polynomial
    $g'_{2m+1}\in \C[x_1,\dots,x_{2m+1}]$ such that
    \[f'_{2m+1} = x'_1 \cdot g'_{2m+1} \text{, or equivalently }
    f_{2m+1} = \left(\sum_{i=1}^n (-1)^i x_i\right) \cdot g_{2m+1}\]
    for some polynomial $g_{2m+1} \in \C[x_1,\dots,x_{2m+1}]$.
    It is straightforward to check that $(g_{2m+1})_{m \ge 0}$ defines a stable
    polynomial and hence 
    \[f=\left(\sum_{i=1}^n (-1)^i x_i\right) \cdot g\]
    for some $g\in \C[X]$, which is what we wanted to prove.
\end{proof}

The image of $\ActY$ is a subalgebra of functions on Young diagrams, that we
shall call from now on \emph{quasi-symmetric functions on Young diagrams} and
denote by $\QLa$.
By definition, it is isomorphic to $\QSym/\langle M_1 \rangle$ and contains
the algebra $\Lambda=\ActY\big( \text{Sym}(\X) \big)$ of 
{\em symmetric functions on Young diagrams}\footnote{See \cref{footnote:symmetric}.}
considered by Kerov and Olshanski.

\begin{corollary}\label{CorolDimQLa}
The homogeneous component of degree $n\ge 2$ of $\QLa$ has dimension $2^{n-2}$.
\end{corollary}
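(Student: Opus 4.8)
The plan is to read off the graded dimensions of $\QLa$ directly from its presentation as a quotient of $\QSym$. By the discussion preceding the statement, $\QLa$ is isomorphic, as a graded algebra, to $\QSym/\langle M_1\rangle$, where $\langle M_1\rangle$ is the principal ideal generated by the degree-one element $M_1$. Since $M_1$ is homogeneous and $\QSym$ is commutative, this ideal is graded and its component in degree $n$ is precisely $M_1\cdot\QSym_{n-1}$, where $\QSym_{n-1}$ denotes the homogeneous component of degree $n-1$. Hence the degree-$n$ component of $\QLa$ is $\QSym_n/(M_1\cdot\QSym_{n-1})$, and its dimension equals $\dim\QSym_n-\dim(M_1\cdot\QSym_{n-1})$.

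First I would recall that $\dim\QSym_n=2^{n-1}$ for $n\ge1$, the number of compositions of $n$, since the monomial quasi-symmetric functions $M_I$ with $I\vDash n$ form a basis. It then remains to compute $\dim(M_1\cdot\QSym_{n-1})$, and this is where the only substantive point lies: the claim is that multiplication by $M_1$ is injective on $\QSym$, so that $\dim(M_1\cdot\QSym_{n-1})=\dim\QSym_{n-1}=2^{n-2}$ for $n\ge2$. The hard part will be exactly this injectivity; everything else is bookkeeping.

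To establish it, the cleanest route is to observe that $\QSym$ has no zero divisors, since it is a subalgebra of the ring $\C[X]$ of stable polynomials, which is an integral domain. Indeed, a nonzero stable polynomial $R=(R_n)$ has $R_n\neq0$ for all sufficiently large $n$ (by stability under $x_n\mapsto0$), so the componentwise product of two nonzero stable polynomials has nonzero components in every large enough $\C[x_1,\dots,x_n]$ and is therefore nonzero. As $M_1\neq0$, multiplication by $M_1$ is then injective, which is the claim.

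Putting the pieces together, for $n\ge2$ one obtains
\[
\dim(\QLa)_n=\dim\QSym_n-\dim\QSym_{n-1}=2^{n-1}-2^{n-2}=2^{n-2},
\]
as desired. Equivalently, the whole argument can be phrased through Hilbert series: injectivity of multiplication by $M_1$ yields a short exact sequence of graded spaces $0\to\QSym[-1]\to\QSym\to\QLa\to0$, whence $\mathrm{Hilb}(\QLa)=(1-t)\,\mathrm{Hilb}(\QSym)=(1-t)^2/(1-2t)=1+\sum_{n\ge2}2^{n-2}t^n$. This reads off the stated dimensions and incidentally records that $\QLa$ vanishes in degree $1$, which is why the hypothesis $n\ge2$ is needed.
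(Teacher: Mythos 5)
Your proposal is correct and follows exactly the route the paper intends: the paper states the corollary as an immediate consequence of the isomorphism $\QLa\simeq\QSym/\langle M_1\rangle$ from Proposition~\ref{PropKerActY}, with the dimension count $2^{n-1}-2^{n-2}=2^{n-2}$ left implicit. The one detail you supply that the paper omits --- injectivity of multiplication by $M_1$, via the observation that $\C[X]$ (and hence $\QSym$) is an integral domain --- is a genuine and correctly handled gap-filler, not a different approach.
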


\subsection{Multirectangular coordinates}
\label{SubsectMultirec}

We consider here a different set of coordinates for Young diagrams,
called {\em multirectangular coordinates} and introduced%
\footnote{In fact, R.~Stanley considered coordinates $\pp'$ and $\qq'$ related to
ours by $p'_i=p_i$ and $q'_i=q_1+\dots+q_i$. However, for our purpose,
we prefer the more symmetric version presented here.}
by R.~Stanley in \cite{St1}.

Consider two sequences $\pp$ and $\qq$ of non-negative integers
of the same length $m$.
We associate with these the Young diagram drawn on Figure \ref{FigYDMultirec}.
Note that we allow some $p_i$ or some $q_i$ to be zero, so that the
same diagram can correspond to several sequences (see again Figure \ref{FigYDMultirec}).
Nevertheless, if we require the variables $p_i$ and $q_i$ to be positive,
there is a unique way to associate with a diagram some multirectangular coordinates.
This defines a mapping:
\[ \MRCoord : \{\text{Young diagrams}\}
\to \left\{ \begin{tabular}{c}
    pairs of integer sequences \\
    of the same length
\end{tabular} \right\}.\]

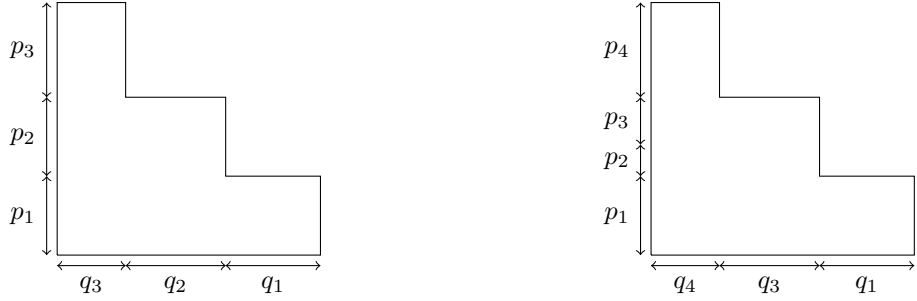
\begin{figure}[t]
    \begin{minipage}[t]{.48 \linewidth}
        \begin{center}
        \begin{tikzpicture}[scale=.7]
            \draw (0,0)--(5,0)--(5,1.5)--(3.2,1.5)--(3.2,3)--(1.3,3)--(1.3,4.8)--(0,4.8)--(0,0);
            \draw[<->] (0,-.2)--(1.3,-.2) node [below,midway] {\footnotesize $q_3$};
            \draw[<->] (1.3,-.2)--(3.2,-.2) node [below,midway] {\footnotesize $q_2$};
            \draw[<->] (3.2,-.2)--(5,-.2) node [below,midway] {\footnotesize $q_1$};
            \draw[<->] (-.2,0)--(-.2,1.5) node [left,midway] {\footnotesize $p_1$};
            \draw[<->] (-.2,1.5)--(-.2,3) node [left,midway] {\footnotesize $p_2$};
            \draw[<->] (-.2,3)--(-.2,4.8) node [left,midway] {\footnotesize $p_3$};
        \end{tikzpicture}
        \end{center}
    \end{minipage}\hfill
    \begin{minipage}[t]{.48 \linewidth}
        \begin{center}
        \begin{tikzpicture}[scale=.7]
            \draw (0,0)--(5,0)--(5,1.5)--(3.2,1.5)--(3.2,3)--(1.3,3)--(1.3,4.8)--(0,4.8)--(0,0);
            \draw[<->] (0,-.2)--(1.3,-.2) node [below,midway] {\footnotesize $q_4$};
            \draw[<->] (1.3,-.2)--(3.2,-.2) node [below,midway] {\footnotesize $q_3$};
            \draw[<->] (3.2,-.2)--(5,-.2) node [below,midway] {\footnotesize $q_1$};
            \draw[<->] (-.2,0)--(-.2,1.5) node [left,midway] {\footnotesize $p_1$};
            \draw[<->] (-.2,1.5)--(-.2,2.1) node [left,midway] {\footnotesize $p_2$};
            \draw[<->] (-.2,2.1)--(-.2,3) node [left,midway] {\footnotesize $p_3$};
            \draw[<->] (-.2,3)--(-.2,4.8) node [left,midway] {\footnotesize $p_4$};
        \end{tikzpicture}
        \end{center}
    \end{minipage}
    \caption{The same Young diagram given by two sets of multirectangular coordinates
    (in the right-hand side, $q_2=0$).}
    \label{FigYDMultirec}
\end{figure}

The multirectangular coordinates are related to interlacing coordinates by the
following changes of variables: for all $i \le m$,
\begin{equation}\label{EqChgtVar}
    \left\{ \begin{array}{l}
        p_i = x_{2i-1}-x_{2i};\\
        q_i = x_{2i}-x_{2i+1};
    \end{array} \right. \hspace{1cm}
     \left\{ \begin{array}{l}    
        x_{2i+1} = (q_{i+1}+\dots+q_m)-(p_1+\dots+p_i); \\
        x_{2i} = (q_i+\dots+q_m)-(p_1+\dots+p_i).  
    \end{array} \right.
\end{equation}
It should not be surprising that there are only $2m$ rectangular coordinates
while there are $2m+1$ interlacing coordinates
because the latter must satisfy the linear relation $\sum_i (-1)^i x_i=0$.

Let us now consider functions on Young diagrams which are polynomials in its
rectangular coordinates.
This amounts  to looking for polynomials in two infinite sets of variables
\[h \left( \begin{array}{ccc}
    p_1 & p_2 & \dots\\
    q_1 & q_2 & \dots
\end{array} \right) \in \C[\bm{p},\bm{q}]\]
satisfying the following two equations:
for all $1 \le i \le m$,
\begin{align}
    \left. h_m\left( \begin{array}{ccc} 
        p_1 & \dots & p_m \\
        q_1 & \dots & q_m
    \end{array} \right)\right|_{q_i=0}
    &=
    h_{m-1}\left( \begin{array}{ccccccc}
        p_1&\dots&p_{i-1}&p_i+p_{i+1}&\dots&p_m\\
        q_1&\dots&q_{i-1}&q_{i+1}&\dots&q_m
    \end{array} \right) ;
    \label{EqQZero} \\
    \left. h_m\left( \begin{array}{ccc} 
        p_1 & \dots & p_m \\
        q_1 & \dots & q_m
    \end{array} \right)\right|_{p_i=0}
    &=
    h_{m-1}\left( \begin{array}{ccccccc}
        p_1&\dots&p_{i-1}&p_{i+1}&\dots&p_m\\
        q_1&\dots&q_{i-1}+q_i&q_{i+1}&\dots&q_m
    \end{array} \right). \label{EqPZero}
\end{align}
In both cases $i=1$ and $i=m$, erase the column containing non-defined
variables.

All these equations express the fact that the images of two sequences
corresponding to the same Young diagram
(as the ones of Figure \ref{FigYDMultirec}) have the same image by $h$.
We denote by $\Sol'$ the space of solutions of this system.

\subsection{Link between the two systems}
\label{SubsectLinkTwoSystems}
In this Section, we study the relation between the two spaces of solutions
$\Sol$ and $\Sol'$.\bigskip

Consider an element $f=(f_n)_{n \ge 0}$ in $\Sol$.
Let $m\ge 0$.
Replace all variables $x_1,\ldots,x_{2m+1}$ in $f_{2m+1}$
according to \eqref{EqChgtVar}, and set
\begin{equation}
h_m\left( \begin{array}{ccc} 
        p_1 & \dots & p_m \\
        q_1 & \dots & q_m
    \end{array} \right) = 
    f_{2m+1}(x_1,\dots,x_{2m+1}).
\end{equation}
Clearly, $h_m$ is a polynomial in $p_1,\dots,p_m,q_1,\dots,q_m$.

Besides, by definition,
\[h_{m+1}\left( \begin{array}{cccc}                                  
        p_1 & \dots & p_m &0 \\                                    
        q_1 & \dots & q_m &0                                      
    \end{array} \right) =
f_{2m+3}(x_1,\dots,x_{2m+1},x_{2m+1},x_{2m+1}).\]
But, as $f$ is an element of $\Sol$, the right-hand side is equal to
$f_{2m+1}(x_1,\dots,x_{2m+1})$.
This implies that $(h_m)_{m \ge 0}$ is an element of $\C[\bm{p},\bm{q}]$.

We will now show that $h$ satisfies Equation \eqref{EqQZero}.
Let us now consider an integer $m \ge 1$ and variables
$p_1,\dots,p_m,q_1,\dots,q_m$.
Assume additionally that $q_i=0$ for some $i$,
which implies $x_{2i}=x_{2i+1}$.
Thus, as $f$ is an element of $\Sol$,
\[f_{2m+1}(x_1,\dots,x_{2m+1})=f_{2m-1}(x_1,\dots,x_{2i-1},x_{2i+2},\dots,x_{2m+1}).\]
Observe that the right-hand side corresponds to the definition of
 \[   h_{m-1}\left( \begin{array}{ccccccc}
        p_1&\dots&p_{i-1}&p_i+p_{i+1}&\dots&p_m\\
        q_1&\dots&q_{i-1}&q_{i+1}&\dots&q_m
    \end{array} \right), \]
which ends the proof of Equation \eqref{EqQZero}.
Equation \eqref{EqPZero} can be proved in a similar way.

\bigskip
Finally, from a stable polynomial $f$ in $\Sol$, we have constructed
an element $h=(h_m)_{m \ge 0}$ in $\Sol'$.
We denote by $\Phixp$ this map (from $\Sol$ to $\Sol'$).

A map $\Phipx$ from $\Sol'$ to $\Sol$ can be constructed 
in a similar way using the first part of Equation \eqref{EqChgtVar}.

\begin{lemma}
    One has $\Phixp \circ \Phipx = \Id_{\Sol'}$.
    Moreover, $\Phixp$ has the same kernel as $\ActY$,
    that is $\langle M_1(\X) \rangle$.
    \label{LemKerChgtCoord}
\end{lemma}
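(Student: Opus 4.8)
The plan is to establish the two claims separately. For the first claim, $\Phixp \circ \Phipx = \Id_{\Sol'}$, I would unwind the definitions of both maps in terms of the change of variables \eqref{EqChgtVar}. Starting from an element $h \in \Sol'$, the map $\Phipx$ produces a stable polynomial $f$ by substituting the first part of \eqref{EqChgtVar}, namely $p_i = x_{2i-1}-x_{2i}$ and $q_i = x_{2i}-x_{2i+1}$. Applying $\Phixp$ then substitutes the second part of \eqref{EqChgtVar} back in. The key observation is that the two parts of \eqref{EqChgtVar} are mutually inverse changes of variables once one accounts for the fact that interlacing coordinates satisfy $\sum_i (-1)^i x_i = 0$. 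Concretely, composing the two substitutions returns the original $p_i, q_i$, so the composition is the identity at the level of each $h_m$. I expect this direction to be essentially a bookkeeping verification, so I would state it crisply and not belabor the substitution algebra.

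\smallskip

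For the second claim, I would first recall that by Lemma~\ref{LemDefFp} and the computation in the proof of Proposition~\ref{PropKerActY}, the kernel of $\ActY$ is exactly $\langle M_1(\X)\rangle = \langle \sum_i (-1)^{i+1}x_i \rangle$. The strategy is to show that $\ker \Phixp = \ker \ActY$ by a two-sided inclusion, exploiting the fact that both maps arise from evaluating $f$ after a change of coordinates. The cleanest route is to observe that $\Phixp$ and $\ActY$ factor through the same information: the element $M_1(\X)$ corresponds, under \eqref{EqChgtVar}, to the linear relation $\sum_i (-1)^i x_i$, which is precisely the quantity that vanishes on genuine interlacing coordinates and which becomes identically zero after passing to the $(\pp,\qq)$ variables of length $m$. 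Indeed, substituting \eqref{EqChgtVar} into $\sum_{i=1}^{2m+1}(-1)^i x_i$ yields zero identically (the telescoping alternating sum collapses), so $M_1(\X)$, and hence every element of the ideal it generates, lies in $\ker\Phixp$.

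\smallskip

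For the reverse inclusion $\ker \Phixp \subseteq \langle M_1(\X)\rangle$, I would argue as follows. Suppose $f \in \Sol$ satisfies $\Phixp(f) = 0$, meaning $f_{2m+1}$ vanishes identically after the substitution \eqref{EqChgtVar} for every $m$. I would then apply the map $\Phi_{x \to \sum}$ from Lemma~\ref{LemDefFp}, writing $f_{2m+1}$ in terms of the new first coordinate $x'_1 = \sum_{i=1}^{2m+1}(-1)^i x_i$ together with $x_2,\dots,x_{2m+1}$. The change of variables \eqref{EqChgtVar} is exactly the one that sends $x'_1$ to zero while leaving the $p_i, q_i$ free; hence $\Phixp(f) = 0$ forces $f'_{2m+1}(0, x_2,\dots,x_{2m+1}) \equiv 0$ as a polynomial. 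This is precisely the situation reached in the proof of Proposition~\ref{PropKerActY}: it yields $f'_{2m+1} = x'_1 \cdot g'_{2m+1}$ for some polynomial $g'_{2m+1}$, and translating back gives $f_{2m+1} = \big(\sum_i (-1)^i x_i\big)\cdot g_{2m+1}$, with $(g_{2m+1})_{m\ge 0}$ assembling into a stable polynomial $g$. Therefore $f = M_1(\X)\cdot g \in \langle M_1(\X)\rangle$, as desired.

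\smallskip

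The main obstacle is the reverse inclusion, and specifically making precise that $\Phixp(f) = 0$ is equivalent to the polynomial identity $f'_{2m+1}(0,x_2,\dots,x_{2m+1}) \equiv 0$ rather than merely vanishing on integer points. This is what lets me invoke the factorization argument of Proposition~\ref{PropKerActY} at the level of polynomial rings. The crucial point is that \eqref{EqChgtVar} is an invertible affine change of variables from $(x_2,\dots,x_{2m+1})$ to $(p_1,\dots,p_m,q_1,\dots,q_m)$ once $x'_1$ is set to $0$; consequently $f'_{2m+1}(0,\cdot)$ vanishing as a function of $\pp,\qq$ is the same as its vanishing as a polynomial in $x_2,\dots,x_{2m+1}$. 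Once this dictionary is set up, both the identity $\Phixp\circ\Phipx=\Id$ and the kernel computation reduce to the already-established machinery, so the remaining work is verifying that the assembled $g$ is stable, which is routine exactly as in the proofs above.
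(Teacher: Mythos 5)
Your proof is correct, but the kernel half follows a genuinely different route from the paper's. For the identity $\Phixp\circ\Phipx=\Id_{\Sol'}$ you do what the paper does: check that applying the two halves of \eqref{EqChgtVar} in that order returns the original $p_i,q_i$ (your aside about the relation $\sum_i(-1)^ix_i=0$ is harmless but not needed in this direction; it is the reverse composition $\Phipx\circ\Phixp$ that is only a projector). For the statement $\Ker(\Phixp)=\Ker(\ActY)=\langle M_1(\X)\rangle$, the paper's reverse inclusion is a one-liner: if $\Phixp(f)=0$ then $f$ vanishes on every list of interlacing coordinates of a Young diagram, since each such list is the image of some $(\pp,\qq)$ under \eqref{EqChgtVar}; hence $\Ker(\Phixp)\subseteq\Ker(\ActY)$, and the sandwich $\langle M_1(\X)\rangle\subseteq\Ker(\Phixp)\subseteq\Ker(\ActY)=\langle M_1(\X)\rangle$ closes the argument by Proposition~\ref{PropKerActY}. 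You instead re-run the factorization machinery of Proposition~\ref{PropKerActY} inside $\Ker(\Phixp)$: Lemma~\ref{LemDefFp}, the observation that \eqref{EqChgtVar} becomes an invertible linear change of variables between $(x_2,\dots,x_{2m+1})$ and $(\pp,\qq)$ once the alternating sum is set to $0$, divisibility by $x'_1$, and stability of the quotient $g$. This is longer, but it buys a genuine simplification elsewhere: since $\Phixp(f)=0$ is already a polynomial identity, you can skip the density step of Proposition~\ref{PropKerActY} (vanishing at all decreasing sequences of negative integers implies identical vanishing), so your argument establishes $\Ker(\Phixp)=\langle M_1(\X)\rangle$ by pure polynomial algebra; you then invoke Proposition~\ref{PropKerActY} only to identify this ideal with $\Ker(\ActY)$, exactly as the lemma requires. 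Both proofs are complete; the paper's is shorter, yours is more self-contained at the level of polynomial identities.
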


\begin{proof}
    Fix $h \in \Sol'$.
    Let $f= \Phipx(h)$ and $\tilde{h}=\Phixp(f)$.
    By definition,
    \[\tilde{h}_m \left( \begin{array}{ccc} 
        \widetilde{p_1} & \dots & \widetilde{p_m} \\
        \widetilde{q_1} & \dots & \widetilde{q_m}
    \end{array} \right) = f(x_1,\dots,x_{2m+1}), \]
    where $x_1,\dots,x_{2m+1}$ are defined in terms of 
   $\widetilde{p_1},\dots,\widetilde{p_m},\widetilde{q_1},\dots,\widetilde{q_m}$
    using \eqref{EqChgtVar}.
    But 
    \[f(x_1,\dots,x_{2m+1}) = h_m \left( \begin{array}{ccc}
        p_1 & \dots & p_m \\     
        q_1 & \dots & q_m
    \end{array} \right),\]
    where $p_1,\dots,p_m,q_1,\dots,q_m$ are defined in terms of
    $x_1,\dots,x_{2m+1}$ using \eqref{EqChgtVar}.
Applying both relations \eqref{EqChgtVar} {\it in that order}, one
sees directly that for all $i\le m$, one has $p_i= \widetilde{p_i}$
    and $q_i=\widetilde{q_i}$.
    Hence $\tilde{h}_m$ and $h_m$ are the same polynomial,
    which proves the first part of the Lemma.

   For the second one, first observe that $\Phixp(M_1) =0$,
   thus $\langle M_1(\X) \rangle \subset \Ker(\ActY)$.
   Let us prove the opposite inclusion.
   Consider a stable polynomial $f$ such that $\Phixp(f)=0$.
   This implies that $f$ vanishes on all lists of interlacing coordinates of
   Young diagrams, that is $\ActY(f)=0$.
   In other terms, $\Ker(\Phixp) \subset \Ker(\ActY)$.
   The other implication is easy, as we already know by Proposition
   \ref{PropKerActY} that $\Ker(\ActY)=\langle M_1(\X) \rangle$.
\end{proof}

\begin{corollary}\label{CorolSolpEqQLa}
The composition $\ActY'=\ActY \circ \Phipx$ defines an isomorphism
from $\Sol'$ to $\QLa$.
\end{corollary}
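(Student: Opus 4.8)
The plan is to assemble \Cref{CorolSolpEqQLa} almost entirely from the structural results that have already been established, so the proof should be short. Recall the three maps in play: $\ActY \colon \Sol \to \FF(\Young,\C)$ (post-composition with the interlacing-coordinate map $\IC$), and the two change-of-coordinate maps $\Phipx \colon \Sol' \to \Sol$ and $\Phixp \colon \Sol \to \Sol'$. By definition $\QLa = \Im(\ActY)$, so to show that $\ActY' = \ActY \circ \Phipx$ is an isomorphism onto $\QLa$ it suffices to prove that $\ActY'$ is surjective onto $\QLa$ and injective.

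First I would settle surjectivity. Since $\QLa = \Im(\ActY)$, surjectivity of $\ActY'$ onto $\QLa$ will follow if $\Phipx$ is surjective onto $\Sol$, or more efficiently if $\Im(\ActY \circ \Phipx) = \Im(\ActY)$. The cleanest route is to use the first assertion of \Cref{LemKerChgtCoord}, namely $\Phixp \circ \Phipx = \Id_{\Sol'}$: this makes $\Phipx$ a section, hence injective, and shows $\Phixp$ is surjective onto $\Sol'$. For surjectivity of $\ActY'$ I would argue directly that every value of $\ActY$ is already attained through $\Phipx$. Given $f \in \Sol$, set $h = \Phixp(f) \in \Sol'$; then $\Phipx(h)$ and $f$ differ by an element of $\Ker(\Phixp)$, because $\Phixp(\Phipx(h)) = h = \Phixp(f)$ forces $f - \Phipx(h) \in \Ker(\Phixp) = \langle M_1(\X)\rangle$. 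Applying $\ActY$ and using that $\langle M_1(\X)\rangle \subseteq \Ker(\ActY)$ (Proposition~\ref{PropKerActY}), we get $\ActY(\Phipx(h)) = \ActY(f)$, so $\ActY'(h) = \ActY(f)$. As $f$ ranges over $\Sol$ this shows $\Im(\ActY') = \Im(\ActY) = \QLa$, giving surjectivity.

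Next I would settle injectivity of $\ActY'$ on $\Sol'$. Suppose $h \in \Sol'$ with $\ActY'(h) = \ActY(\Phipx(h)) = 0$, so $\Phipx(h) \in \Ker(\ActY)$. By the second assertion of \Cref{LemKerChgtCoord}, $\Ker(\Phixp) = \Ker(\ActY) = \langle M_1(\X)\rangle$, hence $\Phixp(\Phipx(h)) = 0$. But $\Phixp \circ \Phipx = \Id_{\Sol'}$, so $h = 0$. Therefore $\ActY'$ is injective. Combined with the surjectivity above, $\ActY' \colon \Sol' \to \QLa$ is a bijective algebra morphism, i.e.\ an isomorphism.

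The only genuine content is bookkeeping about which kernels and sections to invoke; no new computation is required, since everything rests on $\Phixp \circ \Phipx = \Id_{\Sol'}$ together with the identification of the two kernels from \Cref{LemKerChgtCoord} and \Cref{PropKerActY}. The one point to handle with care — the place where a careless argument would break — is the implication $\Phipx(h) \in \Ker(\ActY) \Rightarrow h = 0$ in the injectivity step: this works only because $\Ker(\ActY)$ coincides exactly with $\Ker(\Phixp)$, not merely contains it, which is precisely the equality supplied by \Cref{LemKerChgtCoord}. I would make sure to cite that equality rather than the weaker inclusion. Finally, I would remark that $\ActY'$ is an algebra morphism because both $\ActY$ and $\Phipx$ respect the product structures, so the bijection is automatically an algebra isomorphism.
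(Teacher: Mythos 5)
Your proof is correct and rests on exactly the same ingredients as the paper's: the identity $\Phixp \circ \Phipx = \Id_{\Sol'}$ and the equality $\Ker(\Phixp)=\Ker(\ActY)$ from \Cref{LemKerChgtCoord}. The paper packages these facts as ``$\Phipx\circ\Phixp$ is a projector, so $\Im(\Phipx)$ and $\Ker(\ActY)$ are complementary,'' while you unpack the same argument into direct injectivity and surjectivity checks for $\ActY'$ — the two are equivalent, and yours is equally valid.
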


\begin{proof}
    Thanks to the previous Lemma, $\Phipx$ is injective
    and by definition of $\QLa$, the map $\ActY$ is surjective.
    It remains to prove that $\Im(\Phipx)$ and $\Ker(\ActY)$
    are complementary subspaces.
   
    As $\Phixp \circ \Phipx = \Id_{\Sol'}$, the composition
    $\Phipx \circ \Phixp$ is a projector.
    Besides, $\Im(\Phipx\circ \Phixp)=\Im(\Phipx)$
    and $\Ker(\Phipx\circ \Phixp)=\Ker(\Phixp)=\Ker(\ActY)$ 
    (the last equality is the second statement of the previous Lemma).
    This ends the proof, as the image and kernel of a projector
    are complementary subspaces.
\end{proof}

The following diagram summarizes the morphisms considered so far:
\begin{equation}\label{EqDiagrammeCommutatif}
    \xymatrix{
\Sol \ar@{->>}[rr]^{\ActY} \ar@/_/@{->>}[rd] & & \QLa \\
& \Sol' \ar@/_/@{^{(}->}[lu] \ar[ru]^{\simeq}_{\ActY'}
  }
  \end{equation}
The two arrows between $\Sol$ and $\Sol'$ correspond respectively to $\Phixp$
and $\Phipx$.

\subsection{The basis $\H_I$}
\label{SubsectH}

In this Section, we exhibit a basis $\H_I$ of $\Sol'$ with the following nice properties
\begin{itemize}
    \item it has an explicit expression
        in terms of multirectangular coordinates ;
    \item if an element of $\Sol$ has an explicit expression interms of multirectangular coordinates,
        we can extract from it its $|H$ expansion.
    \item it is related to some basis of                       
        $\QSym$, whose product  is given by the shuffle of the parts of the 
        compositions indexing the elements, and which can be recognized as
        the one introduced by C.~Malvenuto and C.~Reutenauer in \cite{MR}.
\end{itemize}\medskip

It is here more convenient to work with the original multirectangular
coordinates, as defined by R.~Stanley: set $q'_i=q_i+q_{i+1}+\dots$ for all
$i \ge 1$.
With this change of variables, stable polynomials in the $\bm{q}$ and
$\bm{q'}$ are the same, \eqref{EqQZero} remains the same too, and
\eqref{EqPZero} becomes
\begin{equation}
    \left. h_m\left( \begin{array}{ccc} 
        p_1 & \dots & p_m \\
        q'_1 & \dots & q'_m
    \end{array} \right)\right|_{p_i=0}
    =
    h_{m-1}\left( \begin{array}{ccccccc}
        p_1&\dots&p_{i-1}&p_{i+1}&\dots&p_m\\
        q'_1&\dots&q'_{i-1}&q'_{i+1}&\dots&q'_m
    \end{array} \right) \label{EqPZeroBis}
\end{equation}

Let $I$ be a composition of $n$ with last part greater than $1$.
We define
\begin{equation}\label{EqDefHI}
\H_I\left( \begin{array}{ccc}
      p_1 & p_2 & \dots\\
      q_1 & q_2 & \dots
    \end{array} \right)
=\sum_{s \geq 1}
 \sum_{\gf{I=I_1\cdot I_2\cdots I_s}{k_1 < \dots < k_s}}
     \prod_t \frac{p_{k_t}^{\ell(I_t)}
             (q'_{k_t})^{|I_t|-\ell(I_t)}}{\ell(I_t)!}.
\end{equation}
The summation index $I=I_1\cdot I_2\cdots I_s$ means that we consider
all ways of writing~$I$ as a concatenation of $s$ non-empty compositions.
Here are a few examples (the arguments are omitted for readability):
for $v \ge 2$,
\begin{align}
    \H_{(v)}& =\sum_i p_i (q'_i)^{v-1}; \quad
    \H_{(u,v)}  = \sum_{i<j} p_i (q'_i)^{u-1} p_j (q'_j)^{v-1} + 
    \frac{1}{2} \sum_i p_i^2 (q'_i)^{u+v-2}.
\end{align}

\subsubsection{The $\H_I$ belong to $\Sol'$}

Note that, for any composition $I$ of length $\ell$,
\[\H_I = \sum_{k_1 < \dots < k_\ell} p_{k_1} (q'_{k_1})^{i_1-1}
\dots p_{k_\ell} (q'_{k_\ell})^{i_\ell-1} + \text{non $p$-square free terms.}\]
In particular, the $\H_I$ are linearly independent.

\begin{proposition}
    The functions $(\H_I)$, where $I$ runs over compositions with
last part greater than 1,
    form a basis of $\Sol'$.
\end{proposition}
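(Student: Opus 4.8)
The plan is to combine three ingredients: the linear independence of the $\H_I$ already established above, the membership $\H_I\in\Sol'$, and a dimension count. Since linear independence is in hand, it suffices to verify that each $\H_I$ satisfies the two defining equations \eqref{EqQZero} and \eqref{EqPZeroBis} of $\Sol'$, and then to check that in each degree the number of indexing compositions matches $\dim\Sol'$. Note first that $\H_I$ is homogeneous of degree $|I|$, since every monomial of \eqref{EqDefHI} has total $(p,q')$-degree $\sum_t\big(\ell(I_t)+(|I_t|-\ell(I_t))\big)=|I|$; this lets us argue degree by degree.

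I would first dispatch \eqref{EqPZeroBis}, which is immediate. Setting $p_i=0$ in \eqref{EqDefHI}, and using that every part $I_t$ of a composition is non-empty so that $\ell(I_t)\ge 1$, each factor attached to a used column $k_t$ carries $p_{k_t}$ to a positive power. Hence every term in which column $i$ is used vanishes, and the surviving terms are exactly those indexed by increasing sequences $k_1<\dots<k_s$ avoiding $i$; after reindexing the remaining columns these reproduce $\H_I$ in the deleted-column variables, which is the asserted identity.

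The main work is \eqref{EqQZero}. In the $q'$-coordinates, setting $q_i=0$ amounts to imposing $q'_i=q'_{i+1}=:Q$, and the equation asserts that $\H_I$ is unchanged when the two adjacent columns $(p_i,Q)$ and $(p_{i+1},Q)$ are replaced by the single column $(p_i+p_{i+1},Q)$. I would prove this by a contribution-preserving matching of the two expansions, the blocks on the other columns being untouched. On the merged side, a term in which the new column carries a block $J$ contributes $(p_i+p_{i+1})^{\ell(J)}Q^{|J|-\ell(J)}/\ell(J)!$; expanding by the binomial theorem and using $\binom{\ell(J)}{a}/\ell(J)!=1/(a!\,b!)$ for $a+b=\ell(J)$ yields exactly $\sum_{a+b=\ell(J)}p_i^a p_{i+1}^b Q^{|J|-\ell(J)}/(a!\,b!)$. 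On the two-column side, for each $(a,b)$ there is a unique way to split $J=J'\cdot J''$ into consecutive blocks with $\ell(J')=a$, $\ell(J'')=b$, assigning $J'$ to column $i$ and $J''$ to column $i+1$; since $q'_i=q'_{i+1}=Q$ and $(|J'|-\ell(J'))+(|J''|-\ell(J''))=|J|-\ell(J)$, this term contributes precisely $p_i^a p_{i+1}^b Q^{|J|-\ell(J)}/(a!\,b!)$, with the extreme cases $a=0$ and $b=0$ corresponding to placing all of $J$ on column $i+1$ or on column $i$ alone. This matches the two sides term by term and establishes \eqref{EqQZero}.

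Finally I would close with the count. There are $2^{n-1}$ compositions of $n$, of which those with last part equal to $1$ are in bijection (delete the trailing $1$) with the $2^{n-2}$ compositions of $n-1$; hence there are $2^{n-1}-2^{n-2}=2^{n-2}$ compositions of $n$ with last part greater than $1$, for $n\ge 2$. By Corollaries~\ref{CorolSolpEqQLa} and~\ref{CorolDimQLa}, the degree-$n$ component of $\Sol'$ also has dimension $2^{n-2}$. A linearly independent family of the correct cardinality in each graded component is a basis, which completes the proof. The only delicate point is the verification of \eqref{EqQZero}: it is precisely the matching of the factorial normalizations $1/\ell(J)!$ with the binomial coefficients and the unique concatenation splitting $J=J'\cdot J''$ that makes the merging work, and carrying out this bookkeeping correctly, including the $q'$-exponent accounting that relies on $q'_i=q'_{i+1}$, is where care is needed.
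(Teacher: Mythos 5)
Your verification of \eqref{EqQZero} only treats the interior case $i<m$: the whole argument is phrased as a merging of the two adjacent columns $(p_i,Q)$ and $(p_{i+1},Q)$ into $(p_i+p_{i+1},Q)$, which presupposes that column $i+1$ exists. But the definition of $\Sol'$ requires \eqref{EqQZero} for all $1\le i\le m$, and for $i=m$ the equation says something different: setting $q_m=0$ (equivalently $q'_m=0$) must \emph{erase} the last column entirely, \emph{i.e.} the dependence on $p_m$ must disappear. This boundary case is not a formal consequence of the interior cases, and it is precisely where the hypothesis ``last part of $I$ greater than $1$'' enters: a term of \eqref{EqDefHI} using column $m$ carries the factor $p_m^{\ell(I_s)}(q'_m)^{|I_s|-\ell(I_s)}/\ell(I_s)!$ for the final factor $I_s$ of the factorization, and this vanishes at $q'_m=0$ exactly because $I_s$ contains the last part of $I$, which is $\ge 2$, so that $|I_s|-\ell(I_s)\ge 1$. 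Your proof never invokes the hypothesis on the last part when checking membership in $\Sol'$, which is a warning sign: run as written, it would equally ``show'' that $\H_{(1)}=\sum_i p_i$ lies in $\Sol'$, yet $\H_{(1)}$ restricted to $q_m=0$ still contains $p_m$ and therefore violates \eqref{EqQZero} at $i=m$. (One can check that $\H_{(1)}$ satisfies \eqref{EqPZero} and all interior cases of \eqref{EqQZero}, so the boundary case is a genuinely independent constraint.)

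Everything else is sound and matches the paper's proof: the dimension count $2^{n-1}-2^{n-2}=2^{n-2}$ against Corollaries~\ref{CorolDimQLa} and \ref{CorolSolpEqQLa}, the immediate disposal of \eqref{EqPZero} (every monomial containing $q'_i$ also contains $p_i$), and the binomial/concatenation matching for $i<m$, which is exactly the paper's grouping of factorizations $f$ by their merged factorization $\overline{f}$ followed by Newton's formula. To close the gap, add the sentence-long argument above for $i=m$, as the paper does.
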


\begin{proof}
By Corollaries \ref{CorolDimQLa} and \ref{CorolSolpEqQLa}, the dimension
of the homogeneous component of degree $n$ is $2^{n-2}$,
which is exactly the number of functions $\H_I$ of degree $n$
(they are indexed by compositions with last part greater than $1$).
As the functions $(\H_I)_{I \vDash n}$ are linearly independent,
it is enough to prove that they indeed belong to $\Sol'$.\medskip

Fix a composition $I$.
It is straightforward to see that $(\H_I)$ satisfies \eqref{EqPZero}.
Indeed, all monomials that contain $q'_i$ also contain $p_i$.
Let us prove that $(\H_I)$ satisfies \eqref{EqQZero}.

Let $\ell$ be the length of $I$.
Assume that $q_j=0$ for some $j <m$, that is $q'_j=q'_{j+1}$.
Consider Equation \eqref{EqQZero} for $h=\H_I$ and rewrite both sides using
Equation \eqref{EqDefHI}.
The summands corresponding to a sequence of indices $\bm{k}$ that does not
contain $j$ or $j+1$ are the same on both sides.
So let us consider some factorization $f=(I_1,\cdots,I_s)$ of $I$
and some sequence $\bm{k}$ such that $k_t=j$ or $k_t=j+1$ for some $t$.

When $k_t=j$, it may happen that in addition $k_{t+1}=j+1$.
In this case, denote by $\overline{f}$ the factorization 
$(I_1,\dots,I_{t-1},I_t \cdot I_{t+1},\dots,I_s)$, otherwise
set $\overline{f}=f$.
We consider in $\H_I$ the summands corresponding to factorizations $f$
sent to a given factorization
$\overline{f}=(\overline{I_1},\dots,\overline{I_s})$.
We get (recall that we set $q'_{j+1}=q'_j$):
\[M (q'_j)^{|\overline{I_t}|-\ell(\overline{I_t})} \cdot \left(
\frac{p_j^{\ell(\overline{I_t})}}{\ell(\overline{I_t})!} + 
\sum_{\gf{r_1+r_2=\ell(\overline{I_t})}{r_1,r_2 \geq 1}}
\frac{p_j^{r_1}p_{j+1}^{r_2}}{r_1!r_2!}
+ \frac{p_{j+1}^{\ell(\overline{I_t})}}{\ell(\overline{I_{t+1}})!} \right),\]
where $M$ is some monomial in $p_k$ and $q'_k$ ($k\neq j,j+1$).
The first (resp. last) term corresponds to the case where $f=\overline{f}$
and $k_t=j$ (resp. $k_t={j+1}$).
The sum in the middle corresponds to the cases where $k_t=j$ {\em and}
$k_{t+1}=j+1$ and where $\overline{f}$ is obtained from $f$ by gluing two
non-trivial factors of respective lengths $r_1$ and $r_2$.

This expression simplifies {\it via} Newton's binomial formula to
\[ M (q'_j)^{|\overline{I_t}|-\ell(\overline{I_t})} 
\frac{(p_j+p_{j+1})^{\ell(\overline{I_t})}}{\ell(\overline{I_{t+1}})!}, \]
which is the term corresponding to the factorization $\overline{f}$ in the
right-hand side of \eqref{EqQZero}.
 
Finally, we get that $\H_I$ satisfies also \eqref{EqQZero} for $j<m$.
The case $j=m$ must be treated separately, as the column containing
$p_j+p_{j+1}$ in \eqref{EqQZero} does not exist.
However, in this case, it is immediate to check that $\H_I$ satisfies
\eqref{EqQZero} if the last part of $I$ is greater than $1$.
So $\H_I$ lies in $\Sol'$.
\end{proof}

\subsubsection{Some expansions on the basis $\H_I$.}

The fact that the $p$-square free terms of the $\H_I$ are distinct monomials
helps to compute the expansion of a given function on the $\H$-basis.
Here are a few examples.\bigskip

As $\Sol'$ is an algebra (the product of two solutions of \eqref{EqQZero}
and \eqref{EqPZero} is still a solution of these equations),
the product of two $\H_I$ is a linear combination of $\H_I$.
It turns out to have a very simple description.

\begin{proposition}\label{PropProdHI}
For any two compositions $I$ and $J$ (with their last part greater than $1$),
\[\H_I \cdot \H_J = \sum_{K \in I \shuffle J} \H_K,\]
where $I \shuffle J$ denotes the multiset of compositions obtained by
shuffling the {\em parts} of $I$ and $J$ (see Section \ref{SubsectDefQSym}).
\end{proposition}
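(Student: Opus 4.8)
The plan is to compute the product $\H_I \cdot \H_J$ directly from the defining formula~\eqref{EqDefHI} and to reorganize the resulting double sum into a single sum indexed by shuffles of the parts of $I$ and $J$. Since both sides are honest polynomials in the $p_k, q'_k$ and we already know (from the preceding Proposition) that the $\H_K$ form a basis of $\Sol'$, it suffices to match coefficients; but in fact I expect the matching to be exact and combinatorial rather than requiring a dimension count.

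First I would write out the product. For $\H_I$ we sum over a factorization $I = I_1 \cdots I_s$ and an increasing index sequence $k_1 < \dots < k_s$, with weight $\prod_t p_{k_t}^{\ell(I_t)} (q'_{k_t})^{|I_t|-\ell(I_t)} / \ell(I_t)!$; similarly for $\H_J$ with a factorization $J = J_1 \cdots J_{s'}$ and indices $k'_1 < \dots < k'_{s'}$. Multiplying gives a sum over all four data $(I_1\cdots I_s, \bm{k})$ and $(J_1\cdots J_{s'}, \bm{k'})$. The key step is to merge the two increasing sequences $\bm{k}$ and $\bm{k'}$ into a single increasing sequence of distinct indices, carefully handling \emph{collisions}, i.e.\ the case where some $k_t = k'_{t'}$, since at a shared index the two monomials in $p$ and $q'$ multiply together.

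At a collision at index $k$, a block $I_t$ (of length $a$, size $\alpha$) meets a block $J_{t'}$ (of length $b$, size $\beta$), producing the factor $\frac{1}{a!}\frac{1}{b!} p_k^{a+b} (q'_k)^{(\alpha-a)+(\beta-b)}$. This must be compared with a single $\H_K$-block at $k$ built from a composition $L$ of length $a+b$ and size $\alpha+\beta$, whose weight is $\frac{1}{(a+b)!} p_k^{a+b} (q'_k)^{\alpha+\beta-(a+b)}$. The powers of $p_k$ and $q'_k$ already agree, so the identity I need at each collision is purely numerical: the number of ways to interleave the $a$ parts of $I_t$ and the $b$ parts of $J_{t'}$ into a composition $L$, weighted correctly, reproduces the factor $\frac{1}{a!b!}$ against $\frac{1}{(a+b)!}$ --- which is exactly the shuffle of two sequences of lengths $a$ and $b$ having $\binom{a+b}{a} = \frac{(a+b)!}{a!b!}$ terms. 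Thus summing $\frac{1}{(a+b)!}$ over the $\binom{a+b}{a}$ shuffles of the parts of $I_t$ with the parts of $J_{t'}$ yields precisely $\frac{1}{a!b!}$, matching the collision factor. Away from collisions, where an index carries a block from only one of $I$ or $J$, the weights already coincide termwise, corresponding to interleaving whole blocks.

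The main obstacle, and the step I would write most carefully, is bookkeeping the bijection between (a) pairs of factorizations-with-indices that get merged and (b) factorizations-with-indices of a shuffled composition $K \in I \shuffle J$. Concretely, a term on the right indexes a composition $K$, a factorization $K = K_1 \cdots K_u$, and an increasing sequence; I must show each such datum arises exactly once, with matching coefficient, from the left-hand expansion. The cleanest route is to observe that at each index $k$ used on the right, the block $K_v$ sitting there is a shuffle of one subword of $I$'s parts with one subword of $J$'s parts (possibly empty on one side), and that the factorial weight $1/\ell(K_v)!$ distributes over these shuffles exactly as computed above. Because the shuffle product on parts is associative and the parts of $I$ and $J$ retain their relative internal order, assembling the per-index shuffles recovers the global shuffle $I \shuffle J$ with each $K$ appearing with its correct multiplicity. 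I would present this as a single coefficient-comparison argument, invoking the binomial identity $\sum \frac{1}{(a+b)!} = \binom{a+b}{a}\frac{1}{(a+b)!} = \frac{1}{a!b!}$ at each collision to close the computation.
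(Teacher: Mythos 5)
Your proof is correct, but it takes a genuinely different route from the paper's. The paper's argument (given only as a sketch) is structural: since $\Sol'$ is an algebra, $\H_I \cdot \H_J$ lies in $\Sol'$; since the $\H_K$ form a basis of $\Sol'$ whose $p$-square free parts are linearly independent monomial sums, an element of $\Sol'$ is determined by its $p$-square free terms; and the $p$-square free terms of $\H_I \cdot \H_J$ --- which arise only from pairs of index sequences with \emph{disjoint} supports --- visibly coincide with those of $\sum_{K \in I \shuffle J} \H_K$. All collision terms (shared indices $k_t = k'_{t'}$) are thereby discarded for free, being non-$p$-square-free. Your expansion instead establishes the statement as a bare polynomial identity, so you must handle the collisions, and you do so correctly: the merged-block weight $\frac{1}{a!}\frac{1}{b!}$ on the left matches the $\binom{a+b}{a}$ interleavings of the colliding blocks' parts on the right, each weighted $\frac{1}{(a+b)!}$, via $\binom{a+b}{a}\frac{1}{(a+b)!} = \frac{1}{a!\,b!}$, while the $q'_k$-exponents agree since $(\alpha-a)+(\beta-b) = (\alpha+\beta)-(a+b)$; and the block-by-block decomposition of a factorization of a shuffle into consecutive subwords of $I$'s and $J$'s parts gives the required weighted many-to-one matching. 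What each approach buys: yours is self-contained and needs neither the basis property of the $\H_K$ nor the fact that $\Sol'$ is closed under multiplication (indeed it makes no reference to $\Sol'$ at all), whereas the paper's is far shorter but leans entirely on the preceding proposition. One small point worth making explicit in your write-up: every $K \in I \shuffle J$ again has last part greater than $1$ (its last part is the last part of $I$ or of $J$), so each $\H_K$ appearing on the right-hand side is indeed one of the defined functions.
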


As this result has already appeared (under a different form) in the literature 
-- see Section \ref{SubSectConfusingAlphabets} --
and as we do not use it in this paper, we only sketch its proof.

\begin{proof}[Sketch of proof]
Both sides of the equality lie in $\Sol'$ and share the same $p$-square free
terms. As the $\H_I$ form a basis of $\Sol'$ and have linearly independent
$p$-square free terms, this is enough to conclude.
\end{proof}

It is also possible to obtain the $\H$-expansions of the functions
$\Phixp \big(M_k(\X)\big)$, which are generators of $\Lambda$.

\begin{proposition}
    For any $k\ge 2$, one has
    \[ \Phixp \big(M_k(\X)\big)
    = \sum_{i=0}^{k-2} (-1)^{i+1} k(k-1)\cdots(k-i+1) \H_{1^i,k-i}.\]
\end{proposition}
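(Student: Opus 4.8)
The plan is to argue exactly as in the proof of Proposition~\ref{PropProdHI}, by comparing $p$-square-free parts. Recall that the $\H_I$ form a basis of $\Sol'$ and that the $p$-square-free part of $\H_I$, for $I$ of length $\ell$, is the sum $\sum_{k_1<\cdots<k_\ell}\prod_t p_{k_t}(q'_{k_t})^{i_t-1}$ of pairwise distinct monomials; moreover, since every composition indexing a basis element has last part $>1$, none of these monomials is free of the variables $q'$. Consequently the linear map sending an element of $\Sol'$ to the part of its $p$-square-free expansion that involves at least one $q'_i$ is injective. As $M_k(\X)\in\Sol$, its image $\Phixp(M_k(\X))$ lies in $\Sol'$, so it suffices to compute this distinguished part of $\Phixp(M_k(\X))$ and to check that it coincides with the corresponding part of the right-hand side.

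First I would substitute \eqref{EqChgtVar} into $M_k(\X)=\sum_j(-1)^jx_j^k$. Writing $q'_i=q_i+q_{i+1}+\cdots$, one has $x_{2i}=q'_i-(p_1+\cdots+p_i)$ and $x_{2i-1}=q'_i-(p_1+\cdots+p_{i-1})$, hence $x_{2i}=x_{2i-1}-p_i$, and pairing consecutive terms gives
\[\Phixp\big(M_k(\X)\big)=\sum_{i\ge1}\big(x_{2i}^k-x_{2i-1}^k\big)-x_{2m+1}^k.\]
The boundary term satisfies $x_{2m+1}=-(p_1+\cdots+p_m)$, so its $k$-th power is free of the $q'$ and may be discarded for our purpose. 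In each difference I would expand by the binomial theorem, $x_{2i}^k-x_{2i-1}^k=\sum_{j\ge1}\binom{k}{j}(-p_i)^jx_{2i-1}^{k-j}$; $p$-square-freeness kills all $j\ge2$, leaving $-k\,p_i\,x_{2i-1}^{k-1}$.

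It then remains to expand $x_{2i-1}^{k-1}=\big(q'_i-(p_1+\cdots+p_{i-1})\big)^{k-1}$ by the multinomial theorem and to retain the terms linear in distinct variables $p_{k_1},\dots,p_{k_s}$ (with $k_1<\cdots<k_s<i$) that carry a positive power of $q'_i$. Each such term equals $p_{k_1}\cdots p_{k_s}\,p_i\,(q'_i)^{k-1-s}$; its total coefficient, obtained from the multinomial coefficient, the signs of the $-p_l$, and the prefactor $-k$, is the signed falling factorial attached to $\H_{1^s,k-s}$ in the statement, and summation over $i$ rebuilds exactly the $p$-square-free part of $\H_{1^s,k-s}$ (compare \eqref{EqDefHI}). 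The index range $s=0,\dots,k-2$ is forced because $s=k-1$ would make the exponent $k-1-s$ of $q'_i$ vanish, producing a $q'$-free monomial outside the distinguished part. Matching the distinguished parts of the two sides and invoking injectivity then gives the identity. The only genuinely delicate point is the treatment of the $q'$-free monomials: the multinomial expansions at $s=k-1$ together with the discarded boundary power $x_{2m+1}^k$ both produce such monomials, which must cancel since $\Phixp(M_k(\X))$ lies in $\Sol'$; working from the start only with monomials carrying a $q'$ sidesteps this, reducing the proof to the routine identification of multinomial coefficients with falling factorials.
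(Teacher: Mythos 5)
Your strategy coincides with the paper's own proof, which is exactly the sketch ``both sides lie in $\Sol'$ and share the same $p$-square-free terms'': since the $\H_I$ form a basis of $\Sol'$ whose $p$-square-free parts are linearly independent, an element of $\Sol'$ is determined by that part. Your further projection onto the $p$-square-free monomials containing at least one $q'_i$ is legitimate (every basis element is indexed by a composition with last part $>1$, so each of its $p$-square-free monomials carries a positive power of some $q'$), and it is a genuinely tidy way to dispose of the $q'$-free terms coming from the boundary power $x_{2m+1}^k=\bigl(-(p_1+\cdots+p_m)\bigr)^k$ and from the $s=k-1$ multinomial terms, which the paper's ``lengthy but straightforward computation'' would otherwise have to watch cancel. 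The reduction $x_{2i}^k-x_{2i-1}^k\mapsto -k\,p_i\,x_{2i-1}^{k-1}$ and the identification of the surviving monomials with the $p$-square-free monomials of $\H_{1^s,k-s}$ are also correct.

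The gap is the one step you assert instead of computing, and it is the entire content of the proposition. Carrying out your own outline, the coefficient of $p_{k_1}\cdots p_{k_s}\,p_i\,(q'_i)^{k-1-s}$ is
\[
(-k)\cdot\binom{k-1}{s}\cdot(-1)^s\cdot s!\;=\;(-1)^{s+1}\,k(k-1)\cdots(k-s),
\]
a product of $s+1$ factors, whereas the coefficient printed in the statement, $k(k-1)\cdots(k-i+1)$, has only $i$ factors; the two differ by the factor $(k-i)$. A one-line sanity check at $k=2$: substituting \eqref{EqChgtVar} gives $-q_1^2+(q_1-p_1)^2-p_1^2=-2p_1q_1$, i.e.\ $\Phixp\bigl(M_2(\X)\bigr)=-2\H_{(2)}$ (consistent with $M_2(\X)$ being $-2|\lambda|$ and $\H_{(2)}$ being $|\lambda|$), while the printed formula gives $-\H_{(2)}$. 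So the computation your proof outlines actually establishes
\[
\Phixp\bigl(M_k(\X)\bigr)=\sum_{i=0}^{k-2}(-1)^{i+1}\,k(k-1)\cdots(k-i)\,\H_{1^i,k-i},
\]
which points to an off-by-one misprint in the paper's falling factorial; in any case your claim that the computed coefficient ``is the signed falling factorial attached to $\H_{1^s,k-s}$ in the statement'' is false for the statement as printed. Writing the coefficient out explicitly is precisely the verification the proposition calls for, and omitting it is what allowed this discrepancy to slip through.
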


\begin{proof}[Sketch of proof]
A lengthy but straightforward computation shows that both sides share the same
$p$-square free terms.
As both sides lie in $\Sol'$, this is enough to conclude.
\end{proof}

Another interesting family of functions on Young diagrams is the one that
originally motivated Kerov's  and Olshanski's work
on symmetric functions on Young diagrams \cite{KO94}: 
fix a partition $\mu$ and define
\[\Ch_\mu(\la) =
\begin{cases}
    |\la| (|\la| -1) \dots (|\la| - |\mu|+1) \, \hat{\chi}^\la_{\mu 1^{|\la|-|\mu|}} 
    & \text{ if } |\la| \ge |\mu| ; \\
    0 & \text{ if } |\la| < |\mu|,
\end{cases}\]
where $\hat{\chi}^\la_{\rho}$ denotes the 
normalized irreducible character values of the symmetric group
({\em normalized} means divided by the dimension).
Kerov and Olshanski have shown that, for any partition $\mu$,
the function $\Ch_\mu$ lies in $\Lambda$.
Hence it lies also in $Q\Lambda \simeq \Sol'$.

A combinatorial interpretation for the coefficients of these functions written
in terms of coordinates $\bm{p}$ and $\bm{q'}$ is given in
\cite{FerayPreuveStanley}.
Using the material here, we can directly deduce from it a combinatorial
interpretation for the coefficients of their $\H$-expansion.

Fix an integer $k \ge 1$, two permutations $\si$ and $\tau$ in
the symmetric group $S_k$ and $\varphi$ a bijection from the set
$C(\si)$ of cycles of $\si$ to $\{1,\dots,|C(\si)|\}$.
Then, for a cycle $c'$ of $\tau$ we define
\[\psi(c') = \max_{\gf{c \in C(\si)} {c \cap c' \neq \emptyset}} \varphi(c)\]
and the composition $I_{(\si,\tau)}^\varphi$ of length $|C(\si)|$
whose $j$-th part is $1+|\psi^{-1}(j)|$.
\begin{proposition}
    Fix a partition $\mu$ of length $k$ and 
    choose arbitrarily a permutation $\pi$ in $S_k$ of cycle-type $\mu$.
    Then
    \[\Ch_\mu = \sum_{\gf{(\sigma,\tau) \in S_k}{\sigma \, \tau =\pi}}
    \varepsilon(\tau) \sum_{\gf{\varphi \text{ bijection}}{C(\si) \to \{1,\dots,|C(\si)|\}}}
    \H_{I_{(\si,\tau)}^\varphi}.\]
\end{proposition}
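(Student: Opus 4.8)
The plan is to start from the combinatorial expansion of $\Ch_\mu$ in the multirectangular coordinates $\mathbf{p}$ and $\mathbf{q'}$ proved in \cite{FerayPreuveStanley}, and then to read off the $\H$-expansion by the same ``$p$-square free part'' technique already used in the proofs of Proposition \ref{PropProdHI} and of the preceding proposition on $\Phixp\big(M_k(\X)\big)$. Recall that the cited result expresses $\Ch_\mu$, as a polynomial in $\mathbf{p},\mathbf{q'}$, in the form
\[
\Ch_\mu = \sum_{\substack{(\sigma,\tau)\\ \sigma\tau=\pi}} \varepsilon(\tau)
\sum_{\phi\colon C(\sigma)\to\Z_{>0}}\ \prod_{c\in C(\sigma)} p_{\phi(c)}
\prod_{c'\in C(\tau)} q'_{\max_{c\cap c'\neq\emptyset}\phi(c)},
\]
the inner sum running over \emph{all} functions $\phi$ from the cycles of $\sigma$ to the positive integers. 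First I would record that both sides of the claimed identity lie in $\Sol'$: the left-hand side because $\Ch_\mu\in\Lambda\subseteq\QLa\simeq\Sol'$, and the right-hand side because it is a linear combination of the basis elements $\H_I$. Since the $\H_I$ form a basis of $\Sol'$ whose $p$-square free parts are pairwise distinct monomials, the map sending an element of $\Sol'$ to its $p$-square free part is injective; hence it suffices to check that the two sides share the same $p$-square free part.

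Next I would extract the $p$-square free part of the formula above. A monomial $\prod_{c} p_{\phi(c)}$ is $p$-square free precisely when $\phi$ is injective, so only injective colorings contribute. Fix a factorization $\sigma\tau=\pi$, write $\ell=|C(\sigma)|$, and parametrize an injective $\phi$ by its image $\{k_1<\dots<k_\ell\}$ together with the bijection $\varphi\colon C(\sigma)\to\{1,\dots,\ell\}$ defined by $\phi(c)=k_{\varphi(c)}$. With $\psi(c')=\max_{c\cap c'\neq\emptyset}\varphi(c)$ as in the statement, the monotonicity of $j\mapsto k_j$ gives $\max_{c\cap c'\neq\emptyset}\phi(c)=k_{\psi(c')}$, so the monomial produced by $\phi$ equals
\[
\prod_{j=1}^{\ell} p_{k_j}\,(q'_{k_j})^{|\psi^{-1}(j)|}.
\]
This is exactly the term indexed by $k_1<\dots<k_\ell$ in the $p$-square free part of $\H_{I_{(\sigma,\tau)}^\varphi}$, because the $j$-th part of $I_{(\sigma,\tau)}^\varphi$ is $1+|\psi^{-1}(j)|$, so that the exponent of $q'_{k_j}$ is $i_j-1=|\psi^{-1}(j)|$.

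Summing over all injective $\phi$ — equivalently over all choices of $\varphi$ and of $k_1<\dots<k_\ell$ — I would obtain that the contribution of the factorization $(\sigma,\tau)$ to the $p$-square free part is $\varepsilon(\tau)\sum_{\varphi}\big(\text{$p$-square free part of }\H_{I_{(\sigma,\tau)}^\varphi}\big)$. Summing over all factorizations $\sigma\tau=\pi$ then shows that the $p$-square free part of $\Ch_\mu$ coincides with that of the right-hand side of the proposition, and the injectivity noted above closes the argument.

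The main obstacle is the bookkeeping in the middle step: one must check carefully that the ``max-coloring'' $\psi$ attached to the cycles of $\tau$ behaves identically under an injective coloring $\phi$ and under its order type $\varphi$, so that the exponent of each $q'_{k_j}$ really is $|\psi^{-1}(j)|=i_j-1$. Once this compatibility is verified, the remainder is the now-standard comparison of $p$-square free parts inside $\Sol'$.
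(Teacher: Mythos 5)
Your proposal is correct and follows exactly the paper's own argument: the paper's proof is a two-line sketch invoking the main result of \cite{FerayPreuveStanley}, the coincidence of $p$-square free terms, and membership of both sides in $\Sol'$, which is precisely your strategy. Your write-up simply makes explicit the details the paper leaves implicit (injective colorings $\phi$ parametrized by an increasing sequence $k_1<\dots<k_\ell$ plus a bijection $\varphi$, and the compatibility $\max_{c\cap c'\neq\emptyset}\phi(c)=k_{\psi(c')}$), and these details check out.
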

\begin{proof}
    From the main result of \cite{FerayPreuveStanley} and the definition of $\H_I$,
    it is clear that the $p$-square free terms of both sides coincide.
    As both sides lie in $\Sol'$, the equality holds.
\end{proof}
\begin{example}{\rm
    Consider $\mu=(3)$ (thus $k=3$) and choose $\pi=(1\ 2\ 3)$.
    Then $\pi$ has 6 factorizations in two factors in $S_3$:
    $\pi=(1\ 2\ 3) \, \Id_3$ yields a term $\H_{(4)}$,
    the three factorizations as product of two transpositions yield each
    $-\H_{(1,3)}-\H_{(2,2)}$, the factorization $\pi=\Id_3 \, (1\ 2\ 3)$
    yields a term $6\H_{(1,1,2)}$ and finally $\pi=(1\ 3\ 2)^2$ yields $\H_{(2)}$.
    So we get the $\H$-expansion of $\Ch_{(3)}$:
    \[ \Ch_{(3)} = \H_{(4)}-3\H_{(1,3)}-3\H_{(2,2)}+6\H_{(1,1,2)}+\H_{(2)}.\]
    }
\end{example}
\label{SubsubSectHExp}
\subsubsection{Collapsing the alphabets.}
\label{SubSectConfusingAlphabets}

Consider now the morphism which sends $p_j$ and $q'_j$ to the same variable
$y_j$. The image of $\H_I$ is the following quasi-symmetric function of the
alphabet $Y$
\begin{equation}\label{HIX}
    \sum_{s \geq 1} \sum_{I=I_1\cdot I_2\cdots I_s}
    \prod_t \frac{1}{\ell(I_t)!}\, M_{|I_1|,|I_2|,\dots,|I_s|}(Y).
\end{equation}
Call $H_I(Y)$ this function.
Then, $H_I$, when $I$ runs over all compositions,
form a basis of $\QSym$ since the transition matrix
with the monomial basis is triangular.
Proposition \ref{PropProdHI} implies that their product is given by 
the shuffle operation on the parts of the compositions.

These functions already appear in \cite[Equation (2.12)]{MR},
where they are defined as the dual basis of some noncommutative symmetric
functions denoted by $\Phi^I$ in \cite{NCSF1} (analogues of power sums).
The shuffle  property is clear in this context, as the $\Phi^I$
form a multiplicative basis on primitive generators.

\subsection{The functions $N_G$}\label{SubsectNG}

In this Section, we study a family of functions on Young diagrams
indexed by bipartite graphs with two types of edges.

Let $G$ be an unlabelled bipartite graph with vertex set
$V=V_1 \sqcup V_2$ and edge set $E=E_{1,2} \sqcup E_{2,1} \subset V_1 \times V_2$.
We consider the polynomial $N_G$ in the variables $p_i$ and $q_i$
 defined as follows:
\[ N_G\left( \begin{array}{ccc}
    p_1 & p_2 & \dots\\
    q_1 & q_2 & \dots
\end{array} \right) = \sum_{r} \left( \prod_{v_1 \in V_1} p_{r(v_1)}
\prod_{v_2 \in V_2} q_{r(v_2)} \right),\]
where the sum runs over functions $r : V \to \N$ satisfying
the following {\em order condition} (the large and strict inequalities are important!):
\begin{itemize}
    \item for each edge $e=(v_1,v_2)$ in $E_{1,2}$, one has $r(v_1) \leq r(v_2)$ ;
    \item for each edge $e=(v_1,v_2)$ in $E_{2,1}$, one has $r(v_2) < r(v_1)$.
\end{itemize}

\begin{example}{\rm
    Consider the graph $G_\ex$ drawn on Figure \ref{fig:example_unlabelled_graph}.
    Vertices in $V_1$ (resp. $V_2$) are drawn in white (resp. black).
    edges in $E_{1,2}$ (resp. $E_{2,1}$) are represented directed from their extremity in 
    $V_1$ to their extremity in $V_2$ (resp. from their extremity in $V_2$
    to their extremity in $V_1$).

    \begin{figure}
        \[\begin{tikzpicture}
            \tikzstyle{bv}=[circle,fill=black,inner sep=0pt,minimum size=2mm]
            \tikzstyle{wv}=[circle,draw=black,inner sep=0pt,minimum size=2mm]
            \node[wv] (v1) at (0,0) { };
            \node[wv] (v2) at (0,-1) { };
            \node[bv] (v3) at (1,0) { };
            \node[bv] (v4) at (1,-1) { };
            \node[wv] (v5) at (2,-.5) { };
            \node[bv] (v6) at (3,-.5) { };
            \draw[->] (v1) -- (v3);
            \draw[->] (v1) -- (v4);
            \draw[->] (v2) -- (v3);
            \draw[->] (v2) -- (v4);
            \draw[->] (v3) -- (v5);
            \draw[->] (v4) -- (v5);
            \draw[->] (v5) -- (v6);
        \end{tikzpicture}\]
        \caption{Example of an unlabelled bipartite graph $G$.}
        \label{fig:example_unlabelled_graph}
    \end{figure}
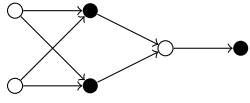

    Let $r$ be a function from its vertex set to $\N$.
    Denote by $e$ and $f$ the images of the leftmost white vertices, 
    by $g$ and $h$ the images of the black vertices just to their right,
    then by $i$ the image of the white vertex to their right and finally by $j$ 
    the image of the rightmost black vertex.

    Then, by definition, $r$ satisfies the order condition if and only if
    \[e,f \le g,h < i \le j.\]
Note the alternating large and strict inequalities.
Finally, one has
\[N_{G_\ex}\left( \begin{array}{ccc}
        p_1 & p_2 & \dots\\       
            q_1 & q_2 & \dots         
        \end{array} \right) = \sum_{e,f \le g,h < i \le j}
        p_e p_f p_i q_g q_h q_j.\]
    }
\end{example}

\begin{remark}[Why are we considering this family ?]{\rm
In the case where we have only edges of the first type, that is $E_{2,1} = \emptyset$,
the functions have played an important role in some recent works on
irreducible character values of symmetric groups;
indeed, the function $\Ch_\mu$ defined in \cref{SubsubSectHExp}
writes in a combinatorial way as a sum of $N_G$ functions,
see {\em e.g.} \cite[Theorem 2]{FS11}.
This expansion has proved useful for studying the asymptotics of $\Ch_\mu$
\cite{FS11} and to answer a question of Kerov on the expansion of $\Ch_\mu$
in the so-called {\em free cumulant} basis of $\Lambda$ 
see~\cite{Fer09,DFS10}.

The extension to the case $E_{2,1} \neq \emptyset$ will be useful in 
Section~\ref{subsec:explicit_family}.
}
\end{remark}

\begin{lemma}
\label{LemNGInSol}
Let $G$ be a bipartite graph as above.
Assume that each element in $V$ is the extremity
of at least one edge in $E_{1,2}$.
Then the polynomial $N_G$ belongs to $\Sol'$.
\end{lemma}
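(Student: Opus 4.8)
The plan is to verify directly that $N_G$ satisfies the two defining relations \eqref{EqQZero} and \eqref{EqPZero} of $\Sol'$. Since $N_G$ is homogeneous (of degree $|V|$) and stable, it suffices to compare, at each level $m$ and each index $i$, the two sides of these equations as polynomials in the remaining variables. The starting observation is that substituting $p_i=0$ kills exactly the labelings $r$ for which some \emph{white} vertex takes the value $i$ (these are the only ones producing a factor $p_i$), while substituting $q_i=0$ kills exactly the labelings for which some \emph{black} vertex takes the value $i$. I would therefore read each side of \eqref{EqQZero} and \eqref{EqPZero} as a sum over admissible labelings and construct an explicit weight-preserving bijection between them.

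For \eqref{EqPZero} with $1<i<m$ (the ``interior'' case), the left-hand side is the sum over labelings avoiding the value $i$ on white vertices, while on the right-hand side the merged coordinate $q_{i-1}+q_i$ appears; expanding this product by the distributive law lets each black vertex sitting at the merged level choose independently between $q_{i-1}$ and $q_i$. I would match this free binary choice with the two old values $i-1$ and $i$ available to a black vertex, and send all the other vertices through the order-preserving ``gap insertion'' $j\mapsto j$ for $j\le i-1$ and $j\mapsto j+1$ for $j\ge i$. The content of the argument is that this lift is a bijection onto the admissible old labelings which respects the order condition for \emph{every} value of the binary data. The one delicate point is that the associated old-to-new map collapses the two levels $i-1,i$ to a single level, which could destroy a \emph{strict} inequality; but a strict inequality comes only from an edge of $E_{2,1}$, where it constrains a black vertex strictly below a white one, and the white partner is forbidden the value $i$ by the substitution $p_i=0$, so strictness is preserved. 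Inequalities coming from $E_{1,2}$ are large and survive because all the relevant maps are monotone.

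The interior case of \eqref{EqQZero} is entirely parallel: now it is black vertices that avoid the value $i$, the merged coordinate is $p_i+p_{i+1}$, and the white vertices landing at the merged level carry the binary choice between $i$ and $i+1$; the same monotonicity-versus-strictness bookkeeping (with the two edge types playing their same roles) yields the matching bijection.

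The real obstacle, and the only place where the hypothesis on $G$ is used, is the boundary: $i=m$ in \eqref{EqQZero} and $i=1$ in \eqref{EqPZero}. There no merging occurs --- the extreme column is simply erased --- so the identity reduces to showing that no surviving labeling places a vertex of the forbidden colour at the extreme level. This is exactly what the assumption that every vertex is an endpoint of an edge of $E_{1,2}$ guarantees. Indeed, setting $q_m=0$ forbids black vertices at level $m$; if some white vertex $v_1$ were at level $m$, an incident edge $(v_1,v_2)\in E_{1,2}$ would force $r(v_2)\ge r(v_1)=m$, i.e.\ $r(v_2)=m$, contradicting that black vertices avoid $m$. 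Symmetrically, setting $p_1=0$ forbids white vertices at level $1$, while an incident edge $(v_1,v_2)\in E_{1,2}$ at a hypothetical black vertex $v_2$ at level $1$ would force $r(v_1)\le 1$, again impossible. Hence in both boundary cases all surviving labelings live on levels $\{1,\dots,m-1\}$ (resp.\ $\{2,\dots,m\}$), and the two sides agree after the obvious relabeling. Combining the interior and boundary cases for both equations shows $N_G\in\Sol'$.
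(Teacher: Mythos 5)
Your proof is correct and follows essentially the same route as the paper's: a collapse/lift correspondence between labelings, where the $i$-avoidance forced by the vanishing coordinate is exactly what protects the order condition (strict inequalities from $E_{2,1}$ in one direction, large ones from $E_{1,2}$ in the other), and where the hypothesis that every vertex meets an edge of $E_{1,2}$ is invoked only in the boundary cases. The only cosmetic differences are that the paper phrases the bijection via the collapse map and its preimages (summing weights of preimages to produce the merged coordinate $p_i+p_{i+1}$, resp.\ $q_{i-1}+q_i$) whereas you phrase it in the lifting direction, and that you treat both boundary cases explicitly where the paper proves one and declares the other similar.
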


\begin{proof}
    Let us check that $N_G$ satisfies equation~\eqref{EqQZero}. 
    We define 
    \[ \begin{cases}
        p'_j =p_j & \text{ if }j<i ;\\
        p'_i =p_i+p_{i+1} ;& \\
        p'_j =p_{j+1} & \text{ if }j>i ;
    \end{cases} \qquad
    \begin{cases}  
        q'_j =q_j & \text{ if }j<i ;\\
        q'_j =q_{j+1} & \text{ if }j \ge i.
    \end{cases}\]
    These are the variables in the right-hand side of~\eqref{EqQZero}.
    Consider first the left-hand side:
    \[N_G \left.\left( \begin{array}{ccc}                               
            p_1 & \dots & p_m\\                                       
                q_1 & \dots &q_m                                       
            \end{array} \right)\right|_{q_i=0} = \sum_{r} \left( \prod_{v_1 \in V_1} p_{r(v_1)} 
            \prod_{v_2 \in V_2} q_{r(v_2)} \right),\]
where the sum runs over functions $r : V \to \{1,\dots,m\}$ satisfying the order condition.
Besides, one can restrict the sum to functions $r$ such that
$r(v_2) \neq i$ for any $v_2 \in V_2$
(we call them $i$-avoiding functions).

If $i=m$, an $i$-avoiding function $r$ which satisfies the order condition
cannot associate $m$ with a vertex in $V_1$
(indeed, as each vertex $v_1$ in $V_1$ is the extremity of at least one edge
$(v_1,v_2)$ in $E_{1,2}$, so that  $r(v_1) \le r(v_2) <m$).
Therefore equation~\eqref{EqQZero} is satisfied for $i=m$.

Let us consider now the case $i<m$.
With any function $r$, we associate a function $r'=\Phi(r) : V \to \{1,\dots,m-1\}$ defined as follows:
\[r'(v) = r(v) \text{ if }r(v) \le i \text{ and } r'(v) = r(v)-1 
\text{ if }r(v) > i.\]
It is straightforward to check that, if $r$ is $i$-avoiding, $r'$ satisfies the order condition.
Indeed, the only problem which could occur is that $r(v_2)=i$ and $r(v_1)=i+1$
for an edge $(v_1,v_2)$ in $E_{2,1}$, but we forbid $r(v_2)=i$.

The preimage  of a given function $r'$ is obvious:
it is the set of functions $r$ with
\[\begin{cases}
    r(v) = r'(v) & \text{ if }r'(v) < i ;\\
    r(v) \in \{i;i+1\} & \text{ if }r'(v) = i ;\\
    r(v)= r'(v)+1 & \text{ if }r'(v) > i.
\end{cases}\]
If $r'$ satisfies the order condition, all its $i$-avoiding pre-images $r$ 
also satisfy the order condition.
Once again, the only obstruction to this would be in the case when $r'(v_1)=r'(v_2)=i$
for some edge $(v_1,v_2)$ in $E_{1,2}$.
In this case, preimages $r$ with $r(v_1)=i+1$ and $r(v_2)=i$ would not
satisfy the order condition but we forbid $r(v_2)=i$.

Now, using the above description of the preimage,
for any function $r'$, one has:
\[\sum_{r \in \Phi^{-1}(r')} \left( \prod_{v_1 \in V_1} p_{r(v_1)}      
\prod_{v_2 \in V_2} q_{r(v_2)} \right)
= \left( \prod_{v_1 \in V_1} p'_{r'(v_1)}      
\prod_{v_2 \in V_2} q'_{r'(v_2)} \right),\]
Summing over all functions $r': V \to \{1,\dots,m-1\}$ with the order condition,
we get equality~\eqref{EqQZero}.

The proof of \eqref{EqPZero} is similar.
\end{proof}

\begin{remark}{\rm
In~\cite[Section 1.5]{FS11}, an equivalent definition of $N_G$ as
a function on Young diagrams is given in the case $E_{2,1} = \emptyset$.
The sole fact that $N_G$ can be defined using only the Young diagram and
not its rectangular coordinates explains that it belongs to $\Sol'$.
}
\end{remark}

As $N_G$ belongs to $\Sol'$, its image $\Phipx(N_G)$ is an element of $\Sol$,
that is, some quasi-symmetric function evaluated on the virtual alphabet $\X$.
We shall now determine this quasi-symmetric function.

Define $F_G$ as the generating series of functions on $G$ satisfying the order condition:                          
\[F_G(u_1,u_2,\dots)=\sum_{\gf{r:V \to \N}{\text{with order condition}}}
\prod_{v \in V} u_{r(v)}.\]
Note that $F_G$ is a quasi-symmetric function.

\begin{proposition}
\label{PropFormulaNG}
For any bipartite graph $G$ with vertex set $V=V_1 \sqcup V_2$,
\[ N_G\left( \begin{array}{ccc}
p_1 & p_2 & \dots\\
q_1 & q_2 & \dots
\end{array} \right) = (-1)^{|V_1|} \Phixp\big(F_G(\X)\big).\]
\end{proposition}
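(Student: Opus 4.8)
The plan is to identify the quasi-symmetric function $\Phipx(N_G)$ in $\Sol$ directly and then apply $\Phixp$ to both sides, using that $\Phixp \circ \Phipx = \Id_{\Sol'}$ from Lemma~\ref{LemKerChgtCoord}. First I would translate the definition of $N_G$ into interlacing coordinates. Since $\Phipx$ uses the change of variables $p_i = x_{2i-1}-x_{2i}$ and $q_i = x_{2i}-x_{2i+1}$, the monomial $\prod_{v_1} p_{r(v_1)} \prod_{v_2} q_{r(v_2)}$ becomes $\prod_{v_1}(x_{2r(v_1)-1}-x_{2r(v_1)})\prod_{v_2}(x_{2r(v_2)}-x_{2r(v_2)+1})$. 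The key observation I want to exploit is that evaluating the quasi-symmetric function $F_G$ on the virtual alphabet $\X = \ominus(x_1)\oplus(x_2)\ominus(x_3)\oplus\cdots$ should produce exactly this expression, up to the global sign $(-1)^{|V_1|}$.

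The heart of the argument is to unpack $F_G(\X)$ combinatorially. Recall $F_G(u_1,u_2,\dots)=\sum_{r} \prod_v u_{r(v)}$ over order-preserving-type functions $r:V\to\N$. Evaluating a quasi-symmetric function on the ordered alphabet $\X$ amounts to reading each letter of $\X$ as one of the virtual letters $\ominus(x_i)$ (for odd $i$) or $\oplus(x_j)$ (for even $j$); the rules \eqref{EqSum} and \eqref{eq:virtual} for sums and differences of alphabets dictate how a monomial in the $u$'s with a given weakly/strictly increasing index pattern gets expanded. Concretely, I expect each variable $u_{r(v)}$ to be replaced by the virtual single-letter alphabet sitting in slot $r(v)$ of $\X$, contributing a factor $x_{2r(v)-1}$ with a minus sign (the odd, $\ominus$ slot) when $v\in V_1$ and a factor $x_{2r(v)}$ with a plus sign (the even, $\oplus$ slot) when $v\in V_2$. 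The alternation of large and strict inequalities in the order condition of $N_G$ (large for $E_{1,2}$, strict for $E_{2,1}$) is precisely what governs whether two vertices may or may not land in adjacent virtual letters, which is exactly the combinatorics encoded by the ordinal-sum structure of $\X$: within a single original index $r(v)$, the $\ominus$-letter precedes the $\oplus$-letter, so a $V_1$-vertex sharing an index with a $V_2$-vertex sits to its left. Matching these two descriptions, I would show $F_G(\X) = (-1)^{|V_1|} N_G$ as elements of $\Sol$, where the sign collects one $-1$ for each of the $|V_1|$ vertices sitting in an $\ominus$-slot.

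The main obstacle I anticipate is making the expansion of $F_G(\X)$ rigorous rather than heuristic: I must carefully justify how a monomial quasi-symmetric function $M_I$, evaluated on the infinite ordinal sum $\X$ of alternating virtual single letters, decomposes via iterated application of \eqref{EqSum}, \eqref{eq:diff} and \eqref{eq:virtual}, and check that the resulting bookkeeping of signs and of the large-versus-strict inequalities reproduces the summation defining $N_G$. The cleanest route is probably to work with $F_G$ written in the monomial basis and track, for each summand, the assignment of vertices to virtual letters; the strict inequalities from $E_{2,1}$ forbid precisely the collisions that the antipode-driven cancellations \eqref{EqYMoinsY}--\eqref{EqMoinsYPlusY} would otherwise kill. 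Once the identity $N_G = (-1)^{|V_1|} F_G(\X)$ is established in $\Sol$, applying $\Phixp$ gives $\Phixp(N_G) = (-1)^{|V_1|}\Phixp(F_G(\X))$; but by Lemma~\ref{LemNGInSol} the polynomial $N_G$ already lies in $\Sol'$, so $\Phixp(N_G)=N_G$, yielding the claimed formula after rearranging. I would close by noting that both sides are stable polynomials, so it suffices to verify the identity at each finite truncation, which reduces everything to a finite combinatorial check.
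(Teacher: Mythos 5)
There is a genuine gap at the heart of your plan: the ``key observation'' you intend to exploit is not how evaluation on the virtual alphabet $\X$ works, and the identity it is supposed to yield is the entire content of the proposition, which you leave unproven. Your recipe ties each vertex $v\in V_1$ to the $\ominus$-slot $x_{2r(v)-1}$ (with a minus sign) and each $v\in V_2$ to the $\oplus$-slot $x_{2r(v)}$ (with a plus sign). But when a quasi-symmetric function is evaluated on $\X$ via \eqref{EqSum} and \eqref{eq:virtual}, its monomials are distributed over slots of \emph{both} parities in all order-compatible ways, and in the $\ominus$-slots the antipode \eqref{eq:antipode} produces terms in which several parts merge into a single letter (compare $M_{(k,\ell)}(\X)$, which contains the merged terms $\sum_i x_{2i+1}^{k+\ell}$). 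A minimal counterexample: take $G$ to be a single vertex in $V_1$ with no edges, so $F_G=M_1$; then $F_G(\X)=-x_1+x_2-x_3+x_4-\cdots$, whereas your recipe gives $-x_1-x_3-x_5-\cdots$. On the other side, $\Phipx(N_G)$ is $\sum_r \prod_{v_1}(x_{2r(v_1)-1}-x_{2r(v_1)})\prod_{v_2}(x_{2r(v_2)}-x_{2r(v_2)+1})$, so after expansion each vertex contributes a letter of \emph{either} parity; your ``diagonal'' matching keeps only one term from each difference. Matching the two true expansions (slot assignments with antipode mergers on one side, choices inside each difference on the other) would require a careful sign-cancelling correspondence that you never construct; flagging it as ``the main obstacle'' does not discharge it.

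The paper's proof avoids any such expansion. Since $N_G\in\Sol'$ (Lemma~\ref{LemNGInSol}), the stable polynomial $\Phipx(N_G)$ lies in $\Sol$, hence equals $F(\X)$ for a (unique) quasi-symmetric function $F$. To identify $F$, specialize $x_{2i+1}=0$ for all $i$: then $\X$ degenerates to the honest alphabet $(x_2,x_4,\dots)$, so $F(\X)$ becomes $F(x_2,x_4,\dots)$, while by \eqref{EqChgtVar} the specialization sends $p_i\mapsto -x_{2i}$ and $q_i\mapsto x_{2i}$, so that $\Phipx(N_G)$ becomes $(-1)^{|V_1|}F_G(x_2,x_4,\dots)$ \emph{with no cross terms at all}. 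Hence $F=(-1)^{|V_1|}F_G$, and applying $\Phixp$ together with $\Phixp\circ\Phipx=\Id_{\Sol'}$ (Lemma~\ref{LemKerChgtCoord}) concludes. Your closing paragraph (invoking Lemma~\ref{LemNGInSol} and $\Phixp\circ\Phipx=\Id$, modulo the slight abuse of writing $\Phixp(N_G)=N_G$ when what is meant is $\Phixp(\Phipx(N_G))=N_G$) is the correct wrap-up; if you replace the unproven combinatorial matching by this specialization argument, the proof closes.
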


\begin{proof}
The stable polynomial $\Phipx(N_G)$ is an element of $\Sol$, that is $F(\X)$
for some quasi-symmetric function $F$.
To identify $F$, we shall send all odd-indexed variables $x_{2i+1}$ to $0$.
This amounts to sending $p_i$ to $-x_{2i}$ and $q_i$ to $x_{2i}$.
Thus we get that
\[F(x_2,x_4,\cdots) = (-1)^{|V_1|} F_G(x_2,x_4,\cdots).\]
This implies $F=(-1)^{|V_1|} F_G$ and the proposition follows
by applying $\Phixp$.
\end{proof}

\begin{remark}{\rm
We have just shown that the series $N_G$ in two sets of variables
can be recovered from the corresponding series $F_G$ in one set of variables.
This is a bit surprising and
it is quite remarkable that Young diagrams give a natural way of doing this.
}
\end{remark}

\begin{remark}{\rm 
In the above proof, to identify $F$, we could also have sent all even-indexed
variables $x_{2i}$ to zero.
Under this specialization,
\[F(\X) = F(\ominus(x_1) \ominus (x_3) \ominus \dots) = F(\ominus (\dots,x_3,x_1)) =S(F) (\dots,x_3,x_1),\]
{\em i.e.} the antipode of $F$ evaluated in the {\em reverted} alphabet       
$(\dots, x_3,x_1)$.

On the other side $\Phipx(N_G)$, when $x_{2i}=0$, consists in
plugging $p_{i+1}=-q_i=x_{2i+1}$ in $N_G$.
Thus we get, up to a sign $(-1)^{|V_2|}$,
the generating series of functions $r$ on the graph $G$ verifying some modified order condition:
just take the order condition above and exchange large and strict inequalities
(this modification of type of equalities comes from the above index shift).

Finally, as $F=(-1)^{|V_1|} F_G$,
we get that $S(F_G)$ is, up to a sign $(-1)^{|V|}$, the generating
series $F_{G'}$ where $G'$ is obtained from $G$ by exchanging the sets
$E_{1,2}$ and $E_{2,1}$.

This result can be extended to general directed graphs with two types of edges
(here, we only considered the "bipartite" case)
and is a direct consequence of the fundamental lemma on $P$-partitions
\cite[Theorem 6.2]{StOrderedStructure} and of the image of
a fundamental quasi-symmetric function by the antipode
\cite[Corollary 2.3]{MR}.
}\end{remark}

\section{Noncommutative generalization}
\label{SectNonCommutative}

Let now $A$ be a noncommutative ordered alphabet. We shall find all
noncommutative polynomials $P(A)$ satisfying the noncommutative version of
Equation~\eqref{eqfoncQS}, that is, Equation~\eqref{eqfoncWQS}, reproduced here
for convenience:
\begin{equation*}
P(a_1,a_2,\dots)|_{a_i=a_{i+1}} =
P(a_1,\dots,a_{i-1},a_{i+2},\dots).
\end{equation*}

As in the commutative framework, homogeneous polynomials in infinitely many variables are formally
sequences of homogeneous polynomials $(R_n)_{n \ge 1}$,
where $R_n$ lies in the algebra $\C\langle a_1,..., a_n \rangle$ (polynomials
in $n$ noncommuting variables)  such that
$R_{n+1}(a_1,\ldots,a_n,0)=R_n(a_1,\ldots,a_n)$.
The vector space of these stable noncommutative polynomials will be denoted $\C\langle A\rangle$.
\subsection{Word quasi symmetric functions}
\label{SubsectDefWQSym}

The natural noncommutative analogue of $\QSym$ is the algebra of
\emph{word quasi symmetric functions}, denoted by $\WQSym$.
We recall here its construction (see, {\it e.g.}, \cite{NT-tri}).

Monomials in the noncommutative framework are canonically indexed
by finite words $w$ on the alphabet $\N$ as follows
\[\bm{a}_w = a_{w_1} \, a_{w_2} \dots a_{w_{|w|}}. \]
The {\em evaluation} $\eval(w)$ of a word $w$ is the integer sequence $v=(v_1,v_2,\dots)$,
where $v_i$ is the number of letters $i$ in $w$.
Then the commutative image of $\bm{a}_w$ is $\bm{X}^{\eval(w)}$.

In the noncommutative framework, set compositions play the role of
compositions.
A {\em set composition} of $n$ is an {\em (ordered)} list $(I_1,\dots,I_p)$
of pairwise disjoint non-empty subsets of $\{1,\dots,n\}$,
whose union is $\{1,\dots,n\}$.

Such an object can be encoded by a word $w$ of length $n$ defined as follows:
$w_i=j$ if $i \in I_j$.
This encoding is injective.
Words obtained that way are exactly those satisfying the following property:
for $j\ge 1$, if the letter $j+1$ appears in $w$, then $j$ appears in $w$.
Such words are called {\em packed}.
Equivalently, a word $w$ is packed if and only if its evaluation $\eval(w)$
can be written as $\bm{c},0,0,\dots$, where $\bm{c}$ is a composition (that is,
a vector of positive integers).

With each word $w$ is associated a packed word, called {\em packing} of $w$
and denoted $\pack(w)$:
replace all occurrences of the smallest letter appearing in $w$ by $1$
then, occurrences of the second smallest letter by $2$ and so on.
For example the packing of $3\, 6\, 4\, 4\, 3\, 4$
is $1\, 3\, 2\, 2\, 1\, 2$.

Depending on the point of view, it may be more convenient to use set
compositions or packed words. In this section, we use packed words while the
next one deals with set compositions.

By definition, $\WQSym$ is a subalgebra of the algebra of stable polynomials
in noncommuting variables $a_1,a_2,\dots$.
A basis of $\WQSym$ is given as follows:
\[ P_u = \sum_{\gf{w \text{ s.t.}}{\pack(w)=u}} \bm{a}_w \]
Note that the commutative image of $P_u$ is $M_{\eval(u)}$.

Conversely, if $u$ is a non-decreasing packed word,
$P_u$ is the unique noncommutative stable polynomial whose commutative
image is $M_{\eval(u)}$ and in which all monomials have letters in nondecreasing order.
Now, for any packed word $u$, consider its nondecreasing rearrangement $u^\uparrow$
and the smallest permutation $\sigma_u$ in $\SG_{|u|}$ sending $u^\uparrow$ to $u$.
Then, the polynomial $P_u$ is obtained by letting $\sigma_u$
act on all monomials of $P_{u^\uparrow}$
(note that all monomials in $P_{u^\uparrow}$ have $|u|$ letters).
These two properties characterize the elements $P_u$.

To finish, let us mention that 
the ordered Bell numbers $OB(n)$~\cite[A000670]{Sloane} count packed words,
thus give the dimension of the homogeneous subspace of degree $n$ of $\WQSym$.

\subsection{$\WQSym$ and a virtual alphabet}
\label{SubsectEvalNCAlph}

We want to make sense of the virtual alphabet
\begin{equation*}
\A = \ominus (a_1) \oplus (a_2) \ominus (a_3) \oplus (a_4) \ominus \dots
\end{equation*}
We cannot define $\ominus$ by means of the antipode of
$\WQSym$, since it is not involutive.

Rather, we define $\WQSym(\A)$ as follows: if $u$ is a nondecreasing packed
word, $P_u(\A)$ is the noncommutative analogue of $M_{\eval(u)}(\X)$
where $x_k$ is replaced by $a_k$ and all letters
in any monomial of $P_u(\A)$ are in nondecreasing order.
Now, for any $u$, the (stable) polynomial $P_u(\A)$ is obtained by letting $\sigma_u$
act on all monomials of $P_{u^\uparrow}(\A)$,
where $\sigma_u$ and $u^\uparrow$ are defined as above.

Finally, for $F$ in $\WQSym$, we define $F(\A)$ by linearity.
For example,
\begin{align}
P_{1^k}(\A) &= - a_1^k + a_2^k - a_3^k + a_4^k - \dots;\\
P_{1^k 2^\ell}(\A) &= \sum_{i} a_{2i+1}^{k+\ell} + 
    \sum_{i<j} (-1)^{i+j} a_i^k a_j^\ell;\\
P_{112}(\A) &= \sum_{i} a_{2i+1}^3 + 
    \sum_{i<j} (-1)^{i+j} a_i a_i a_j;\\
P_{121}(\A) &= \sum_{i} a_{2i+1}^3 + 
    \sum_{i<j} (-1)^{i+j} a_i a_j a_i;\\
P_{211}(\A) &= \sum_{i} a_{2i+1}^3 + 
    \sum_{i<j} (-1)^{i+j} a_j a_i a_i;
\end{align}

It is not hard to check that plugging $a_p=a_{p+1}$ for some $p$
in the noncommutative polynomials above eliminates these variables.
This property will be established for any packed word $u$ in the next section.

Finally, we denote by $\WQSym(\A)$ the following subspace of $\C\langle A \rangle$:
\[\WQSym(\A) = \{ P(\A), P \in \WQSym \}\]
We will see in next section that $\WQSym(\A)$ is an algebra and 
that $F \mapsto F(\A)$ is an isomorphism of algebras from 
$\WQSym$ to $\WQSym(\A)$.

\subsection{Solution of Equation \eqref{eqfoncWQS}}

\begin{theorem}
A polynomial $P$ satisfies Equation (\ref{eqfoncWQS})
if and only if $P$ belongs to $\WQSym(\A)$.
\end{theorem}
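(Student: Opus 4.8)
The plan is to mirror the proof of the commutative statement given in Section~\ref{SectSolution}, replacing the count of packed monomials ($2^{n-1}$) by the count of packed words, namely the ordered Bell number $OB(n)$. Concretely, I would fix a homogeneous degree $n$ and prove two things. First, that every $P_u(\A)$ is a solution of \eqref{eqfoncWQS}, so that by linearity of $F\mapsto F(\A)$ the whole of $\WQSym(\A)$ lies in the solution space. Second, that the space of homogeneous solutions of degree $n$ has dimension at most $OB(n)$, while the $P_u(\A)$ are linearly independent. Since there are exactly $OB(n)$ packed words of degree $n$, the two dimensions coincide and the inclusion $\WQSym(\A)\subseteq\{\text{solutions}\}$ becomes an equality.

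For the first point I would establish the substitution property announced in Section~\ref{SubsectEvalNCAlph}: that setting $a_s=a_{s+1}$ in $P_u(\A)$ eliminates these variables. I would first treat a nondecreasing word $u$. By definition $P_u(\A)$ is the image of the commutative function $M_{\eval(u)}(\X)$ under the linear bijection $\Theta$ sending each commutative monomial $x_{b_1}\cdots x_{b_n}$ (with $b_1\le\cdots\le b_n$) to the noncommutative monomial $a_{b_1}\cdots a_{b_n}$. This $\Theta$ intertwines the substitution $x_{s+1}=x_s$ with $a_{s+1}=a_s$ (merging two adjacent values preserves the nondecreasing order) and commutes with the relabelling of the alphabet. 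Hence the fact that $M_{\eval(u)}(\X)$ solves \eqref{eqfoncQS}, proved in Section~\ref{SectSolution}, transports directly to the statement that $P_u(\A)$ solves \eqref{eqfoncWQS}. For a general packed word $u$, recall $P_u(\A)=\sigma_u\cdot P_{u^\uparrow}(\A)$, where $\sigma_u\in\SG_n$ permutes the $n$ positions of each (length $n$) monomial. A permutation of positions commutes with the substitution $a_{s+1}=a_s$ and with the relabelling of values, so $P_u(\A)|_{a_{s+1}=a_s}=\sigma_u\cdot\big(P_{u^\uparrow}(\A)|_{a_{s+1}=a_s}\big)$, and the nondecreasing case finishes the argument.

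For the dimension bound I would copy the coefficient-determination argument. Write a homogeneous solution as $P=\sum_w c_w\,\bm{a}_w$ and attach to each monomial the integer $\ell(\bm{a}_w)=\sum_i i\,v_i$ with $v=\eval(w)$, as in \eqref{eq:ell}. If $w$ is not packed, choose $s$ with the letter $s+1$ occurring in $w$ but not $s$, and apply \eqref{eqfoncWQS} for this $s$. On the right-hand side the variable $a_s$ does not appear, so the coefficient in $P|_{a_{s+1}=a_s}$ of the word obtained from $w$ by turning every $s+1$ into $s$ must vanish. The monomials of $P$ contributing to that coefficient are exactly the words agreeing with $w$ away from its $(s+1)$-positions and carrying an $s$ or an $s+1$ there; all of them except $w$ itself have strictly smaller $\ell$. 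This expresses $c_w$ as a combination of coefficients of strictly smaller $\ell$, and since the words of minimal $\ell$ (all letters equal to $1$) are packed, an induction on $\ell$ shows that all coefficients are determined by those of packed words. Hence the solution space has dimension at most $OB(n)$.

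Finally, the $P_u(\A)$ are linearly independent: specialising $a_{2i+1}=0$ kills the $\ominus$ terms of $\A$ and turns $P_u(\A)$ into the genuine evaluation $P_u(a_2,a_4,\dots)$, and these are linearly independent because the $P_u$ form a basis of $\WQSym$. This gives $\dim\WQSym(\A)\ge OB(n)$ in each degree, completing the count. The step I expect to demand the most care is the substitution property of the second paragraph: because the antipode of $\WQSym$ is not involutive, one cannot derive it from a clean Hopf-algebraic identity such as \eqref{EqYMoinsY}--\eqref{EqMoinsYPlusY}, as was done in the commutative case, so the whole weight rests on checking that the ad hoc definition $P_u(\A)=\sigma_u\cdot P_{u^\uparrow}(\A)$ is genuinely equivariant, that is, that permuting positions really does commute both with the merging $a_{s+1}=a_s$ and with the relabelling used on the right-hand side of \eqref{eqfoncWQS}.
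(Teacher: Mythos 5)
Your proposal is correct and follows essentially the same route as the paper's own proof: the upper bound $OB(n)$ via the relation expressing the coefficient of a non-packed word in terms of coefficients of words with smaller $\ell$, the reduction of the nondecreasing case $P_u(\A)$ to the commutative solution $M_{\eval(u)}(\X)$ (your bijection $\Theta$ is just an explicit packaging of the paper's observation that cancellations among nondecreasing monomials mimic the commutative ones), the extension to general packed words by equivariance under the positional action of $\sigma_u$, and linear independence via the specialization $a_{2i+1}=0$. The only differences are presentational.
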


\begin{proof}
We introduce the following ring homomorphism 
$$\begin{array}{rcl}
\phi:\C\langle A\rangle&\rightarrow&\C[X] ;\\
a_i&\mapsto& x_i.
\end{array}$$

We first prove that  the dimension of the space of homogeneous polynomials in
$\C[A]$ of degree $n$ which satisfy \pref{eqfoncWQS} is at most equal to
$OB(n)$ (ordered Bell number).
We may observe that a word $w\in A^{*}$ is packed iff $\phi(w)$ is packed as a
monomial in $\C[X]$. Let 
$$P=\sum_{w\in A^{*}} c_w w$$ 
be a homogeneous polynomial of degree $n$, solution of \pref{eqfoncWQS}.
The goal is to prove that all the coefficients of $P$ are determined by 
its coefficients on packed words.
Let $w$ be a non-packed word and $(a_i,a_{i+1})$ be a  pair of letters such
that: $a_i$ does not appear in $w$, and $a_{i+1}$ does. 
We let $a_i=a_{i+1}=a$ in $P$.
By looking at the coefficient of the word obtained by replacing $a_{i+1}$
by $a$ in $w$, which does not appear on the right-hand side of
\pref{eqfoncWQS}, we get a linear relation between $c_w$ and coefficients
$c_v$ of words $v$ such that $\ell(\phi(v))<\ell(\phi(w))$ ($\ell$ is defined
in \eqref{eq:ell}), whence the upper bound on the dimension.

Let us now prove that, if $P\in\WQSym(\A)$, it satisfies the functional
Equation~(\ref{eqfoncWQS}).
First consider the case $P=P_u(\A)$, with $u$ nondecreasing.
By definition, $P_u$ contains only monomials with variables with
{\em non-decreasing indices}.
This is still true after substitution $a_{i+1}=a_i$ as we 
equate two {\em consecutive} variables.
Therefore any cancellation occurring in the commutative image,
is mimicked in the noncommutative version.
As $M_{\text{eval}(u)}(\X)$, which is the commutative image of $P_u(\A)$,
is a solution of Equation \eqref{eqfoncQS}, the noncommutative
polynomial $P_u(A)$ is a solution of  \eqref{eqfoncWQS}.

Moreover, if it is true for a polynomial $P$, then it is true for any
polynomial obtained by action of a permutation on it, so any element of
$\WQSym(\A)$ satisfies~\eqref{eqfoncWQS}.
Besides, 
the $P_u(\A)$ are linearly independent since, setting $a_{2i+1}=0$, one
transforms $P_u(\A)$ into the usual word quasi-symmetric function in
even-indexed variables $P_u(a_2,a_4,\dots)$.
This gives a lower bound on the dimension of the space of solutions
of Equation \eqref{eqfoncWQS},
corresponding to the upper bound found above, which ends the proof.
\end{proof}

We can now prove the following.
\begin{corollary}\label{corol:FtoFA_Morphism}
The space $\WQSym(\A)$ is a subalgebra of $\C\langle A\rangle$ and 
$F \mapsto F(\A)$ is an isomorphism of algebras from 
$\WQSym$ to $\WQSym(\A)$.
\end{corollary}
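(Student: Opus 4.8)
The plan is to deduce everything from the theorem just proved, which identifies $\WQSym(\A)$ with the space $\SolNC$ of all solutions of Equation~\eqref{eqfoncWQS}. First I would record that $\SolNC$ is a subalgebra of $\C\langle A\rangle$. Both operations occurring in~\eqref{eqfoncWQS} are ring homomorphisms: substituting $a_{i+1}=a_i$ is a specialization, and relabelling $a_{i+2}\mapsto a_i,\ a_{i+3}\mapsto a_{i+1},\dots$ is an injective renaming of variables. Writing $s$ for the former and $r$ for the latter, a polynomial $P$ solves~\eqref{eqfoncWQS} exactly when $s(P)=r(P)$; hence for two solutions $P,Q$ one has $s(PQ)=s(P)s(Q)=r(P)r(Q)=r(PQ)$, so $PQ$ is again a solution (and $1$ is trivially one). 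By the theorem, $\WQSym(\A)=\SolNC$, so $\WQSym(\A)$ is a subalgebra of $\C\langle A\rangle$.

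Next I would observe that $F\mapsto F(\A)$ is linear by construction and surjective onto $\WQSym(\A)$ by definition, and that it is injective: the proof of the theorem already exhibited the $P_u(\A)$ as linearly independent, via the fact that setting $a_{2i+1}=0$ turns $P_u(\A)$ into the genuine word quasi-symmetric function $P_u(a_2,a_4,\dots)$. Thus $F\mapsto F(\A)$ is a linear isomorphism $\WQSym\to\WQSym(\A)$, and the only thing left to establish is multiplicativity.

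For this, fix $F,G\in\WQSym$. Since $F(\A)$ and $G(\A)$ lie in the subalgebra $\WQSym(\A)$, their product is of the form $H(\A)$ for a unique $H\in\WQSym$ (uniqueness by the injectivity above). To identify $H$, I would apply the ring homomorphism $\psi$ sending $a_{2i+1}\mapsto 0$ and fixing each $a_{2i}$. By linearity, the identity $P_u(\A)|_{a_{2i+1}=0}=P_u(a_2,a_4,\dots)$ extends to $\psi\big(P(\A)\big)=P(a_2,a_4,\dots)$ for every $P\in\WQSym$, so applying $\psi$ to $F(\A)\,G(\A)=H(\A)$ yields
\[ F(a_2,a_4,\dots)\,G(a_2,a_4,\dots)=H(a_2,a_4,\dots). \]
But $P\mapsto P(a_2,a_4,\dots)$ is simply the defining realization of $\WQSym$ on the infinite alphabet $\{a_2,a_4,\dots\}$, obtained by the order-preserving relabelling $a_j\mapsto a_{2j}$, hence an injective algebra morphism. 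Therefore the left-hand side equals $(FG)(a_2,a_4,\dots)$, and injectivity gives $H=FG$. This proves $F(\A)\,G(\A)=(FG)(\A)$, so $F\mapsto F(\A)$ is an algebra isomorphism.

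The only genuinely delicate point is multiplicativity, and the obstacle is conceptual rather than computational: the evaluation $F\mapsto F(\A)$ is built from an ad hoc recipe (the nondecreasing words $P_{u^\uparrow}(\A)$ together with the action of $\sigma_u$), so compatibility with products is invisible from the definition. Instead of fighting this recipe, the argument outsources multiplicativity to the specialization $a_{2i+1}\mapsto0$, which collapses the virtual alphabet $\A$ to a genuine alphabet on which $\WQSym$ multiplies tautologically; the subalgebra property of $\WQSym(\A)$ (itself a consequence of the theorem) is precisely what lets one write $F(\A)G(\A)$ as some $H(\A)$ in the first place. I would just make sure, as routine verifications, that $\psi$ is well defined on stable noncommutative polynomials and that it sends $P(\A)$ to $P(a_2,a_4,\dots)$ — both of which are already implicit in the proof of the theorem.
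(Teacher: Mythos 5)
Your proposal is correct and follows essentially the same route as the paper: the subalgebra property is obtained from the theorem identifying $\WQSym(\A)$ with the solution space of \eqref{eqfoncWQS}, injectivity comes from the linear independence of the $P_u(\A)$, and multiplicativity is proved by writing $F(\A)\,G(\A)=H(\A)$ inside the subalgebra and identifying $H$ via the specialization $a_{2i+1}\mapsto 0$, which realizes $\WQSym$ tautologically on the even-indexed variables. The only differences are cosmetic: you spell out why the solution set is closed under products (the paper calls this ``clear'') and you name the specialization $\psi$ explicitly.
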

\begin{proof}
The space of solution of Equation \eqref{eqfoncWQS} is clearly a subalgebra of $\C\langle A\rangle$.
But we have just proved that it is $\WQSym(\A)$, whence our first claim.

Surjectivity of $F \mapsto F(\A)$ comes directly from the definition and
injectivity corresponds to the fact the $P_u(\A)$ are linearly independent, proved above.
Thus the only thing to prove is that it indeed defines an algebra morphism.

Let $F$ and $G$ be two elements of $\WQSym$ and,
to avoid confusion, denote by $F\star G$ their product in $\WQSym$.
As $\WQSym(\A)$ is an algebra, the stable polynomial $F(\A) \cdot G(\A)$
(here, the product is the product in $\C\langle A\rangle$) can be written as $H(\A)$,
for some element $H \in \WQSym$.
But setting $a_{2i+1}=0$ sends $F(\A)$, $G(\A)$ and $H(\A)$ to the {\em usual}
word quasi symmetric functions $F$, $G$ and $H$ in even indexed variables.
Therefore this specialization also sends $F(\A) \cdot G(\A)$ to $F\star G$ in even indexed variables.
Thus, necessarily, $H=F\star G$, which concludes the proof.
\end{proof}

\begin{remark}{\rm
This morphism property is natural to look for when defining evaluation on virtual alphabets.
It is quite remarkable that our functional equation helps to prove it. 
}\end{remark}

\section{Non commutative multirectangular coordinates}
\label{SectNonCommutativeMultirect}

\subsection{What can be extended to the noncommutative framework?}
We would like to lift the sets and maps of the commutative diagram
\eqref{EqDiagrammeCommutatif} to the noncommutative framework.

In the previous Section, we have studied the structure of 
$\SolNC$, the noncommutative analog of $\Sol$.
The algebra $\QLa$, defined as a subalgebra of functions on Young diagrams,
has no noncommutative analog.
A noncommutative analog of $\Sol'$ is easily defined as follows.

Consider the space $\SolNC'$ of polynomials in two infinite sets of
non commutative variables satisfying
\begin{align}
    \left. h_m\left( \begin{array}{ccc} 
        b_1 & \dots & b_m \\
        d_1 & \dots & d_m
    \end{array} \right)\right|_{d_i=0}
    &=
    h_{m-1}\left( \begin{array}{ccccccc}
        b_1&\dots&b_{i-1}&b_i+b_{i+1}&\dots&b_m\\
        d_1&\dots&d_{i-1}&d_{i+1}&\dots&d_m
    \end{array} \right) 
    \label{EqDZero} \\
    \left. h_m\left( \begin{array}{ccc} 
        b_1 & \dots & b_m \\
        d_1 & \dots & d_m
    \end{array} \right)\right|_{b_i=0}
    &=
    h_{m-1}\left( \begin{array}{ccccccc}
        b_1&\dots&b_{i-1}&b_{i+1}&\dots&b_m\\
        d_1&\dots&d_{i-1}+d_i&d_{i+1}&\dots&d_m
    \end{array} \right) \label{EqBZero}
\end{align}

We also consider the substitutions
\begin{equation}\label{EqChgtVarNC}
    \left\{ \begin{array}{l}
        b_i = a_{2i-1}-a_{2i}\\
        d_i = a_{2i}-a_{2i+1}
    \end{array} \right. \hspace{1cm}
     \left\{ \begin{array}{l}    
        a_{2i+1} = (d_{i+1}+\dots+d_m)-(b_1+\dots+b_i) \\
        a_{2i} = (d_i+\dots+d_m)-(b_1+\dots+b_i)     
    \end{array} \right.
\end{equation}

Then we have the following:
\begin{proposition}
    The substitutions~\eqref{EqChgtVarNC} define morphisms
\[\begin{array}{rccl}
    \Phiab:& \SolNC &\to &\SolNC' \\
    \Phiba:& \SolNC' &\to &\SolNC,
\end{array}\]
with $\Phiab \circ \Phiba = \Id_{\SolNC'}$.
In particular $\Phiab$ is surjective.
\end{proposition}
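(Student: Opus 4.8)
The plan is to follow the commutative treatment of Section~\ref{SubsectLinkTwoSystems} and Lemma~\ref{LemKerChgtCoord} almost verbatim, the essential point being that the substitutions \eqref{EqChgtVarNC} involve only sums and differences of variables, hence are well defined and mutually inverse even in the noncommutative setting. Concretely, $\Phiab$ sends $f=(f_n)\in\SolNC$ to the sequence $h=(h_m)$ given by
\[
h_m\left(\begin{array}{ccc} b_1 & \dots & b_m \\ d_1 & \dots & d_m \end{array}\right)
= f_{2m+1}(a_1,\dots,a_{2m+1}),
\]
where the $a_j$ are expressed through the right-hand substitution of \eqref{EqChgtVarNC}, while $\Phiba$ sends $h\in\SolNC'$ to the $f$ defined by the same formula read backwards, using $b_i=a_{2i-1}-a_{2i}$ and $d_i=a_{2i}-a_{2i+1}$. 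Since substituting noncommutative linear forms into a noncommutative polynomial is a ring homomorphism, both maps are automatically algebra morphisms once shown to be well defined.

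First I would check that $\Phiab$ lands in $\SolNC'$. Stability of $h$ and the two relations \eqref{EqDZero} and \eqref{EqBZero} are proved exactly as in Section~\ref{SubsectLinkTwoSystems}: setting $d_i=0$ forces $a_{2i}=a_{2i+1}$, and applying \eqref{eqfoncWQS} to these two \emph{consecutive} variables deletes them, the surviving merged coordinate being $a_{2i-1}-a_{2i+2}=(a_{2i-1}-a_{2i})+(a_{2i+1}-a_{2i+2})=b_i+b_{i+1}$, precisely the entry appearing on the right of \eqref{EqDZero}. The treatment of $b_i=0$ and \eqref{EqBZero} is symmetric, now merging two $d$-coordinates into $d_i+d_{i+1}$. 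Because \eqref{eqfoncWQS} only ever equates consecutive letters, the cancellations occurring in the commutative image are mirrored letter by letter here, so no commutativity is used.

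Next I would verify that $\Phiba$ lands in $\SolNC$, i.e.\ that $f=\Phiba(h)$ satisfies \eqref{eqfoncWQS}. Setting $a_k=a_{k+1}$ gives $d_i=0$ when $k=2i$ and $b_{i+1}=0$ when $k=2i+1$; in either case the relation \eqref{EqDZero} or \eqref{EqBZero} satisfied by $h$ reproduces exactly $f$ with the two letters deleted, with the coordinate merging computed as above, and stability of $f$ uses the edge case $i=m$ of \eqref{EqDZero}. Finally, for $\Phiab\circ\Phiba=\Id_{\SolNC'}$ I would run the computation of Lemma~\ref{LemKerChgtCoord}: starting from $(b,d)$, pass to the $a_j$ via the right-hand side of \eqref{EqChgtVarNC} and back via the left-hand side; a telescoping identity gives $a_{2i-1}-a_{2i}=b_i$ and $a_{2i}-a_{2i+1}=d_i$, so the composite is the identity on each $h_m$. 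Surjectivity of $\Phiab$ is then immediate.

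I expect no deep obstacle: the only point requiring genuine care is confirming that these commutative cancellation arguments survive without commutativity, which is guaranteed by the fact---already used in establishing Theorem~\ref{ThmSolEqWQSym}---that \eqref{eqfoncWQS} equates only consecutive variables, so the ordering of letters in each monomial is preserved under the substitutions. Note that, unlike Lemma~\ref{LemKerChgtCoord}, the present statement asks only for $\Phiab\circ\Phiba=\Id$ and surjectivity, not for the kernel of $\Phiab$, whose determination is the subtler content postponed to the rest of the section.
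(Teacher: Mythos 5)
Your proposal is correct and follows essentially the same route as the paper, whose entire proof is the observation that the commutative arguments of Section~\ref{SubsectLinkTwoSystems} and Lemma~\ref{LemKerChgtCoord} carry over unchanged; you simply make explicit why they carry over (the substitutions are linear, \eqref{eqfoncWQS} only ever equates consecutive variables, and the telescoping identity giving $\Phiab\circ\Phiba=\Id$ uses no commutativity). The additional care you take with the edge cases ($i=1$, $i=m$, and stability) is exactly the ``without any changes'' content the paper leaves implicit.
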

\begin{proof}
    The arguments given in the commutative setting in Section \ref{SubsectLinkTwoSystems}
    work without any changes in the noncommutative setting.
\end{proof}

A natural question is to determine the kernel of $\Phiab$.
This kernel is a two-sided element of $\WQSym(\A)$
and thus, by Corollary \ref{corol:FtoFA_Morphism},
can be identified to a two-sided element of $\WQSym$.
Clearly $P_1(\A)=\sum (-1)^i a_i$ lies in this kernel,
and hence, the two-sided ideal generated by $P_1$ is included in it.

Recall that the symmetric group $\SG_n$ acts on polynomials of degree $n$ in
non-commuting variables by permuting the letters inside words.
This action stabilizes the homogeneous component $\WQSym_n$ of degree $n$ of
$\WQSym$.
Note that $\Phiab$ is compatible with the reordering of variables
inside a noncommutative monomial and, hence,
the homogeneous part of degree $n$
of its kernel is invariant by the action of $\SG_n$.

We shall prove in this Section that the kernel of $\Phiab$
is indeed the smallest two-sided ideal containing $P_1$ and 
stable by reordering variables.\bigskip

{\bf Notation.}
Let us fix some notations and conventions for the symmetric
group and its action.

Applying the product $\sigma \tau$ of two permutations 
$\sigma$ and $\tau$ means applying $\sigma$ and {\em then}
$\tau$.
Then the symmetric group $S_n$ has natural right actions
on positive integers smaller or equal to $n$,
subsets of $\{1,\dots,n\}$, words of length $n$,
noncommutative polynomials of degree $n$.
We shall denote the image of an object $O$ by a permutation $\sigma$
by $O \cdot \sigma$.
By example
\begin{align*}
    \{1,2\} \cdot 3124 &= \{1,3\}\\
 ( a_{i_1} a_{i_2} a_{i_3} a_{i_4} ) \cdot 3124
 &= a_{i_3} a_{i_1} a_{i_2} a_{i_4}.
 \end{align*}

\subsection{A lower bound on the dimension of the kernel}
\label{SubsectDimConjKernel}

Let us denote by $\KK_n$ the subspace of $\WQSym_n$ obtained by taking:
\begin{itemize}
    \item elements of degree $n$ of the {\em left}
        ideal generated by $P_1$;
    \item their images by the action of permutations in $\SG_n$.
\end{itemize}

Clearly, $\KK_n$ is included in the kernel of $\Phiab$
(it is a priori smaller than the degree $n$ component
of the smallest two-sided ideal containing $1$ and 
stable by the action of $\SG_n$).
We shall determine the dimension of $\KK_n$ and, hence,
a lower bound on the dimension of the kernel of $\Phiab$
in degree $n$.

Let us first introduce a few more notations.
For each $n$, choose a complementary subspace $\U_n$ of $\KK_n$ in
$\WQSym_n$, stable by the action of $\SG_n$
(as $\KK_n$ is stable by the action of the {\em finite} group $\SG_n$,
the existence of a stable complementary subspace is a classical lemma
in representation theory, see, {\em e.g.},
\cite[Proposition 1.5]{FultonHarris}).
Then construct subspaces $M^{n}_E \subset \WQSym_n$ indexed 
by subsets $E$ of $\{1,\dots,n\}$ as follows.
\begin{itemize}
\item If $E=\{n-k+1,\dots,n\}$ for some $k \in \{0,\dots,n \}$,
      then set $M^{n}_E=U_{n-k} \cdot P_1^{k}$.
\item Otherwise, set $k=|E|$ and
      choose a permutation $\sigma_E\in\SG_n$ such that one has 
      $\{n-k+1,\dots,n\} \cdot \sigma_E=E$ and define 
      $M^{n}_E=M^{n}_{\{n-k+1,\dots,n\}} \cdot \sigma_E $.
\end{itemize}

\begin{lemma}
\label{LemMEWellDefined}
The space $M^{n}_E$ is well-defined.
Moreover, if $\sigma \in \SG_n$ and $E$ is a subset of $\{1,\dots,n\}$, then
$M^n_E \cdot \sigma=M^n_{E \cdot \sigma}$.
\end{lemma}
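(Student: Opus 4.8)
The plan is to first isolate the one genuine point, an invariance property of the distinguished spaces, and then deduce both assertions by formal manipulation of the right action. Write $E_0=\{n-k+1,\dots,n\}$ for the standard subset of size $k$, so that by construction $M^n_{E_0}=\U_{n-k}\cdot P_1^{k}$, where $\U_{n-k}$ occupies the first $n-k$ positions and $P_1^{k}$ the last $k$. I claim that $M^n_{E_0}$ is stable under every permutation $\tau$ with $E_0\cdot\tau=E_0$. The set of such $\tau$ is the Young subgroup $\SG_{\{1,\dots,n-k\}}\times\SG_{\{n-k+1,\dots,n\}}$, which permutes the first $n-k$ positions and the last $k$ positions separately; write $\tau=(\tau_1,\tau_2)$ accordingly.

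The key structural fact is that the letter-permutation action is compatible with the product of $\WQSym$ blockwise: for $f\in\WQSym_{n-k}$, $g\in\WQSym_{k}$, and $\tau=(\tau_1,\tau_2)$ as above, one has $(f\cdot g)\cdot\tau=(f\cdot\tau_1)\,(g\cdot\tau_2)$, since rearranging the letters of a concatenation within each block amounts to rearranging each factor and then concatenating. Applying this with $f\in\U_{n-k}$ and $g=P_1^{k}$ gives $M^n_{E_0}\cdot\tau=(\U_{n-k}\cdot\tau_1)\,(P_1^{k}\cdot\tau_2)$. Now $\U_{n-k}$ was chosen $\SG_{n-k}$-stable, so $\U_{n-k}\cdot\tau_1=\U_{n-k}$; and since $P_1=\sum_i a_i$, the element $P_1^{k}=\sum_{w}\bm{a}_w$ is the sum of all words of length $k$, hence manifestly invariant under any permutation of its positions, so $P_1^{k}\cdot\tau_2=P_1^{k}$. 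Therefore $M^n_{E_0}\cdot\tau=M^n_{E_0}$, which proves the claim.

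Well-definedness follows at once: if $\sigma_E$ and $\sigma_E'$ both satisfy $E_0\cdot\sigma_E=E_0\cdot\sigma_E'=E$, then $E_0\cdot(\sigma_E(\sigma_E')^{-1})=E_0$, so $\tau:=\sigma_E(\sigma_E')^{-1}$ fixes $E_0$ and the claim gives $M^n_{E_0}\cdot\tau=M^n_{E_0}$. Multiplying on the right by $\sigma_E'$ and using that $\cdot$ is a right action yields $M^n_{E_0}\cdot\sigma_E=M^n_{E_0}\cdot\sigma_E'$, so $M^n_E$ does not depend on the choice of $\sigma_E$. For the equivariance $M^n_E\cdot\sigma=M^n_{E\cdot\sigma}$, observe that $|E\cdot\sigma|=|E|=k$ and that $E_0\cdot(\sigma_E\sigma)=(E_0\cdot\sigma_E)\cdot\sigma=E\cdot\sigma$, so $\sigma_E\sigma$ is an admissible choice of $\sigma_{E\cdot\sigma}$; hence $M^n_E\cdot\sigma=(M^n_{E_0}\cdot\sigma_E)\cdot\sigma=M^n_{E_0}\cdot(\sigma_E\sigma)=M^n_{E\cdot\sigma}$ by the (now justified) definition.

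The only real obstacle is the invariance claim of the first step, and within it the blockwise compatibility of the action with the product: this is the single place where the interaction between the algebra structure of $\WQSym$ and the position-permutation action must be checked, together with the elementary observation that $P_1^{k}$, being the sum of all words of length $k$, is symmetric. Once these are in hand, both statements of the lemma are purely formal consequences of the right-action axioms.
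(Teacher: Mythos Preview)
Your proof is correct and follows essentially the same approach as the paper's: both reduce well-definedness to the invariance of $M^n_{E_0}=\U_{n-k}\cdot P_1^{k}$ under the Young subgroup $\SG_{n-k}\times\SG_k$ (using that $\U_{n-k}$ was chosen $\SG_{n-k}$-stable and that $P_1^k$ is $\SG_k$-invariant), and both derive the equivariance by observing that $\sigma_E\sigma$ is a valid choice of $\sigma_{E\cdot\sigma}$. Your write-up is slightly more explicit about the blockwise compatibility of the action with the product and about why $P_1^k$ is symmetric, but there is no substantive difference in strategy.
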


\begin{proof}
We need to check that $M^{n}_{\{n-k+1,\dots,n\}} \cdot \sigma_E$ 
does not depend of the chosen permutation $\sigma_E$.
Let $\sigma_E$ and $\sigma'_E$ be permutations of size $n$ with
$$\{n-k+1,\dots,n\} \cdot \sigma_E=\{n-k+1,\dots,n\} \cdot \sigma'_E=E.$$
Then $\sigma_E=\tau \sigma'_E $ with 
$\{n-k+1,\dots,n\} \cdot \tau=\{n-k+1,\dots,n\}$,
that is $\tau \in \SG_{n-k} \times S_k$.
As $\{U_{n-k}\}$ and $P_1^k$ are respectively stable by the actions
of $\SG_{n-k}$ and $\SG_k$,
the space $M^{n}_{\{n-k+1,\dots,n\}}=U_{n-k} P_1^k$
is stable by $\tau$.
Hence
\[M^{n}_{\{n-k+1,\dots,n\}} \cdot \sigma_E= 
M^{n}_{\{n-k+1,\dots,n\}} \cdot \tau \sigma'_E
=M^{n}_{\{n-k+1,\dots,n\}} \cdot \sigma'_E,\]
and $M^{n}_E$ is well-defined.

To prove the second claim, let $k=|E|$.
Note that if
$\{n-k+1,\dots,n\}\cdot \sigma_E=E$ then 
$\{n-k+1,\dots,n\} \cdot \sigma_E \sigma=E \cdot \sigma$
and thus
\[
M^n_{E \cdot \sigma}= M^n_{\{n-k+1,\dots,n\}}\cdot \sigma_E \sigma
= M^n_E\sigma. \qedhere
\]
\end{proof}

Finally, denote by $r_{P_1}$ (resp. $l_{P_1}$) 
the right- (resp. left-) multiplication by $P_1$.
We also consider the operator
$\delta$ on word quasi-symmetric functions,
defined as follows:
\begin{itemize}
    \item if, for some packed word $v$,
        $u=v \cdot (m+1)$, where $m$ is the biggest letter in $v$,
        then $\delta(P_u)=P_v$;
    \item otherwise, $\delta(P_u)=0$.
\end{itemize}
We give a few trivial computational rules relating $r_{P_1}$, $l_{P_1}$,
$\delta$, and the symmetric group action.
In the following equation, $w$ is an element of $\WQSym_n$ or $\WQSym_{n-1}$
and $\sigma$ a permutation of $\SG_{n-1}$.
Then $\overline{\sigma}$ denotes the trivial extension of $\sigma$
to $\{1,\dots,n\}$ ($\overline{\sigma}(n)=n$)
and $\vec{\sigma}$ the permutation of $\SG_n$ defined
by $\vec{\sigma}(1)=1$
and $\vec{\sigma}(i+1)=\sigma(i)+1$
for $i$ in $\{1,\dots,n-1\}$. We have:
\begin{align}
    \delta(w  P_1)&= w, \label{EqDeltaRP1}\\
    \text{if $\deg(w)=n-1>0$ ; }
    \delta(P_1  w)&= P_1  \delta(w) ; \label{EqDeltaLP1}\\
    \delta \big( w\cdot \overline{\sigma} \big)&= \big( \delta(w) \big) \cdot \sigma;
    \label{EqSigmaDelta}\\
    P_1 (w \cdot \sigma) &= (P_1 w) \cdot \vec{\sigma};
    \label{EqSigmaLP1}\\
    (w \cdot \sigma) P_1 &=  (w P_1) \cdot \overline{\sigma}.\label{EqSigmaRP1}
\end{align}

\begin{lemma}
    We have the following compatibility properties between spaces $M^{n}_E$
    and operators $\delta$ and $r_{P_1}$.
    \begin{itemize}
        \item If $E$ is a subset of $\{1,\dots,n-1\}$, then
            \begin{align*}
            r_{P_1}(M_E^{n-1}) &= M_{E \sqcup \{n\}}^n;\\
            l_{P_1}(M_E^{n-1}) &= M_{\{1\} \sqcup \vec{E}}^n,
            \end{align*}
            where $\vec{E}=\{i+1,\ i \in E\}$.
        \item Let $r<n$ be a non-negative integer.
            If $E \nsubseteq \{n-r+1,\dots,n\}$, then
            \[\delta^r(M_E^n) \subset \KK_{n-r}.\]
    \end{itemize}
    \label{LemCompatibilityMDeltaR}
\end{lemma}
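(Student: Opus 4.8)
The plan is to treat the two parts separately, reducing each identity to the \emph{canonical} subsets $\{n-k+1,\dots,n\}$ and then spreading it to an arbitrary $E$ by equivariance. Throughout I write $L_n:=P_1\cdot\WQSym_{n-1}$ for the degree-$n$ component of the left ideal generated by $P_1$, so that $L_n\subseteq\KK_n$ by the very definition of $\KK_n$.

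For the first part I would first check the two formulas on the canonical set $\{n-k,\dots,n-1\}$ of $\WQSym_{n-1}$, where $M^{n-1}_{\{n-k,\dots,n-1\}}=U_{n-1-k}\,P_1^{k}$, and then propagate by \eqref{EqSigmaRP1}, \eqref{EqSigmaLP1} and Lemma \ref{LemMEWellDefined}. The right formula is immediate on the canonical set, since $r_{P_1}$ merely appends a factor: $U_{n-1-k}P_1^{k}\mapsto U_{n-1-k}P_1^{k+1}=M^{n}_{\{n-k,\dots,n\}}$, and $\{n-k,\dots,n-1\}\sqcup\{n\}=\{n-k,\dots,n\}$. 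For a general $E=M^{n-1}_{\mathrm{can}}\cdot\tau$ one writes $(M^{n-1}_{\mathrm{can}}\cdot\tau)P_1=(M^{n-1}_{\mathrm{can}}P_1)\cdot\overline{\tau}$ by \eqref{EqSigmaRP1}, and Lemma \ref{LemMEWellDefined} turns this into $M^{n}_{\overline\tau(\{n-k,\dots,n\})}=M^{n}_{E\sqcup\{n\}}$, because $\overline\tau$ fixes $n$ and carries $\{n-k,\dots,n-1\}$ to $E$. The left formula is handled in the same spirit; the only extra ingredient is the elementary relation $P_1\cdot w=(w\cdot P_1)\cdot\gamma$, where $\gamma$ is the cycle bringing the last letter to the front, which follows at once from $P_1=\sum_i a_i$ monomial by monomial. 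Feeding this into the canonical case and then propagating with \eqref{EqSigmaLP1} and Lemma \ref{LemMEWellDefined} yields $l_{P_1}(M^{n-1}_E)=M^{n}_{\{1\}\sqcup\vec E}$.

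For the second part I would deliberately avoid computing $\delta(M^n_E)$ directly (this is delicate when $n\notin E$) and instead exploit that $\delta$ behaves well on the left ideal. Set $e_0:=\min E$; the hypothesis $E\nsubseteq\{n-r+1,\dots,n\}$ says precisely that $e_0\le n-r$. Choose $\sigma\in\SG_n$ fixing $\{n-r+1,\dots,n\}$ pointwise with $\sigma(1)=e_0$ (possible since both $1$ and $e_0$ lie in the freely permutable block $\{1,\dots,n-r\}$), and put $F:=\sigma^{-1}(E)$, so that $1\in F$ and $\sigma(F)=E$. By the left formula just proved, $1\in F$ forces $M^n_F\subseteq L_n$, and Lemma \ref{LemMEWellDefined} gives $M^n_E=M^n_F\cdot\sigma$; thus every element of $M^n_E$ is of the form $x\cdot\sigma$ with $x\in L_n=P_1\WQSym_{n-1}$. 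Since $\sigma$ fixes the top $r$ letters, iterating \eqref{EqSigmaDelta} gives $\delta^r(x\cdot\sigma)=\delta^r(x)\cdot\sigma'$ with $\sigma'=\sigma|_{\{1,\dots,n-r\}}\in\SG_{n-r}$, while iterating \eqref{EqDeltaLP1} on $x=P_1w$ gives $\delta^r(x)=P_1\,\delta^r(w)\in L_{n-r}\subseteq\KK_{n-r}$ (the degree conditions required by \eqref{EqDeltaLP1} hold because $r<n$). As $\KK_{n-r}$ is stable under $\SG_{n-r}$, this gives $\delta^r(M^n_E)\subseteq\KK_{n-r}$.

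The main obstacle is the left-multiplication identity: unlike $r_{P_1}$, the operator $l_{P_1}$ does not present $M^n_{\{1\}\sqcup\vec E}$ in canonical form, so one must convert left multiplication into right multiplication by $P_1$ via the cyclic relation above and then keep careful track of the permutation conventions to see that the new index lands at $1$. It is also worth stressing why the second part must be routed through $L_n$ rather than through a naive induction on $r$: the subspace $\KK$ is \emph{not} stable under $\delta$ — for instance $\delta(M^n_{\{n\}})=\delta(U_{n-1}P_1)=U_{n-1}$ by \eqref{EqDeltaRP1}, which is the chosen complement of $\KK_{n-1}$ and hence misses $\KK_{n-1}$ altogether — so one cannot simply iterate an inclusion $\delta(\KK)\subseteq\KK$. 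The reduction succeeds precisely because $\delta$ \emph{is} well-behaved on the left ideal through \eqref{EqDeltaLP1}, and because the residual permutation, being supported on $\{1,\dots,n-r\}$, commutes past $\delta^r$ via \eqref{EqSigmaDelta}.
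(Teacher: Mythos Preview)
Your proof is correct and follows essentially the same route as the paper's: for the first item you reduce to the canonical subset and propagate via \eqref{EqSigmaRP1}, \eqref{EqSigmaLP1}, and Lemma~\ref{LemMEWellDefined}, and for the second item you use the $l_{P_1}$ formula to place $M^n_F$ inside $P_1\cdot\WQSym_{n-1}$ when $1\in F$, commute $\delta^r$ past $P_1$ with \eqref{EqDeltaLP1}, and finish with \eqref{EqSigmaDelta} and the $\SG_{n-r}$-stability of $\KK_{n-r}$. One terminological slip: in the paper's convention (and standard ring-theory usage) the \emph{left} ideal generated by $P_1$ is $\WQSym\cdot P_1$, so your $L_n=P_1\cdot\WQSym_{n-1}$ is not contained in $\KK_n$ ``by the very definition'' but rather via the cyclic relation $P_1\,w=(w\,P_1)\cdot\gamma$ you already record --- exactly the step the paper makes explicit when it writes $P_1\,\delta^r(w_{n-1})=(\delta^r(w_{n-1})\,P_1)\cdot\sigma$.
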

\begin{proof}
The first item follows directly from the relevant definitions
and Equations \eqref{EqSigmaLP1} and \eqref{EqSigmaRP1}.

Let us prove the second item.
We begin by the case where $1 \in E$,
that is $E=1 \sqcup \vec{E'}$ for some set $E'$.
Using the first item, $M^n_E=l_{P_1}(M^{n-1}_{E'})$.
In particular, every element $m_E$ of $M^n_E$ can be written as $P_1 \cdot w_{n-1}$
for some element $w \in \WQSym_{n-1}$.
Then, using \eqref{EqDeltaLP1},
\[\delta^r(m_E)=\delta^r(P_1 w_{n-1}) = P_1 \delta^r(w_{n-1}) 
=  \big( \delta^r(w_{n-1}) P_1 \big) \cdot \sigma, \]
where $\sigma \in \SG_{n-r}$
has word notation $23\dots(n-r)1$.
This shows that $\delta^r(m_E)$ belongs to $\KK_{n-r}$, as required.

Now, let $E$ be some set not contained in $\{n\!-\!r\!+\!1,\dots,n\}$.
Choose a permutation $\tau$ in $S_n$ fixing $n\!-\!r\!+\!1,\dots,n$ and
such that $E \cdot \tau$ contains $1$.
Denote by $\check{\tau} \in \SG_{n-r}$ its restriction to $\{1,\dots,n-r\}$.
Iterating \eqref{EqSigmaDelta}, we get that, for $m_E$ in $M_E^n$,
\[\delta^r(m_E) = \big[ \delta^r \big( m_E \cdot \tau\big)\big] \cdot \check{\tau}^{-1} .\]
But $m_E\tau \in M_{\tau(E)}^n$ and hence, using the case considered above,
$\delta^r \big( m_E\tau\big)$ is in $\KK_{n-r}$.
As $\KK_{n-r}$ is stable by the action of $\SG_{n-r}$,
$\delta^r(m_E)$ is also in $\KK_{n-r}$, which is the desired result.
\end{proof}
    
\begin{proposition}
With the notation above,
\[\WQSym_n = \bigoplus_{E \subseteq \{1,\dots,n\}} M^{n}_E.\]
    \label{PropKDirectSum}
\end{proposition}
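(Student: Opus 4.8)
The plan is to prove the two halves separately: that the spaces $M^n_E$ span $\WQSym_n$, and that their sum is direct. Spanning follows fairly mechanically from the preceding lemmas by induction on $n$; the directness is the real content, and I would isolate it as a self-contained argument using the operator $\delta$.

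I first record why the empty set is special. Since $M^n_\emptyset = U_n$ and $\WQSym_n = U_n \oplus \KK_n$ by the choice of $U_n$, while every $M^n_E$ with $E \neq \emptyset$ lies in $\KK_n$ (indeed $M^n_{\{n-k+1,\dots,n\}} = U_{n-k}P_1^k \subseteq \WQSym_{n-1}P_1 \subseteq \KK_n$ for $k \geq 1$, and $\KK_n$ is $\SG_n$-stable), it is enough to show $\KK_n = \sum_{E\neq\emptyset} M^n_E$ with this sum direct, and then append the complementary summand $M^n_\emptyset = U_n$. For the spanning part I would induct on $n$: the degree-$n$ component of the left ideal generated by $P_1$ equals $r_{P_1}(\WQSym_{n-1})$, so the induction hypothesis $\WQSym_{n-1} = \sum_E M^{n-1}_E$ together with the first item of Lemma~\ref{LemCompatibilityMDeltaR} gives $r_{P_1}(\WQSym_{n-1}) = \sum_{E \subseteq \{1,\dots,n-1\}} M^n_{E \sqcup \{n\}}$. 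Closing under the $\SG_n$-action and using $M^n_E \cdot \sigma = M^n_{E\cdot\sigma}$ (Lemma~\ref{LemMEWellDefined}), the indices $(E\sqcup\{n\})\cdot\sigma$ sweep out all nonempty subsets, whence $\KK_n = \sum_{E\neq\emptyset} M^n_E$ and therefore $\WQSym_n = \sum_E M^n_E$.

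The directness is where the work lies, and I would prove it by induction on the cardinality $k$: in any relation $\sum_E m_E = 0$ with $m_E \in M^n_E$, I claim all summands indexed by a $k$-set vanish, assuming the same for all smaller cardinalities. Granting this hypothesis, the relation collapses to $\sum_{|E|\geq k} m_E = 0$. Fixing a target $E_0$ with $|E_0| = k$, I choose (by transitivity of the $\SG_n$-action on $k$-subsets) a permutation $\sigma$ with $E_0\cdot\sigma = W := \{n-k+1,\dots,n\}$, and apply $\cdot\sigma$ followed by $\delta^k$. The separating mechanism is this: iterating $\delta\, r_{P_1} = \Id$ from \eqref{EqDeltaRP1} shows that $\delta^k$ maps $M^n_W = U_{n-k}P_1^k$ isomorphically onto $U_{n-k}$, so the $E_0$-term contributes an element $w \in U_{n-k}$ which is zero if and only if $m_{E_0}=0$; every remaining term is indexed by a set $F\cdot\sigma$ with $|F\cdot\sigma|\geq k$ and $F\cdot\sigma \neq W$, hence $F\cdot\sigma \nsubseteq W$, so by the second item of Lemma~\ref{LemCompatibilityMDeltaR} its $\delta^k$-image lies in $\KK_{n-k}$. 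Therefore $w \in U_{n-k}\cap\KK_{n-k} = \{0\}$, forcing $m_{E_0}=0$; since $E_0$ was an arbitrary $k$-set, this closes the induction. The base case $k=0$ is exactly the isolation of $M^n_\emptyset = U_n$ from $\KK_n$ noted above. Combining spanning with directness gives $\WQSym_n = \bigoplus_E M^n_E$.

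The delicate point, and the main obstacle to watch, is verifying that under $\delta^k$ the \emph{only} set of cardinality $\geq k$ whose image can escape $\KK_{n-k}$ is the canonical window $W$. This is precisely what the strict containment test $F\cdot\sigma\nsubseteq W$ secures: it is automatic when $|F\cdot\sigma| > k$, and when $|F\cdot\sigma| = k$ it holds because $F\cdot\sigma\neq W$. It is essential that the cardinality induction runs from small sets to large ones, so that at stage $k$ all lower-cardinality terms (which \emph{could} sit inside $W$ and whose $\delta^k$-images are not controlled by Lemma~\ref{LemCompatibilityMDeltaR}) have already been eliminated; handling the cardinalities in any other order is exactly what would break the separation argument.
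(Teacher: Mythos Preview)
Your proof is correct and follows essentially the same route as the paper: spanning by induction on $n$ via $r_{P_1}$ and the $\SG_n$-action, and directness by peeling off cardinalities one at a time using $\delta^k$ after permuting the target set to the canonical window $\{n-k+1,\dots,n\}$. The paper's write-up works through the cases $k=0,1,2$ explicitly and then says ``iterate,'' whereas you package this as a clean induction on $k$; but the mechanism (separating the $U_{n-k}$-contribution from the $\KK_{n-k}$-contributions via Lemma~\ref{LemCompatibilityMDeltaR}) is identical.
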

\begin{proof}
Let us first prove that the sum is indeed a direct sum.
Consider a vanishing linear combination
$\sum_{E \subseteq \{1,\dots,n\}} m_E=0$
with $m_E=u_E P_1^{|E|} \cdot \sigma_E\in M^{n}_E$ (here, $u_E$ lies in $U_{n-|E|}$).
First, write 
\[m_{\emptyset}
  =-\sum_{\gf{E \subseteq \{1,\dots,n\}}{E \neq \emptyset}}
     m_E\]
The left-hand side is in $\U_n$ by definition, while the right-hand side is in
$\KK_n$ (case $r=0$ of the second item of Lemma
\ref{LemCompatibilityMDeltaR}).
Thus $m_{\emptyset}$ necessarily vanishes.

Note that $\delta(m_{\{n\}})=u_{\{n\}}$ lies in $\U_{n-1}$.
But by linearity of $\delta$,
\[\delta(m_{\{n\}})=
  -\sum_{\gf{E \subseteq \{1,\dots,n\}}{E \neq \emptyset,\{n\} }}
   \delta(m_E). \]
But, using the second item of Lemma \ref{LemCompatibilityMDeltaR} for $r=1$,
we get that $\delta(m_E)$ lies in $\KK_{n-1}$ for $E \neq \emptyset,\{n\}$.
Hence, $\delta(m_{\{n\}})=u_{\{n\}}$ lies also in $\KK_{n-1}$ and vanishes.
The same arguments applied to $\sum_E m_E\cdot (i,n)$ imply that
$m_{\{i\}}=0$ (here, $i$ is an positive integer smaller than $n$ and $(i,n)$
is the transposition in $\SG_n$ exchanging $i$ and $n$).

Then we write that
\[\delta^2(m_{\{n-1,n\}})
  = -\sum_{\gf{E \subseteq \{1,\dots,n\}}{|E| \ge 2, E \neq \{n-1,n\} }}
\delta^2(m_E).\]
The condition $|E| \ge 2$ arises because we have already proved that $m_E=0$
for $|E| \le 1$.
Using the second item of Lemma \ref{LemCompatibilityMDeltaR} for $r=2$,
we immediately see that the right-hand side lies in $\KK_{n-2}$.
But $\delta^2(m_{\{n-1,n\}})=u_{\{n-1,n\}}$ also lies in $U_{n-2}$
and thus vanishes.

Applying the same arguments to $\sum_E m_E \cdot \sigma$ for well-chosen permutations
$\sigma$, we prove that $m_E=0$ for any pair $E$ included in $\{1,\dots,n\}$.
    
By iterating the same arguments, we conclude that $m_E$ vanishes for any
subset $E$ of $\{1,\dots,n\}$. This proves that the sum in the statement is
direct.  Let us prove that it is indeed $\WQSym_n$.

We proceed by induction on $n$.
For $n=0$, $\WQSym_0 \simeq \C$ while $\KK_0=\{0\}$.
Hence $U_0\simeq \C$ and $M_\emptyset^0=U_0 \simeq \C$.
Therefore, the result is true in this case.

Assume that it is true for $n-1$.
Consider an element $w_{n-1} P_1$, where $w_{n-1}$ lies in $\WQSym_{n-1}$.
By induction hypothesis
\[w_{n-1}= \sum_{E \subset \{1,\dots,n-1\}} m'_E, \]
where $m'_E \in M^{n-1}_E$.

Using the first item of Lemma \ref{LemCompatibilityMDeltaR},
for any $E \subset \{1,\dots,n-1\}$,
the product $m'_E P_1$ lies in $M^{n}_{E \sqcup \{n\}}$.
This proves that $w_{n-1} P_1$ lies in 
\[\bigoplus_{E \subseteq \{1,\dots,n\}} M^{n}_E.\]
Besides, this sum is invariant by the action of $\SG_n$, because of Lemma
\ref{LemMEWellDefined}.
As it contains elements of the form $w_{n-1} P_1$, it contains $\KK_n$.
Furthermore, it contains $M^{n}_{\emptyset}=U_n$.
Finally, it is equal to $\WQSym_n$.
\end{proof}

Recall that $\dim(\WQSym_n)=OB(n)$ the $n$-th ordered Bell number.
Denote by $\dimKK_n$ the dimension of $\KK_n$.
The spaces $M_E^{n}$ have the same dimension as $U_{n-|E|}$, that is 
$OB(n-|E|) - \dimKK_{n-|E|}$.
Therefore, we have the following immediate numerical corollary
of Proposition \ref{PropKDirectSum}.
\begin{corollary}
    For $n \ge 1$,
    \[OB(n)= \sum_{j=0}^n \binom{n}{j} (OB(n-j) - \dimKK_{n-j}).\]
\end{corollary}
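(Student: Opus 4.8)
The plan is to simply compute dimensions on both sides of the direct sum decomposition established in Proposition~\ref{PropKDirectSum}. First I would record the dimension of a single summand $M^n_E$. By construction, $M^n_E$ is the image of $\U_{n-|E|}$ under right-multiplication by $P_1^{|E|}$ followed by the action of a permutation $\sigma_E$. The permutation action is a linear isomorphism, hence dimension-preserving, so it suffices to control $\dim\big(\U_{n-k}\cdot P_1^k\big)$ for $k=|E|$. Here the key observation is that right-multiplication $r_{P_1}$ is injective: Equation~\eqref{EqDeltaRP1} gives $\delta(w P_1)=w$, so $\delta\circ r_{P_1}=\Id$, and iterating shows that $r_{P_1}^k$ is injective as well. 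Consequently $\dim M^n_E=\dim \U_{n-k}$, and since $\U_{n-k}$ was chosen as a complement of $\KK_{n-k}$ in $\WQSym_{n-k}$, this dimension equals $OB(n-k)-\dimKK_{n-k}$.

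With this in hand, the second step is purely combinatorial. Taking dimensions in the decomposition $\WQSym_n=\bigoplus_{E\subseteq\{1,\dots,n\}} M^n_E$ yields
\[ OB(n)=\sum_{E\subseteq\{1,\dots,n\}}\big(OB(n-|E|)-\dimKK_{n-|E|}\big). \]
Since the summand depends on $E$ only through its cardinality, I would group the subsets according to their size $j=|E|$; there are exactly $\binom{n}{j}$ subsets of $\{1,\dots,n\}$ of cardinality $j$, and collecting them gives
\[ OB(n)=\sum_{j=0}^n \binom{n}{j}\big(OB(n-j)-\dimKK_{n-j}\big), \]
which is the claimed identity.

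There is essentially no obstacle here, as all the substance is already contained in Proposition~\ref{PropKDirectSum} and the injectivity relation~\eqref{EqDeltaRP1}. The only point meriting a word of care is the equality $\dim M^n_E=\dim\U_{n-|E|}$, which is precisely where the injectivity of $r_{P_1}$ and the invertibility of the permutation action are used; once this is noted, the corollary reduces to a one-line dimension count.
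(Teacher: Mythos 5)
Your proof is correct and follows essentially the same route as the paper: take dimensions in the decomposition $\WQSym_n=\bigoplus_{E\subseteq\{1,\dots,n\}} M^{n}_E$ of Proposition~\ref{PropKDirectSum}, identify $\dim M^{n}_E$ with $\dim \U_{n-|E|}=OB(n-|E|)-\dimKK_{n-|E|}$, and collect the subsets $E$ according to their cardinality $j$, of which there are $\binom{n}{j}$. The paper states the equality $\dim M^{n}_E=\dim \U_{n-|E|}$ without comment, whereas you justify it by noting that the permutation action is invertible and that $r_{P_1}$ is injective thanks to the left inverse $\delta$ from \eqref{EqDeltaRP1}; this is a minor (and welcome) elaboration, not a different approach.
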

With this relation and the base case $\dimKK_{0}=0$,
the numbers $\dimKK_j$ can be computed inductively.
The first values are $0$, $1$, $1$, $7$, $37$, $271$.
This sequence appears in the Online Encyclopedia of Integer Sequences
\cite[A089677]{Sloane}.
We shall give a simple combinatorial interpretation of it,
not mentioned in \cite{Sloane}.

Recall that $OB(n)$ counts the set-compositions 
({\em i.e.}, ordered set partitions) of $\{1,\dots,n\}$.
Denote by $OB_{\text{odd}}(n)$ the number of set compositions with an odd
number of parts.
A set-composition with an odd number of parts can be specified as follows
\begin{itemize}
\item the first set $I_1$ of the set composition;
\item a set composition of $\{1,\dots,n\} \setminus I_1$ in an even number of
parts.
\end{itemize}
For a given $j$, there are $\binom{n}{j}$ sets $I_1$ of size $j$, and,
for each of them, there are exactly
$OB(n-j) - OB_{\text{odd}}(n-j)$ set compositions with an
even number of parts of $\{1,\dots,n\} \setminus I_1$.
Thus, the sequence $(OB_{\text{odd}}(n))_{n\ge 0}$
satisfies the following induction
\[OB_{\text{odd}}(n) = \sum_{h=1}^n \binom{n}{h} 
(OB(n-h) - OB_{\text{odd}}(n-h)),\]
together with base case $OB_{\text{odd}}(0)=0$.
It is the same induction and base case as for $\dimKK_n$.
Hence one has the following combinatorial interpretation for $\dimKK_n$
\begin{proposition}
    $\dimKK_n$ counts the set-compositions of $\{1,\dots,n\}$
    with an odd number of parts.
\end{proposition}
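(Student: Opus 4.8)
The plan is to show that the two sequences $\dimKK_n$ and $OB_{\text{odd}}(n)$ satisfy the very same recurrence with the same initial value, and then conclude by induction on $n$. All the ingredients have in fact already been assembled above, so the proof reduces to matching two recursions.

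First I would extract a recurrence for $\dimKK_n$ from the numerical Corollary preceding the statement. Starting from
\[OB(n)= \sum_{j=0}^n \binom{n}{j}\bigl(OB(n-j) - \dimKK_{n-j}\bigr)\]
and isolating the $j=0$ summand, which is exactly $OB(n)-\dimKK_n$, the $OB(n)$ terms cancel and one is left, for $n\ge 1$, with
\[\dimKK_n = \sum_{j=1}^n \binom{n}{j}\bigl(OB(n-j) - \dimKK_{n-j}\bigr),\]
together with the base case $\dimKK_0 = 0$ recorded earlier.

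Next I would invoke the combinatorial recurrence for $OB_{\text{odd}}(n)$ justified just above the statement: a set composition of $\{1,\dots,n\}$ with an odd number of parts is specified uniquely by its nonempty first block $I_1$ together with a set composition of the complement $\{1,\dots,n\}\setminus I_1$ into an even number of parts. Summing over $h=|I_1|$ yields $\binom{n}{h}$ choices for $I_1$ and, by definition, exactly $OB(n-h) - OB_{\text{odd}}(n-h)$ even-part set compositions of a set of size $n-h$, whence
\[OB_{\text{odd}}(n) = \sum_{h=1}^n \binom{n}{h}\bigl(OB(n-h) - OB_{\text{odd}}(n-h)\bigr),\]
with $OB_{\text{odd}}(0)=0$.

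These two recurrences are literally identical and the two sequences agree at $n=0$, so a routine induction on $n$ forces $\dimKK_n = OB_{\text{odd}}(n)$ for every $n\ge 0$, which is the assertion. I do not expect any genuine obstacle here: once the Corollary is available, the only real content is the bijective observation that peeling off the first block of an odd-length set composition leaves an even-length one, and this is precisely what makes the recursion for $\dimKK_n$ coincide with that for $OB_{\text{odd}}(n)$.
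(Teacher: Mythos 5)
Your proposal is correct and is essentially identical to the paper's own argument: the paper likewise derives the recurrence for $\dimKK_n$ from the numerical corollary, establishes the same recurrence for $OB_{\text{odd}}(n)$ by peeling off the first block of an odd-length set composition, and concludes since the recursions and base cases coincide. Your only addition is making explicit the isolation of the $j=0$ term, which the paper leaves implicit.
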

\begin{corollary}
    \label{cor:DimKerBig}
    The dimension of the kernel of $\Phiab$ in degree $n$
    is at least the number of set-compositions of $\{1,\dots,n\}$ with an odd
    number of parts.
\end{corollary}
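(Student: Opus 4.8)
The plan is to read off this bound immediately from two facts already at hand: the inclusion of $\KK_n$ into the degree-$n$ part of the kernel of $\Phiab$, noted right after the definition of $\KK_n$, and the combinatorial interpretation of $\dimKK_n$ furnished by the preceding proposition. No fresh argument is needed, so the corollary amounts to assembling these ingredients.

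In detail, I would first recall why $\KK_n$ sits inside the kernel. The element $P_1(\A)$ lies in $\Ker(\Phiab)$, and since this kernel is a two-sided ideal it contains the whole left ideal generated by $P_1$, in particular its degree-$n$ elements. As $\Phiab$ is compatible with the reordering of letters inside noncommutative monomials, the degree-$n$ component of $\Ker(\Phiab)$ is stable under the $\SG_n$-action; hence it also contains the $\SG_n$-images of those degree-$n$ elements. Since these two families span $\KK_n$ by definition, we obtain $\KK_n \subseteq \Ker(\Phiab)$. Consequently the dimension of the kernel in degree $n$ is at least $\dim \KK_n = \dimKK_n$, and the previous proposition identifies $\dimKK_n$ with the number of set-compositions of $\{1,\dots,n\}$ having an odd number of parts, which is exactly the claimed lower bound.

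There is no real obstacle here: the entire difficulty has been discharged upstream, in the decomposition of Proposition \ref{PropKDirectSum} and in the recursive computation that matches $\dimKK_n$ with the odd set-composition count (both satisfy the same induction and base case). The corollary is pure bookkeeping; the only point one must not overlook is the $\SG_n$-stability of the degree-$n$ kernel, which rests on the compatibility of $\Phiab$ with variable reordering recorded when the map was introduced.
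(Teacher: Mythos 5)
Your proposal is correct and follows exactly the paper's route: the paper likewise treats this corollary as immediate from the inclusion $\KK_n \subseteq \Ker(\Phiab)$ (justified, as you do, by $P_1(\A)$ lying in the kernel, the kernel being a two-sided ideal, and its degree-$n$ part being $\SG_n$-stable since $\Phiab$ commutes with letter reordering) combined with the preceding proposition identifying $\dimKK_n$ with the number of odd set-compositions. Nothing is missing; the real work is indeed upstream in Proposition \ref{PropKDirectSum} and the recursion argument, as you note.
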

\begin{OpenProblem}
    Find a basis of $\KK_n$ indexed by set-compositions of $\{1,\dots,n\}$
        with an odd number of parts.
\end{OpenProblem}
\begin{remark}{\rm
    One also has the remarkable relation
    \[\dimKK_n=\frac{1}{2}(OB(n) - (-1)^n).\]
    This identity, suggested by V. Jovovic \cite[A089677]{Sloane},
    can be proved easily from our combinatorial interpretation,
    but is not useful in this paper.
}
\end{remark}

\subsection{Functions $N_G$ in noncommuting variables}
\label{SubsectNonComNG}
In the next Section, we shall exhibit an explicit independent homogeneous family in $\SolNC'$.
As $\Phiab$ is surjective, this will give us, for each $n \ge 1$, a lower bound on the dimension
of the image of $\Phiab$ in degree $n$.
This will be done by lifting the construction of $N_G$, done in Section~\ref{SubsectNG},
to the noncommutative world.

Let us first lift the one-alphabet function $F_G$.
Take as data a {\em labelled} bipartite graph $\bm{G}$
(with two types of edges) with vertex set
$V= V_1 \sqcup V_2 = \{ 1, \dots, n \}$.
Then we define the noncommutative analog $\bm{F_G}$ of $F_G$ as follows:
\[\bm{F_G}(a_1,a_2,\dots)
  =\sum_{\gf{r:V \to \N}{\text{with order condition}}}
a_{r(1)} a_{r(2)} \dots a_{r(n)}.\]
Here, as usual, the $a_i$ are noncommuting variables.
Clearly, $\bm{F_G}$ is a word quasi-symmetric function.

In the same way, we can define a noncommutative analog $\bm{N_G}$ of $N_G$:
\[\bm{N_G}\left( \begin{array}{ccc}
            b_1 & b_2 & \dots\\
                d_1 & d_2 & \dots
            \end{array} \right)=
\sum_{\gf{r:V \to \N}{\text{with order condition}}}
\bd_{r(1)} \bd_{r(2)} \dots \bd_{r(n)},\]
where we use the abusive 
shorthand notation $\bd_{r(i)}=b_{r(i)}$ for $i \in V_1$
and $\bd_{r(i)}=d_{r(i)}$ for $i \in V_2$.

\begin{example}\label{ex:NG}{\rm
    Consider the graph $\bm{G_\ex}$ drawn on Figure \ref{fig:example_labelled_graph}.
    This is a labelled version of the graph $G_\ex$ of Figure \ref{fig:example_unlabelled_graph}.
    As on this figure, vertices in $V_1$ (resp. $V_2$) are drawn in white (resp. black).
    Edges in $E_{1,2}$ (resp. $E_{2,1}$) are represented directed from their extremity in 
    $V_1$ to their extremity in $V_2$ (resp. from their extremity in $V_2$
    to their extremity in $V_1$).

    \begin{figure}
        \[\begin{tikzpicture}[font=\scriptsize]
            \tikzstyle{bv}=[circle,fill=black,inner sep=0.1mm,minimum size=2mm]
            \tikzstyle{wv}=[circle,draw=black,inner sep=0.1mm,minimum size=2mm]
            \node[wv] (v1) at (0,0) {$2$};
            \node[wv] (v2) at (0,-1) {$3$};
            \node[bv] (v3) at (1,0) {\color{white} $1$};
            \node[bv] (v4) at (1,-1) {\color{white} $5$};
            \node[wv] (v5) at (2,-.5) {$6$};
            \node[bv] (v6) at (3,-.5) {\color{white} $4$};
            \draw[->] (v1) -- (v3);
            \draw[->] (v1) -- (v4);
            \draw[->] (v2) -- (v3);
            \draw[->] (v2) -- (v4);
            \draw[->] (v3) -- (v5);
            \draw[->] (v4) -- (v5);
            \draw[->] (v5) -- (v6);
        \end{tikzpicture}\]
        \caption{Example of a labelled bipartite graph $G$.}
        \label{fig:example_labelled_graph}
    \end{figure}
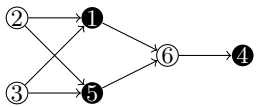

    Let $r$ be a function from its vertex set to $\N$.
    Define 
    \[e:=r(2),\ f:=r(3),\ g:=r(1),\ h:=r(5),\ i:=r(6),\ j:=r(4).\]
    Then, by definition, $r$ satisfy the order condition if and only if
    \[e,f \le g,h < i \le j,\]
so one has,
\[\bm{N_{G_\ex}}\left( \begin{array}{ccc}
        b_1 & b_2 & \dots\\       
            d_1 & d_2 & \dots         
        \end{array} \right) = \sum_{e,f \le g,h < i \le j}
        d_g \, b_e\, b_f\, d_j\, d_h\, b_i,\]
 which is a noncommutative version of the function $N_{G_\ex}$
 given in Example \ref{ex:NG}.
    }
\end{example}
The following extension of Lemma \ref{LemNGInSol} has the very same proof.
\begin{lemma}
Let $G$ be bipartite graph as above.
Assume that each element in $V$ is the extremity
of at least one edge in $E_{1,2}$.
Then the function $\bm{N_G}$ belongs to $\SolNC'$.
    \label{LemNGInSolNC}
\end{lemma}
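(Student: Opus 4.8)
The plan is to observe that the proof of Lemma~\ref{LemNGInSol} transposes to the noncommutative setting essentially word for word, and to isolate the single step where noncommutativity could conceivably cause trouble. Concretely, I would verify that $\bm{N_G}$ satisfies Equations~\eqref{EqDZero} and~\eqref{EqBZero}, following the very same combinatorial bijection on index functions $r\colon V\to\N$ used in the commutative case.

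For Equation~\eqref{EqDZero}, I would reuse the substitution $b'_j,d'_j$ introduced in the proof of Lemma~\ref{LemNGInSol}, specialize $d_i=0$, and note as before that the surviving monomials correspond to the $i$-avoiding functions $r$ (those with $r(v_2)\neq i$ for all $v_2\in V_2$). The case $i=m$ is handled exactly as in the commutative argument: the hypothesis that every vertex of $V_1$ is an endpoint of an edge of $E_{1,2}$, combined with $m$-avoidance, forces the value $m$ to be used by no vertex at all. For $i<m$ I would keep the same index-shifting map $\Phi\colon r\mapsto r'$ and the same description of its fibers $\Phi^{-1}(r')$; the verification that $r'$ satisfies the order condition, and that $i$-avoiding preimages do too, is unchanged, since it concerns only inequalities between the \emph{values} $r(v)$.

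The one point that genuinely needs noncommutative care --- and the step I would treat as the heart of the argument --- is the fiberwise identity. In the commutative proof it reads
\[\sum_{r\in\Phi^{-1}(r')}\Big(\prod_{v_1\in V_1}p_{r(v_1)}\prod_{v_2\in V_2}q_{r(v_2)}\Big)
=\prod_{v_1\in V_1}p'_{r'(v_1)}\prod_{v_2\in V_2}q'_{r'(v_2)},\]
and its noncommutative analogue is
\[\sum_{r\in\Phi^{-1}(r')}\bd_{r(1)}\bd_{r(2)}\cdots\bd_{r(n)}
=\bd'_{r'(1)}\bd'_{r'(2)}\cdots\bd'_{r'(n)},\]
where $\bd'$ denotes the primed variables ($b'$ on $V_1$, $d'$ on $V_2$). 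The key observation is that $\Phi$ alters only the \emph{subscripts} $r(v)$, never the vertex order $1,2,\dots,n$ that fixes the positions of the letters in the word. Hence the fiber $\Phi^{-1}(r')$ is a product of independent choices, one per vertex: a vertex $v_1\in V_1$ with $r'(v_1)=i$ ranges over $r(v_1)\in\{i,i+1\}$ (contributing $b_i+b_{i+1}=b'_i$ in its fixed slot), a vertex $v_2\in V_2$ with $r'(v_2)=i$ is forced to $r(v_2)=i+1$ (contributing $d_{i+1}=d'_i$), and every other vertex has a unique shifted value matching the corresponding primed variable. Since expanding an ordered product of sums, $(\sum_\alpha x_\alpha)(\sum_\beta y_\beta)=\sum_{\alpha,\beta}x_\alpha y_\beta$, is valid in any algebra, distributing the sum over the fiber across the fixed positions yields precisely the right-hand side. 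Summing over all $r'$ satisfying the order condition then gives~\eqref{EqDZero}.

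Finally, Equation~\eqref{EqBZero} is obtained by the symmetric argument (specializing $b_i=0$ and exchanging the roles of the two alphabets), so no new idea is required. I expect the only subtlety worth spelling out to be the factorization across fixed positions above; everything else is the commutative proof read verbatim.
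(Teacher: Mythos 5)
Your proposal is correct and takes exactly the paper's route: the paper's entire proof of this lemma is the remark that it ``has the very same proof'' as Lemma~\ref{LemNGInSol}, which is precisely the plan you execute. Your explicit verification of the fiberwise identity --- that the bijection $\Phi$ alters only subscripts while the letter positions $1,\dots,n$ stay fixed, so the sum over each fiber factors as an ordered product of per-position sums, valid in any algebra --- is exactly the (unstated) reason the paper's one-line claim is justified.
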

Proposition \ref{PropFormulaNG} can also be lifted directly.
\begin{proposition}
    For any labelled bipartite graph $G$ with vertex set 
    $V=V_1 \sqcup V_2= \{1,\dots,n\}$,
    \[ \bm{N_G}\left( \begin{array}{ccc}
            b_1 & b_2 & \dots\\
                d_1 & d_2 & \dots
            \end{array} \right) = (-1)^{|V_1|} \Phiab\big(\bm{F_G}(\A)\big).\]
\end{proposition}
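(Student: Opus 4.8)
The plan is to mirror the proof of Proposition \ref{PropFormulaNG} step by step, replacing the commutative tools by their noncommutative counterparts built in this section. By Lemma \ref{LemNGInSolNC}, the function $\bm{N_G}$ lies in $\SolNC'$, so the identity $\Phiab \circ \Phiba = \Id_{\SolNC'}$ gives $\bm{N_G} = \Phiab\big(\Phiba(\bm{N_G})\big)$. It therefore suffices to establish that $\Phiba(\bm{N_G}) = (-1)^{|V_1|}\,\bm{F_G}(\A)$; applying $\Phiab$ then yields the claim. I stress that the whole argument is structured so as to never compute $\Phiba(\bm{N_G})$ explicitly, which would be unpleasant because \eqref{EqChgtVarNC} introduces differences of \emph{noncommuting} variables.

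First I would observe that $\Phiba(\bm{N_G})$ is an element of $\SolNC = \WQSym(\A)$, hence by Corollary \ref{corol:FtoFA_Morphism} it equals $\bm{F}(\A)$ for a unique $\bm{F} \in \WQSym$. To identify $\bm{F}$, I would specialize all odd-indexed variables to zero, that is, set $a_{2i+1}=0$ for every $i$. On the one hand, this specialization sends $\bm{F}(\A)$ to $\bm{F}$ evaluated in the even-indexed variables $a_2,a_4,\dots$, and this evaluation is \emph{injective} on $\WQSym$ (it is precisely the linear independence of the $P_u(\A)$ established in the proof of Theorem \ref{ThmSolEqWQSym}). On the other hand, the substitution \eqref{EqChgtVarNC} defining $\Phiba$ collapses cleanly under this specialization: since $a_{2i-1}=0$ one gets $b_i = a_{2i-1}-a_{2i} = -a_{2i}$ and $d_i = a_{2i}-a_{2i+1} = a_{2i}$.

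The remaining computation is then purely combinatorial and insensitive to noncommutativity. In the defining sum of $\bm{N_G}$, each vertex $v \in V$ contributes a single factor $\bd_{r(v)}$, with the order of factors fixed by the labelling $1,\dots,n$ of $V$. Under the specialization above, a vertex in $V_1$ contributes $-a_{2r(v)}$ and a vertex in $V_2$ contributes $a_{2r(v)}$, so the product picks up exactly the global sign $(-1)^{|V_1|}$ while its word order is untouched. Collecting these factors shows that the specialization of $\Phiba(\bm{N_G})$ equals $(-1)^{|V_1|}$ times $\bm{F_G}$ evaluated in $a_2,a_4,\dots$. By the injectivity recalled above, this forces $\bm{F} = (-1)^{|V_1|}\bm{F_G}$, and applying $\Phiab$ completes the proof.

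I expect the only point requiring care—rather than a genuine obstacle—to be checking that noncommutativity causes no trouble. The decisive feature is that the odd-variable specialization makes each difference $b_i$ or $d_i$ collapse to a single monomial \emph{before} any expansion of the noncommuting differences is needed; consequently the substitution acts factor-by-factor and preserves the word order, so that the sign $(-1)^{|V_1|}$ factors out exactly as in the commutative case. Since each of the three noncommutative ingredients—Lemma \ref{LemNGInSolNC}, the isomorphism of Corollary \ref{corol:FtoFA_Morphism}, and the even-variable specialization—has been set up to parallel its commutative analogue, the argument should indeed lift directly.
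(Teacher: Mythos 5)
Your proof is correct and follows exactly the route the paper intends: the paper states that Proposition \ref{PropFormulaNG} ``can also be lifted directly,'' and your argument is precisely that lifting, carried out with the right noncommutative ingredients (Lemma \ref{LemNGInSolNC}, the identity $\Phiab \circ \Phiba = \Id_{\SolNC'}$, the isomorphism of Corollary \ref{corol:FtoFA_Morphism}, and the odd-variable specialization with its injectivity on $\WQSym$). Your observation that the signs factor out as scalars before any expansion of noncommuting differences is needed is exactly the point that makes the commutative proof go through unchanged.
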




\subsection{A large linearly independent explicit family in $(\SolNC')_n$}
\label{subsec:explicit_family}

We shall now construct a large enough family of labelled graphs $\bm{G}$,
such that the corresponding $\bm{N_G}$ are linearly independent.

Consider a set composition $\bm{K}$ of $n$ with an even number of parts 
\[\bm{K}=(K_1,\dots,K_{2\ell}) \text{ with }K_1 \sqcup \dots \sqcup K_{2\ell} 
= \{1,\dots,n\}\]
Then we define $\bm{G_K}$ as follows:
\begin{itemize}
    \item Its vertex set $\{1,\dots,n\}$ is split into two parts:
        \begin{align*}
        V_1&=K_1 \sqcup K_3 \sqcup \dots \sqcup K_{2\ell-1} ;\\
        V_2&=K_2 \sqcup K_4 \sqcup \dots \sqcup K_{2\ell}.
        \end{align*}
    \item Its edge sets are given by
        \begin{align*}
            E_{1,2}= (K_1 \times K_2) \sqcup (K_3 \times K_4) \sqcup \dots \sqcup
                (K_{2\ell-1} \times K_{2\ell}) ;\\
            E_{2,1}= (K_2 \times K_3) \sqcup (K_4 \times K_5) \sqcup \dots \sqcup
                (K_{2\ell-2} \times K_{2\ell-1}).
        \end{align*}
\end{itemize}

\begin{example}{\rm
    If $\bm{K}=\big( \{2,3\},\{1,5\},\{6\},\{4\} \big)$,
    then $\bm{G_K}$ is the graph of figure \ref{fig:example_labelled_graph}.
    }
\end{example}

By Lemma \ref{LemNGInSolNC}, the associated functions $\bm{N_{G_K}}$
belong to $\SolNC'$.
\begin{lemma}
    The functions $\bm{N_{G_K}}$, where $\bm{K}$ runs over set compositions
    of $\{1,\dots,n\}$ (for $n \ge 1$)
    with an even number of parts are linearly independent.
\end{lemma}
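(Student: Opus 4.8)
The plan is to exhibit, for each set composition $\bm{K}=(K_1,\dots,K_{2\ell})$, a distinguished monomial $\mu_{\bm{K}}$ that occurs with coefficient $1$ in $\bm{N_{G_K}}$ and with coefficient $0$ in every other $\bm{N_{G_{K'}}}$; such a triangularity immediately forces linear independence. First I would unwind the order condition for the graph $\bm{G_K}$. Since $E_{1,2}$ joins $K_{2i-1}$ to $K_{2i}$ (a weak inequality) and $E_{2,1}$ joins $K_{2i}$ to $K_{2i+1}$ (a strict one), an admissible $r$ is exactly one satisfying the chain
\[
K_1 \le K_2 < K_3 \le K_4 < \cdots \le K_{2\ell},
\]
where $K_a \le K_b$ (resp. $K_a < K_b$) means $r(u)\le r(w)$ (resp. $r(u)<r(w)$) for all $u\in K_a,\ w\in K_b$. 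Note that the weak inequalities sit inside the pairs $(K_{2j-1},K_{2j})$ while the strict ones separate consecutive pairs.

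Next I would take $\mu_{\bm{K}}$ to be the monomial arising from the minimal admissible assignment, namely $r(v)=j$ for every $v\in K_{2j-1}\sqcup K_{2j}$; this $r$ plainly satisfies the chain, so $\mu_{\bm{K}}$ occurs in $\bm{N_{G_K}}$. The key elementary observation is that reading a monomial $\bd_{r(1)}\cdots\bd_{r(n)}$ off position by position recovers, for every vertex $v$, both its value $r(v)$ and its type ($b$ if $v\in V_1$, $d$ if $v\in V_2$). Hence two distinct admissible assignments give distinct monomials, every coefficient of $\bm{N_{G_K}}$ equals $0$ or $1$, and $\mu_{\bm{K}}$ has coefficient exactly $1$.

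The crux is to show $\mu_{\bm{K}}$ cannot occur in $\bm{N_{G_{K'}}}$ for $\bm{K'}\neq\bm{K}$. Suppose it does, through some admissible assignment on $\bm{G_{K'}}$; matching the monomial forces this assignment to be the very same value function $r$ and forces $V_1'=V_1$, $V_2'=V_2$. Put $S_j=\{v:\ r(v)=j\}=K_{2j-1}\sqcup K_{2j}$, and let $L'_j=K'_{2j-1}\sqcup K'_{2j}$ be the ``super-levels'' of $\bm{K'}$. Because the only strict steps of the chain for $\bm{K'}$ occur between these pairs, every value on $L'_j$ is strictly smaller than every value on $L'_{j+1}$ (here one checks $\max K'_{2j}<\min K'_{2j+1}$ together with $K'_{2j-1}\le K'_{2j}$ and $K'_{2j+1}\le K'_{2j+2}$); so the $L'_j$ cut the vertices into blocks of strictly increasing $r$-value. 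Since the $S_j$ are exactly the level sets of $r$, this forces $\ell'=\ell$ and $L'_j=S_j$ for all $j$. Finally, inside $S_j$ the $b$-type vertices are $K_{2j-1}$ and the $d$-type vertices are $K_{2j}$, while in $\bm{K'}$ they are $K'_{2j-1}$ and $K'_{2j}$; as the types are read off the monomial, $K'_{2j-1}=K_{2j-1}$ and $K'_{2j}=K_{2j}$, i.e. $\bm{K'}=\bm{K}$.

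Consequently each $\mu_{\bm{K}}$ appears with coefficient $1$ in $\bm{N_{G_K}}$ and $0$ in all other $\bm{N_{G_{K'}}}$; extracting the coefficient of $\mu_{\bm{K_0}}$ from any relation $\sum_{\bm{K}}\lambda_{\bm{K}}\bm{N_{G_K}}=0$ yields $\lambda_{\bm{K_0}}=0$, which proves independence. The main obstacle is the super-level argument pinning down $\bm{K'}=\bm{K}$: everything hinges on the alternation of weak and strict inequalities, since it is precisely the strict steps that detect the grouping of the blocks into the pairs $(K_{2j-1},K_{2j})$, and the types recorded by the monomial then split each pair correctly.
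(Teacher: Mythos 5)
Your strategy is sound and in fact close to the paper's own proof: both arguments single out the same distinguished monomial $\mu_{\bm{K}}$ (letters $b_j$ at positions $K_{2j-1}$, letters $d_j$ at positions $K_{2j}$). The paper proves a \emph{triangularity} statement -- $\mu_{\bm{K}}$ is the monomial of lexicographically largest evaluation in $\bm{N_{G_K}}$, and it determines $\bm{K}$ -- whereas you aim for the stronger \emph{diagonal} statement that $\mu_{\bm{K}}$ occurs in no other $\bm{N_{G_{K'}}}$ at all. That stronger statement is true, but your justification of it has a genuine gap, and it sits exactly at the step you call the crux. After establishing that every $r$-value on $L'_j$ is strictly smaller than every $r$-value on $L'_{j+1}$, you write that ``this forces $\ell'=\ell$ and $L'_j=S_j$.'' This inference is a non sequitur: strict separation across the blocks $L'_j$ only shows that each $L'_j$ is a union of \emph{consecutive} level sets of $r$, i.e.\ that the partition $(L'_j)$ is a possibly strict coarsening of $(S_j)$, whence only $\ell'\le \ell$. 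Nothing you have said rules out, say, $\ell'=1$ with $L'_1$ the whole vertex set: the ``strictly increasing across blocks'' condition is then vacuous, yet $(L'_j)$ is far from recovering the level sets.

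To close the gap you must invoke the \emph{weak} inequalities inside the pairs of $\bm{K'}$, which so far you used only to derive the strict separation. Suppose some $L'_j$ contains two consecutive level sets $S_i$ and $S_{i+1}$. Since the types match ($V'_1=V_1$, $V'_2=V_2$), one has $K'_{2j-1}=L'_j\cap V_1\supseteq K_{2i+1}$ and $K'_{2j}=L'_j\cap V_2\supseteq K_{2i}$, and both $K_{2i+1}$ and $K_{2i}$ are nonempty because blocks of a set composition are nonempty. The admissibility constraint $K'_{2j-1}\le K'_{2j}$ then gives $r(u)\le r(w)$ for $u\in K_{2i+1}$ and $w\in K_{2i}$, i.e.\ $i+1\le i$, a contradiction. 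Hence no $L'_j$ contains two level sets, so $\ell'=\ell$ and $L'_j=S_j$, and the remainder of your argument (splitting each $S_j$ by type into $K_{2j-1}$ and $K_{2j}$) goes through. With these few lines added your proof is complete; note that the paper's lex-maximality argument sidesteps this analysis entirely, since it never has to determine which other $\bm{N_{G_{K'}}}$ contain $\mu_{\bm{K}}$ -- only that any monomial of $\bm{N_{G_{K'}}}$ has evaluation lexicographically at most that of $\mu_{\bm{K'}}$.
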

\begin{proof}
    With a noncommutative monomial (a word) in $b_i$ and $d_i$,
    we can associate its {\em evaluation}, which we define as
    the integer sequence
    \[(\text{number of }b_1,\text{number of }d_1, \text{number of }b_2,\dots).\]
    It is immediate to see that the monomial in $\bm{N_{G_K}}$
    with the lexicographically largest evaluation is obtained as follows:
    it has letters $b_1$ in positions given by $K_1$, letters $d_1$
    in position given by $K_2$, letters $b_2$ in positions given by $K_3$,\dots
    It follows that the set-composition $\bm{K}$ can be recovered from
    the monomial of lexicographically largest evaluation in $\bm{N_{G_K}}$,
    which implies the linear independence of the $\bm{N_{G_K}}$.
\end{proof}

\begin{corollary}
    \label{cor:DimImBig}
    The dimension of $(\SolNC')_n$, that is of the image of $\Phiab$ in degree $n$
    is at least the number of set-compositions of $\{1,\dots,n\}$ with an even
    number of parts.
\end{corollary}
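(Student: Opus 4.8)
The plan is to combine the two lemmas immediately preceding the statement; once they are in place the corollary is purely a counting observation and nothing further needs to be computed. First I would check that each graph $\bm{G_K}$ attached to a set composition $\bm{K}=(K_1,\dots,K_{2\ell})$ with an \emph{even} number of parts satisfies the hypothesis of Lemma~\ref{LemNGInSolNC}, namely that every vertex is the extremity of at least one edge of $E_{1,2}$. By construction $E_{1,2}$ pairs the parts as $(K_1,K_2),(K_3,K_4),\dots,(K_{2\ell-1},K_{2\ell})$, so each $K_{2i-1}\subseteq V_1$ is joined to the non-empty part $K_{2i}$ and each $K_{2i}\subseteq V_2$ receives edges from the non-empty part $K_{2i-1}$. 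Since the $2\ell$ parts partition $\{1,\dots,n\}$ and this pairing is complete, every vertex is covered. This is precisely where the even-number-of-parts hypothesis is used: an odd final part would lie in $V_1$ with no partner in $E_{1,2}$, and the argument would fail. Hence $\bm{N_{G_K}}\in\SolNC'$, and being homogeneous of degree $n=|V|$ (every monomial $\bd_{r(1)}\cdots\bd_{r(n)}$ has length $n$) it lies in $(\SolNC')_n$.

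Next I would invoke the Lemma just proved, which establishes that the family $(\bm{N_{G_K}})$, as $\bm{K}$ ranges over set compositions of $\{1,\dots,n\}$ with an even number of parts, is linearly independent. Combined with the previous paragraph, this exhibits inside $(\SolNC')_n$ a linearly independent family whose cardinality equals the number of such set compositions, yielding the announced lower bound. Finally I would use the surjectivity of $\Phiab$ recorded in the Proposition opening this section, together with the fact that $\Phiab$ preserves degree, to identify $(\SolNC')_n$ with the image of $\Phiab$ in degree $n$, so that the inequality reads exactly as stated.

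I do not expect any genuine obstacle here: all the substance has already been transferred to the two preceding lemmas, and the only step requiring a moment of thought is the verification of the hypothesis of Lemma~\ref{LemNGInSolNC} for the specific graphs $\bm{G_K}$, which I would spell out as above. It is worth flagging the payoff that motivates the statement: since $\dim(\SolNC)_n=OB(n)$ and the set compositions of $\{1,\dots,n\}$ split into those with an odd and those with an even number of parts, the present lower bound together with the lower bound of Corollary~\ref{cor:DimKerBig} on $\Ker(\Phiab)$ must both be equalities, thereby pinning down $\dim(\SolNC')_n$ and $\dim\Ker(\Phiab)_n$ exactly.
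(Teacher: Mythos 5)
Your proof is correct and follows essentially the same route as the paper: the corollary is an immediate consequence of Lemma~\ref{LemNGInSolNC} applied to the graphs $\bm{G_K}$ together with the linear independence of the $\bm{N_{G_K}}$. Your explicit verification that every vertex of $\bm{G_K}$ meets an edge of $E_{1,2}$ (and that this is exactly where the even-number-of-parts hypothesis enters) is left implicit in the paper, so spelling it out is a welcome addition rather than a deviation.
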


\begin{remark}\label{rmq:Luoto}{\rm
    The corresponding family $\bm{F_{G_K}}$ is a natural noncommutative lifting
    of the basis introduced by K. Luoto in \cite{Luoto}
    (here, we only lift elements indexed by even-length composition,
    but it would not be hard to lift all elements).

    An interesting feature is that the above argument together with the fact that
    $\bm{N_{G_K}} = (-1)^{|V_1|} \Phiab\big(\bm{F_{G_K}}\big) $ implies
    that the $\bm{F_{G_K}}$ are linearly independent,
    which would have been difficult to prove directly.
}
\end{remark}

\subsection{Conclusion}
As the dimension of $(\SolNC)_n \simeq \WQSym_n$ is the number of set-compositions
of $[n]$, Corollaries~\ref{cor:DimKerBig} and \ref{cor:DimImBig} imply:
\begin{theorem}\label{thm:Ker_NC}
        {\rm (i)} The kernel of $\Phiab$ is $\bigoplus_{n\geq 1} \KK_n$
            and its degree $n$ component has a dimension equal
            to the number of set-compositions of $\{1,\dots,n\}$ 
            in an odd number of parts.
            Besides, it is the smallest homogeneous two-sided ideal containing $P_1$ and 
            whose homogeneous components are stable by the action of the symmetric groups.

        {\rm (ii)} The dimension of the degree $n$ component of the image of $\Phiab$,
            that is of $(\SolNC')_n$,
            is exactly the number of set-compositions of $\{1,\dots,n\}$
            in an even number of parts.
\end{theorem}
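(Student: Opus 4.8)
The plan is to reduce the whole statement to a single dimension count, since the substantial work is already contained in the two lower bounds of Corollaries~\ref{cor:DimKerBig} and~\ref{cor:DimImBig}. The substitutions~\eqref{EqChgtVarNC} are linear and hence degree-preserving, so $\Phiab$ restricts to a linear map $(\SolNC)_n \to (\SolNC')_n$ and I may argue degree by degree. As $\Phiab$ is surjective onto $\SolNC'$ (we have $\Phiab \circ \Phiba = \Id_{\SolNC'}$), rank--nullity in degree $n$ gives $\dim(\SolNC)_n = \dim \Ker(\Phiab)_n + \dim(\SolNC')_n$. Now $(\SolNC)_n \simeq \WQSym_n$ has dimension $OB(n)$, the total number of set-compositions of $\{1,\dots,n\}$, which splits as the number with an odd number of parts plus the number with an even number of parts. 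Corollary~\ref{cor:DimKerBig} bounds $\dim \Ker(\Phiab)_n$ from below by the odd count, and Corollary~\ref{cor:DimImBig} bounds $\dim(\SolNC')_n$ from below by the even count; since these two lower bounds already sum to $OB(n)$ while the actual dimensions also sum to $OB(n)$, both inequalities must be equalities. This proves part~(ii) in full and the dimension assertion of part~(i).

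To identify the kernel with $\bigoplus_{n\ge1}\KK_n$, I would combine two facts recalled earlier: the inclusion $\KK_n \subseteq \Ker(\Phiab)_n$ noted when $\KK_n$ was introduced, and the Proposition identifying $\dimKK_n$ with the number of set-compositions of $\{1,\dots,n\}$ with an odd number of parts. The previous paragraph shows $\dim \Ker(\Phiab)_n = \dimKK_n$, and an inclusion of finite-dimensional spaces of equal dimension is an equality, so $\KK_n = \Ker(\Phiab)_n$ for every $n$ and therefore $\Ker(\Phiab) = \bigoplus_{n\ge1}\KK_n$.

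For the minimality statement I would argue by a sandwich. First, $\Ker(\Phiab)$ is itself a homogeneous two-sided ideal (being the kernel of an algebra morphism), it contains $P_1$ because $P_1(\A)=\sum_i (-1)^i a_i$ lies in the kernel, and its homogeneous components are stable under the $\SG_n$-action; hence it is one of the ideals among which minimality is claimed. Second, let $\mathcal{I}$ be the intersection of all homogeneous two-sided ideals containing $P_1$ whose components are $\SG_n$-stable; this intersection again has all three properties, so it is the smallest such ideal. Being two-sided and containing $P_1$, $\mathcal{I}$ contains the left ideal generated by $P_1$; being $\SG_n$-stable, it contains the $\SG_n$-span of that left ideal in each degree, which is exactly $\KK_n$ by definition. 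Thus $\bigoplus_{n\ge1}\KK_n \subseteq \mathcal{I} \subseteq \Ker(\Phiab)$, and since the two ends coincide by the preceding paragraph, all three are equal, so $\Ker(\Phiab)$ is indeed the smallest such ideal. I do not anticipate a genuine obstacle here: the heavy combinatorics has been discharged into the two corollaries, and what remains is the rank--nullity bookkeeping together with verifying that $\Ker(\Phiab)$ legitimately qualifies as one of the ideals in the minimality sandwich.
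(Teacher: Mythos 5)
Your proposal is correct and is essentially the paper's own argument: the paper proves the theorem in one sentence by exactly this dimension count, observing that $\dim(\SolNC)_n = OB(n)$ splits as the odd plus even set-composition counts, so the lower bounds of Corollaries~\ref{cor:DimKerBig} and~\ref{cor:DimImBig} must both be equalities by rank--nullity. Your additional details --- identifying the kernel with $\bigoplus_n \KK_n$ via equal dimensions, and the sandwich $\KK_n \subseteq \mathcal{I}_n \subseteq \Ker(\Phiab)_n$ for minimality --- are precisely what the paper leaves implicit (it set up these inclusions when defining $\KK_n$), so the two proofs coincide.
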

\section{Quasi-symmetric functions on Young diagrams in terms of other sets of coordinates}
\label{SectFuture}

In this section, we discuss some properties of our algebra $Q\Lambda$ of functions
on Young diagrams, seen in terms of other sets of coordinates.
These are mainly open problems and directions for future research.

\subsection{Row coordinates of the Young diagram}
The most natural way to describe a Young diagram is by its row coordinates, that is
the parts of the corresponding partition $\lambda_1,\lambda_2,\dots$
It is shown in \cite[top of page 9]{IO02} that the algebra
of symmetric functions on Young diagrams is the algebra of 
{\em shifted symmetric functions}\footnote{The shifted symmetric function algebra 
is a deformation of the symmetric function algebra, where the symmetry in $\lambda_1,\lambda_2,\dots$
is replaced by a symmetry in $\lambda_1-1,\lambda_2-2,\dots$.
This algebra has been intensively studied in \cite{OO98} and
displays surprisingly nice properties.}
in the row coordinates $\lambda_1,\lambda_2,\dots$

Looking at Figure \ref{FigRussian}, we easily see that 
our virtual alphabet $\X$ can be written as
\begin{equation}
    \X = \ominus (\lambda_1) \oplus (\lambda_1-1) \ominus (\lambda_2-1)
\oplus (\lambda_2-2) \ominus (\lambda_3-2) \oplus \cdots.
\label{eq:X_RowCoordinates}
\end{equation}
Indeed, after removing consecutive terms with equal values and opposite signs
on the right-hand side, we are left with the definition of $\X$.

Therefore, quasi-symmetric functions evaluated in the right hand side of \eqref{eq:X_RowCoordinates}
is a natural extension of shifted symmetric functions (at least from the point of view
of functions on Young diagrams).
It would be interested to investigate whether some properties of \cite{OO98}
can be extended to this algebra.

\subsection{Frobenius coordinates}
(Modified) Frobenius coordinates of a Young diagram $\lambda$ are defined as follows:
let $d$ be the biggest integer such that $\lambda_d \ge d$, then set
(for $1 \le i \le d$)
\begin{align*}
    a_i &= \lambda_i -i +1/2 \\
    b_i &=\lambda'_i -i +1/2,
\end{align*}
where $\lambda'$ denotes, as usual, the conjugate of $\lambda$.
Ivanov and Olshanski have shown \cite[page 8]{IO02} that 
the algebra $\Lambda$ of symmetric functions on Young diagrams
correspond to super-symmetric functions in Frobenius coordinates,
that is symmetric functions in the virtual alphabet\footnote{Unlike in \eqref{EqVirtualAlphabet},
we use usual $+$ and $-$ signs for addition and soustraction of 
virtual alphabets for symmetric functions,
as these operators commute in this context.}:
\[(a_1) - (-b_1) + (a_2) - (-b_2) + \cdots\]

As for row coordinates, the virtual alphabet $\X$ is easily expressible
in terms of Frobenius coordinates
\begin{multline}
    \X = \ominus (a_1+1/2) \oplus (a_1 -1/2) \ominus (a_2 +1/2) \oplus (a_2 -1/2) \ominus \dots \\
    \dots \oplus (b_2 -1/2) \ominus (b_2 +1/2) \oplus (b_1 -1/2) \ominus (b_1+1/2).
\label{eq:X_Frobenius}
\end{multline}
Thus quasi-symmetric functions in terms of this alphabet
gives a quasi-symmetric analog of super-symmetric function,
which is natural from a ``functions on Young diagrams'' point of view.
This could be interested to investigate.

\subsection{Contents}
The multiset of contents of a Young diagram $\lambda$ is defined as the multiset
$\{j-i: (i,j) \in \lambda\}$.
The algebra $\Lambda$ of symmetric functions on Young diagrams
correspond to symmetric functions in the multiset of contents with coefficients in $\C[|\lambda|]$
({\em i.e.} coefficients may depend polynomially on the size $|\la|$ of the partition),
see \cite{CGS04} or \cite[Remark 1]{Fer12}.

Unfortunately, we have not found a way to order the set of contents and express
$\X$ in terms of contents with a formula similar to Equations~\eqref{eq:X_RowCoordinates}
and \eqref{eq:X_Frobenius}.
Thus, we leave open the following question:
is there a nice description of our algebra $Q\Lambda$
in terms of the multiset of contents?

\section{Appendix}

The definitions of the virtual alphabet $\X$ and of the functions
$\H_I$ can be made more transparent if one introduces the formalism
of noncommutative symmetric functions \cite{NCSF1}.

For a totally ordered alphabet $A=\{a_i|i\ge 1\}$ of noncommuting
variables, and $t$ an indeterminate, one sets
\begin{equation}
\sigma_t(A)=\prod_{i\ge 1}^\rightarrow (1-ta_i)^{-1} =\sum_{n\ge 0} S_n(A)t^n\,.
\end{equation}
The complete symmetric functions $S_n(A)$ generate a free associative algebra
$\Sym(A)$ (noncommutative symmetric functions). One denotes by
\begin{equation}
S^I(A):= S_{i_1}S_{i_2}\cdots S_{i_r}
\end{equation}
its natural basis. Actually, $\Sym(A)$ is a Hopf algebra, and its graded
dual is $\QSym$. This can be deduced from the noncommutative Cauchy
formula
\begin{equation}
\sigma_1(XA):= \prod_{i\ge 1}^\rightarrow \sigma_{x_i}(A) = \sum_I M_I(X)S^I(A)
\end{equation}
which allows to identify $M_I$ with the dual basis of $S^I$ \cite{NCSF1,MR}.

Now, the virtual alphabet $\X$ can be defined by
\begin{equation}
\sigma_1(\X A) = \sum_I M_I(\X)S^I(A) = \prod_{i\ge 1}^\rightarrow \sigma_{x_i}(A)^{(-1)^i}\,.
\end{equation}
It is clear that the right-hand side (seen as a polynomial in infinitely many variables
and coefficients in $\Sym(A)$) is a solution of~\eqref{eqfoncQS}.
Hence all the coefficients in its $S^I$ expansion, that is the $M_I(\X)$,
are also solution of~\eqref{eqfoncQS}.

The set of polynomials in $p_1,p_2,\dots,q_1,q_2,\dots$,
that are solutions of equations~\eqref{EqQZero} and \eqref{EqPZero}
can also been described with the language of virtual alphabets.

Consider the virtual alphabet $\Y$ defined by
\begin{equation}\label{eq:serpq}
\sigma_1(\Y A)=\prod_{i\ge 1}^\rightarrow \sigma_{q'_i}(A)^{\frac{p_i}{q'_i}}\,.
\end{equation}
Then we have:
\begin{theorem}
    A linear basis for the space $\Sol'$ of 
    solutions of~\eqref{EqQZero} and \eqref{EqPZero}
    is given by the function $M_I(\Y)$, where $I$ runs over compositions with
    the last part bigger than $1$.
    \label{Thm:Solpq_Via_LambdaRing}
\end{theorem}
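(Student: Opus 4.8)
The plan is to extract every claim directly from the generating-series definition \eqref{eq:serpq}. By the noncommutative Cauchy formula, $M_I(\Y)$ is the coefficient of $S^I(A)$ in the ordered product $\sigma_1(\Y A)=\prod_{i\ge1}^\rightarrow \sigma_{q'_i}(A)^{p_i/q'_i}$, in which the $i$-th factor, read from the left, carries exactly the variables $p_i,q'_i$ of the $i$-th column. The key observation is that the two defining relations of $\Sol'$ correspond to elementary operations on this product. I would first prove membership $M_I(\Y)\in\Sol'$ for $I$ with last part $>1$, then linear independence, and conclude by the dimension count: there are $2^{n-2}$ compositions of $n$ with last part $>1$, matching $\dim(\Sol')_n=2^{n-2}$ obtained from Corollaries~\ref{CorolDimQLa} and~\ref{CorolSolpEqQLa}.

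For membership, setting $p_i=0$ turns the $i$-th factor into $\sigma_{q'_i}(A)^0=1$, so that $\sigma_1(\Y A)$ restricts to the analogous product for the column-$i$-deleted data; reading off coefficients gives \eqref{EqPZeroBis}, valid for every composition. Setting $q_i=0$ with $i<m$ forces $q'_i=q'_{i+1}$, so the $i$-th and $(i{+}1)$-th factors share a common base; since powers of a fixed invertible series add their exponents, these two adjacent factors merge into $\sigma_{q'_{i+1}}(A)^{(p_i+p_{i+1})/q'_{i+1}}$, which is precisely the product attached to the right-hand data of \eqref{EqQZero}. Again this holds for every composition.

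The delicate point, and the reason for the hypothesis on the last part, is \eqref{EqQZero} at the boundary $i=m$, where $q'_m=q_m\to0$ but the last factor does not disappear. Writing $\sigma_{q'_m}(A)^{p_m/q'_m}=\exp\!\big(\tfrac{p_m}{q'_m}\log\sigma_{q'_m}(A)\big)$ and using $\log\sigma_t(A)=t\,S_1(A)+O(t^2)$, this factor tends to $\exp\!\big(p_m S_1(A)\big)=\sum_{j\ge0}\tfrac{p_m^j}{j!}\,S_1(A)^j$ as $q'_m\to0$. Since it is multiplied on the right and $S_1(A)^j=S^{(1^j)}(A)$, the coefficient of $S^I$ receives a contribution from $j\ge1$ only when $I$ ends in a block of parts equal to $1$. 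Thus, exactly when the last part of $I$ is $>1$, only $j=0$ survives and $M_I(\Y)|_{q_m=0}=M_I(\Y')$, which is \eqref{EqQZero} with the last column erased. I expect this boundary analysis to be the main obstacle, and it simultaneously explains why the basis is indexed by compositions with last part $>1$.

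Linear independence is then cheap, via the collapsing morphism $p_j,q'_j\mapsto y_j$ of Section~\ref{SubSectConfusingAlphabets}. Under it each factor becomes $\sigma_{y_i}(A)^{y_i/y_i}=\sigma_{y_i}(A)$, so $\sigma_1(\Y A)\mapsto\prod_{i}^\rightarrow\sigma_{y_i}(A)=\sigma_1(YA)=\sum_I M_I(Y)\,S^I(A)$, whence $M_I(\Y)\mapsto M_I(Y)$. Any nontrivial linear relation among the $M_I(\Y)$ would therefore collapse to one among distinct monomial quasi-symmetric functions $M_I(Y)$, which is impossible. Hence the $M_I(\Y)$ with last part $>1$ are linearly independent; lying in $\Sol'$ and numbering $2^{n-2}$ in each degree $n$, they form a basis.
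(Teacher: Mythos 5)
Your proposal is correct and follows essentially the same route as the paper: membership of $M_I(\Y)$ in $\Sol'$ is read off the generating series \eqref{eq:serpq}, with the only delicate point being \eqref{EqQZero} at $i=m$, where both you and the paper exploit that the residual factor $\exp\big(p_m S_1(A)\big)$ involves only $S_1$ and hence only affects coefficients of $S^I$ for compositions $I$ ending in parts equal to $1$; linear independence then follows from the collapse $p_i=q'_i=y_i$ sending $M_I(\Y)$ to $M_I(Y)$, and a dimension count ($2^{n-2}$ in degree $n$, via Corollaries~\ref{CorolDimQLa} and~\ref{CorolSolpEqQLa}) finishes the argument. The sole cosmetic difference is that the paper packages the boundary analysis into the corrected series $\sigma_1(\Y A)e^{-\left(\sum_i p_i\right) S_1}$ defining auxiliary functions $\tilde{M}_I$ and identifies $\tilde{M}_I=M_I(\Y)$ when the last part exceeds $1$, whereas you extract the coefficient of $S^I$ directly—the underlying idea is identical.
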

\begin{proof}
    On the series \eqref{eq:serpq}, it is immediate that 
    quasi-symmetric functions in $\Y$ satisfy conditions~\eqref{EqQZero} and
    \eqref{EqPZero}, except equation~\eqref{EqQZero} for $i=m$.
    Indeed, when we restrict to $m$ variables and set $q_m=0$,
    then $q'_m=0$ and
    \[\sigma_{q'_m}(A)^{\frac{p_m}{q'_m}} = \exp(p_m S_1(A)),\]
    and the variable $p_m$ does not disappear.

    This can be corrected as follows: define $\tilde{M}_I$ by
\begin{equation}
    \label{Eq:Def_MTilde}
    \sum_I \tilde{M}_I S^I := \sigma_1(\Y A)e^{-\left(\sum_i p_i\right) S_1}\,.
\end{equation}
Then $\tilde{M}_I$ satisfies equations~\eqref{EqQZero} and \eqref{EqPZero}.
But the second factor in the right-hand side of~\eqref{Eq:Def_MTilde}
contains only $S_1$, thus, if the last part of a composition $I$ is bigger than $1$,
we can forget this factor when extracting the coefficient of $S^I$.
In other terms, for such a composition $I$
\[\tilde{M}_I = M_I(\Y). \]
This shows that the $M_I(\Y)$ are indeed solutions of~\eqref{EqQZero} and \eqref{EqPZero}.
If we substitute $p_i=q'_i=y_i$, we recover $M_I(Y)$, and hence they are linearly
independent.
A dimension argument finishes the proof.
\end{proof}

This implies that $\H_I=F(\Y)$ for some quasi-symmetric function $F$
and we shall now identify $F$.

Let $H_I$ be the function defined in equation~\eqref{HIX}.
Alternatively, let
$\phi_n$ be the noncommutative symmetric functions defined by
\begin{equation}
\log \sigma_t(A) =\sum_{n\ge 1}\phi_n(A)t^n,
\end{equation}
then $H_I\in \QSym$ is the dual basis of the basis $\phi^I$ of $\Sym$.

\begin{proposition}   
$\H_I= H_I(\Y)$.
\end{proposition}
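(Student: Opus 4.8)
The plan is to identify $H_I$ directly as the coefficient of the noncommutative monomial $\phi^I$ in the generating series $\sigma_1(\Y A)$, and then to compute that coefficient explicitly, checking that it coincides with the right-hand side of \eqref{EqDefHI}. The starting observation is that the Cauchy element $\sum_I M_I\otimes S^I$ does not depend on the choice of dual bases for the pairing between $\QSym$ and $\Sym$. Since, by definition, $H_I$ is dual to $\phi^I$, this yields the two expansions
\[
\sigma_1(\Y A) = \sum_I M_I(\Y)\, S^I(A) = \sum_I H_I(\Y)\, \phi^I(A),
\]
the first equality being exactly how $\Y$ is defined in \eqref{eq:serpq} (in the same way that the $M_I(\Y)$ are read off as the $S^I$-coefficients). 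In particular, $H_I(\Y)$ is precisely the coefficient of $\phi^I(A)$ in $\sigma_1(\Y A)$, and it remains to extract this coefficient.

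Next I would make the $\phi$-basis visible in the product \eqref{eq:serpq}. Using $\log\sigma_t(A)=\sum_{n\ge 1}\phi_n(A)\,t^n$, each factor becomes
\[
\sigma_{q'_i}(A)^{p_i/q'_i}
= \exp\Bigl(\frac{p_i}{q'_i}\sum_{n\ge 1}\phi_n(A)\,(q'_i)^n\Bigr)
= \exp\Bigl(\sum_{n\ge 1}\phi_n(A)\,p_i\,(q'_i)^{n-1}\Bigr),
\]
so that $\sigma_1(\Y A)=\prod_{i\ge 1}^{\rightarrow}\exp\bigl(\sum_{n\ge 1}\phi_n(A)\,p_i(q'_i)^{n-1}\bigr)$. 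The point is that in each single factor the exponent is a scalar combination of the $\phi_n$, namely $\sum_n c_n\phi_n$ with $c_n=p_i(q'_i)^{n-1}$; expanding the exponential, the coefficient of a fixed length-$\ell$ subword $\phi_{j_1}\cdots\phi_{j_\ell}$ contributed by the factor of index $k$ is $\frac{1}{\ell!}\prod_{u}p_k(q'_k)^{j_u-1}=\frac{1}{\ell!}\,p_k^{\ell}(q'_k)^{|I_t|-\ell}$, where $I_t=(j_1,\dots,j_\ell)$ and $|I_t|$ is the sum of its parts.

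Finally I would assemble the coefficient of $\phi^I=\phi_{i_1}\cdots\phi_{i_r}$ in the whole ordered product. Because the factors are multiplied left to right in increasing order of index, a word $\phi^I$ arises exactly by cutting $I$ into consecutive non-empty blocks $I=I_1\cdots I_s$ and assigning block $I_t$ to a factor $k_t$ with $k_1<\dots<k_s$, the skipped factors contributing the constant term $1$ of the exponential. Multiplying the per-factor contributions computed above reproduces verbatim the sum in \eqref{EqDefHI}, giving $H_I(\Y)=\H_I$. The only delicate step is the first one: one must justify that evaluation of quasi-symmetric functions on $\Y$ is the linear functional dual to extracting $\phi^I$-coefficients, i.e.\ the basis-independence of the Cauchy kernel; once this is granted, the remainder is a routine expansion, the key combinatorial remark being that the noncommutativity of the $\phi_n$ within a single factor is harmless since there the exponent is a single scalar-valued element $\sum_n\phi_n(q'_i)^{n-1}$ up to the factor $p_i/q'_i$.
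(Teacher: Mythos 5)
Your proof is correct, but it follows a genuinely different route from the paper's. The paper's own argument is indirect and very short given its earlier results: since $\H_I$ was already shown to lie in $\Sol'$, and since Theorem \ref{Thm:Solpq_Via_LambdaRing} shows the $M_I(\Y)$ form a basis of $\Sol'$, one knows $\H_I=F(\Y)$ for \emph{some} $F\in\QSym$; the paper then pins down $F$ by the collapsing specialization $p_i=q'_i=y_i$, which sends $F(\Y)$ to $F(Y)$ and $\H_I$ to $H_I(Y)$ by \eqref{HIX}, forcing $F=H_I$. You instead compute head-on: taking the dual-basis characterization of $H_I$ (dual to $\phi^I$), you invoke basis-independence of the Cauchy kernel to write $\sigma_1(\Y A)=\sum_I H_I(\Y)\,\phi^I(A)$, rewrite each factor of \eqref{eq:serpq} as $\exp\bigl(\sum_{n\ge 1}\phi_n(A)\,p_i(q'_i)^{n-1}\bigr)$, and extract the coefficient of $\phi^I$ from the ordered product, recovering \eqref{EqDefHI} exactly. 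Your computation is sound: the $\phi^I$ are a basis of $\Sym$, the per-factor coefficient $\frac{1}{\ell!}p_k^{\ell}(q'_k)^{|I_t|-\ell}$ is right, and the splitting of $I$ into consecutive blocks assigned to increasing factor indices is precisely what the ordered product produces. What each approach buys: the paper's proof is two lines but leans on the whole preceding apparatus ($\H_I\in\Sol'$, the structure theorem for $\Sol'$, and the collapsing morphism); yours is self-contained, needs none of that machinery, works for every composition $I$ (not only those with last part greater than $1$), and explains conceptually why the concatenation-factorization formula \eqref{EqDefHI} arises. The one external input you use --- that $H_I$ as defined by \eqref{HIX} is dual to $\phi^I$ --- is stated in the paper as an alternative definition (citing \cite{MR}); note there is no circularity, since that equivalence is itself exactly your computation specialized at $p_i=q'_i=y_i$, so you could derive it as a byproduct rather than assume it.
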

\begin{proof}
    After substitution $p_i=q'_i=y_i$, $F(\Y)$ is sent to $F(Y)$
    and $\H_I$ to $H_I(Y)$ (see Section~\ref{SubSectConfusingAlphabets}).
    This yields $F=H_I$.
\end{proof}

\begin{remark}{\rm
    Equation \eqref{Eq:Def_MTilde} is
    an avatar of the $(1-{\mathbb E})$-transform investigated
in \cite{HLNT}, another example being the so-called quasi-shuffle
regularization of Multiple Zeta Values (see, {\it e.g.}, \cite{Cartier}).
}
\end{remark}


\end{document}